\newtheorem{theorem}{Theorem}
\newtheorem{lemma}{Lemma}
\newcommand\fro[1]{\| #1 \|_{\rm{F}}}
\newcommand\op[1]{\| #1 \|}
\newcommand{\mat}[1]{\begin{bmatrix}#1 \\ \end{bmatrix}}
\newcommand{\inp}[2]{\langle #1,#2\rangle}
\newcommand\ltwo[1]{\| #1 \|_{\ell_2}}
\newcommand\psitwo[1]{\| #1 \|_{\psi_2}}
\newcommand\lzero[1]{\| #1 \|_{\ell_0}}
\let\originalparagraph\paragraph
\renewcommand{\paragraph}[2][.]{\originalparagraph{\textit{#2#1}}}
\def\calC{{\mathcal C}}
\def\calD{{\mathcal D}}
\def\calE{{\mathcal E}}
\def\calF{{\mathcal F}}
\def\calI{{\mathcal I}}
\def\calJ{{\mathcal J}}
\def\calL{{\mathcal L}}
\def\calM{{\mathcal M}}
\def\calQ{{\mathcal Q}}
\def\calS{{\mathcal S}}
\def\calT{{\mathcal T}}
\def\calU{{\mathcal U}}
\newcommand{\mcM}{\mathcal{M}}
\newcommand{\mcU}{\mathcal{U}}
\newcommand{\mcW}{\mathcal{W}}
\def\EE{{\mathbb E}}
\def\II{{\mathbb I}}
\def\OO{{\mathbb O}}
\def\PP{{\mathbb P}}
\def\RR{{\mathbb R}}
\def\SS{{\mathbb S}}
\def\ZZ{{\mathbb Z}}
\def\g{{\boldsymbol g}}
\def\j{{\boldsymbol j}}
\def\n{{\boldsymbol n}}
\def\s{{\boldsymbol s}}
\def\v{{\boldsymbol v}}
\def\w{{\boldsymbol w}}
\def\y{{\boldsymbol y}}
\def\A{A}
\def\B{B}
\def\D{D}
\def\E{E}
\def\G{G}
\def\H{H}
\def\I{I}
\def\M{M}
\def\U{U}
\def\V{V}
\def\Z{Z}
\def\svd{\textsf{svd}}
\def\vec{\textsf{vec}}
\def\diag{\textsf{diag}}
\def\tr{\textsf{tr}}
\def\bLa{\Lambda}
\def\bDel{\Delta}
\def\bSigma{\Sigma}
\def\bXi{\Xi}
\def\bTheta{\Theta}
\def\beps{\boldsymbol{\varepsilon}}
\def\bdelta{\boldsymbol{\delta}}
\def\eps{\varepsilon}
\def\hat{\widehat}
\def\tilde{\widetilde}
\def\epsilon{\eps}
\def\whU{\widehat{U}}
\def\whSig{\widehat{\Sigma}}
\def\alg{\textsf{Fed-DP-PCA}}
\begin{document}
 
\title{Federated PCA and Estimation for Spiked Covariance Matrices: Optimal Rates and Efficient Algorithm}
\author{Jingyang Li$^1$, T. Tony Cai$^2$, Dong Xia$^3$ and Anru R. Zhang$^4$\\
$^1$ Department of Statistics and Department of Mathematics, University of Michigan Ann Arbor\\
$^2$ Department of Statistics and Data Science, University of Pennsylvania\\
$^3$ Department of Mathematics, Hong Kong University of Science and Technology\\
$^4$ Department of Biostatistics \& Bioinformatics and Department of Computer Science, Duke University}
\date{(\today)}

\maketitle

\begin{abstract}
Federated Learning (FL) has gained significant recent attention in machine learning for its enhanced privacy and data security, making it indispensable in fields such as healthcare, finance, and personalized services. This paper investigates federated PCA and estimation for spiked covariance matrices under distributed differential privacy constraints.

We establish minimax rates of convergence, with a key finding that the central server's optimal rate is the harmonic mean of the local clients' minimax rates. This guarantees consistent estimation at the central server as long as at least one local client provides consistent results. Notably, consistency is maintained even if some local estimators are inconsistent, provided there are enough clients. These findings highlight the robustness and scalability of FL for reliable statistical inference under privacy constraints.

To establish minimax lower bounds, we derive a matrix version of van Trees' inequality, which is of independent interest. Furthermore, we propose an efficient algorithm that preserves differential privacy while achieving near-optimal rates at the central server, up to a logarithmic factor.  We address significant technical challenges in analyzing this algorithm, which involves a three-layer spectral decomposition. Numerical performance of the proposed algorithm is investigated using both simulated and real data.

\end{abstract}

\section{Introduction}



Principal Component Analysis (PCA) and its variants are fundamental tools in statistics and machine learning, particularly valuable for dimension reduction and data visualization when high-dimensional data lie in a low-dimensional space. PCA has been widely applied in data denoising and compression, feature extraction, clustering analysis, factor models, correlation analysis, and regression analysis. Population principal components are typically defined using the leading eigenvectors of population covariance matrices. The estimation and inference of these components from sample data have been extensively studied across various fields, including probability, statistics, and machine learning (see, for example, \cite{vershynin2018high, jolliffe2016principal, silverstein1995empirical, bickel2008covariance, koltchinskii2016asymptotics, benaych2011eigenvalues, cai2013sparse, cai2015optimal, zhang2022heteroskedastic, cai2016minimax}). See \cite{cai2016estimating} for a survey on optimal estimation of high-dimensional covariance structures.

With the digital shift in human activities, such as social networking, online shopping, and healthcare, vast amounts of personal information are collected and analyzed by large information technology firms and governmental organizations. The centralization of data storage raises critical concerns about the misuse of sensitive personal information, whether through intentional abuse or unintentional leaks. Traditional privacy-preserving methods like anonymization have proven insufficient, particularly in the context of classical PCA. As shown by \cite{dwork2006calibrating} and \cite{chaudhuri2013near}, classical PCA is vulnerable to alterations in individual data points and poses a significant risk of personal information leakage.

Differential privacy (DP) provides a robust framework to ensure that published statistics do not reveal whether any individual's data was included in the dataset. Initially introduced by \citet{dwork2006calibrating}, DP has become a widely accepted standard in both industrial and governmental applications \citep{google_privacy,ding2017collecting,apple2017,abowd2016challenge,abowd2020modernization}. DP is typically achieved by adding random noise to statistical outputs, using mechanisms such as the Gaussian or Laplace mechanisms. However, this randomization can compromise the accuracy of statistical methods. Consequently, a growing body of literature explores the trade-offs between privacy and accuracy in fundamental statistical and machine learning problems. The minimax optimal rates for differentially private PCA and covariance matrix estimation under the spiked model are established in \cite{cai2024optimalPCA}.

Federated Learning (FL) is a decentralized machine learning framework where local clients train their models and communicate with a central server without sharing raw data \citep{mcmahan2017communication}. Instead, clients privatize their learned models and share them with the central server or other clients, enabling collaborative machine learning while maintaining data privacy. Federated learning has applications in healthcare, finance, Internet of Things (IoT), and more. However, the heterogeneity of datasets, privacy constraints, and the increasing number of local clients pose significant challenges to understanding the theoretical performance of federated learning. Under the DP constraint, the special case where each client holds only one datum is referred to as the \emph{local} differential privacy setting \citep{duchi2013local}.

This paper investigates the minimax optimal rates in federated PCA under the spiked covariance model, considering diverse privacy constraints and sample sizes at local clients. A surprising and significant finding is that the minimax optimal rates achieved by the central server are the (scaled) harmonic mean of the minimax optimal rates achieved by local clients. This indicates that federated learning is multiply robust, meaning the central server attains a consistent estimator as long as at least one local client provides a consistent estimator. We believe this phenomenon is general and applies to many other federated learning problems under DP constraints. 

The lower bound is established by leveraging a matrix version of the van Trees' inequality, inspired by a similar strategy in \cite{cai2024optimal}. This matrix version of van Trees' inequality is of independent interest. Additionally, we develop a computationally efficient algorithm that preserves DP at local clients and achieves the minimax optimal rate at the central server (up to logarithmic factors). The final estimator is obtained by applying three layers of spectral decomposition, posing significant technical challenges in deriving the sharp upper bound.

When there is only one local client, federated PCA simplifies to DP-PCA, and the upper bound derived in this study aligns with the results presented in \cite{cai2024optimalPCA}. However, we emphasize that the technical contributions of these two works are fundamentally distinct. The primary contribution of \cite{cai2024optimalPCA} lies in the precise characterization of the sensitivity of empirical spectral projectors and eigenvalues under the spiked covariance model, which serves as the foundation for our methodology and theoretical framework for Federated PCA presented in this paper. Specifically, their results are directly leveraged to determine the appropriate level of artificial noise to be added at each local client. In contrast, the technical challenges in Federated PCA stem from the need to perform a sharp analysis of aggregated PCA across multiple clients. Our proposed method involves not one, but \emph{three} layers of spectral decomposition, and the precise perturbation analysis of the final estimator relies on an explicit characterization of both the stochastic error and the artificial noise introduced in the first and second layers, respectively. Each layer's spectral decomposition is represented by a Neumann series expansion, leading to a composition of three Neumann series. Consequently, we had to develop a unified strategy to derive concentration bounds for numerous higher-order perturbation terms,  which required new techniques beyond those used in single-client DP-PCA. See the proof sketch of Theorem~\ref{thm:highprob:upperbound} for more details on our approach. 
 
\subsection{Problem formulation}

The spiked covariance model has been widely applied and extensively investigated for extracting low-dimensional covariance structure from potentially high-dimensional data. It has found applications in diverse fields such as genomics \citep{leek2007capturing}, wireless communication \citep{telatar1999capacity}, asset pricing \citep{chamberlain1982arbitrage}, econometrics \citep{fan2008high}, and population genetics \citep{patterson2006population,novembre2008interpreting}. Under the spiked model, the covariance matrix $\Sigma$ is a low-rank deformation of the (scaled) identity matrix, which admits the following decomposition:
\begin{align}\label{eq:spiked-model}
\Sigma=U\Lambda U^{\top}+\sigma^2 I_p,
\end{align}
where $U=(u_1,\cdots,u_r)\in\OO^{p\times r}$ and $\Lambda={\rm diag}(\lambda_1,\cdots,\lambda_r)$ are the leading eigenvectors and eigenvalues of the low-rank deformation with $\lambda_1\geq \cdots\geq \lambda_r>0$. Here $I_p$ represents the $p\times p$ identity matrix and $\OO^{p\times r}$ is the set of $p\times r$ matrices satisfying $U^{\top}U=I_r$.  
 
Estimating the population covariance matrix and its leading eigenvectors from a random sample $X=(X_1,\cdots,X_n)\in\RR^{p\times n}$, where the column vectors are i.i.d. with ${\rm cov}(X_i)=\Sigma$, is a fundamental problem in multivariate statistical analysis. The spiked structure often significantly facilitates the theoretical derivation of the distribution of the sample eigenvectors and eigenvalues. Minimax optimal PCA and covariance matrix estimation have been extensively studied under the spiked model.  An incomplete list of representative work includes \cite{nadler2008finite, donoho2018optimal, cai2010optimal,  cai2016estimating, koltchinskii2017concentration, johnstone2001distribution, fan2008high,paul2007asymptotics} and references therein. 

Differential privacy \citep{dwork2006calibrating} is a framework designed to provide privacy guarantee when analyzing and sharing data.  Let $X\in\RR^{p\times n}$ be a data set consisting of $n$ observations.  In standard definitions, a matrix $X'\in\RR^{p\times n}$ is called a neighboring data set of $X$ if and only if $X$ and $X'$ differ by only one datum,  i.e.,  one column of $X$ is replaced by some other, possibly arbitrary, observation of the same dimension. In the context of PCA, since the observations in $X$ are independently sampled from the same distribution, a neighboring data set $X'$ is obtained by replacing one datum in $X$ with an independent copy. For a given data set $X$ and any $\eps>0,  \delta\in[0, 1)$,  a randomized algorithm $A$ that maps $X$ into $\RR^{d_1\times d_2}$ is called $(\eps,  \delta)$-differentially private ($(\eps, \delta)$-DP) over the data set $X$ if 
$$
\PP\big(A(X)\in\calQ\big)\leq e^{\eps}\cdot \PP\big(A(X')\in\calQ\big)+\delta,
$$
for all measurable subset $\calQ\subset \RR^{d_1\times d_2}$ and all neighboring data set $X'$.  

Differentially private PCA algorithms have been proposed and investigated in \cite{blum2005practical,chaudhuri2011differentially, dwork2014analyze} by treating each datum $X_i$ as a fixed vector.  More recently,  \cite{liu2022dp} and \cite{cai2024optimalPCA} studied the minimax optimal rates for differentially private PCA and covariance estimation under the spiked covariance model (\ref{eq:spiked-model}).  In particular,  \cite{cai2024optimalPCA} showed that the minimax optimal rates,  up to logarithmic factors,  are
\begin{equation}\label{eq:dp-opt-spiked-cov}
	\begin{aligned}
		\inf_{\whU\in\mcU_{\varepsilon,\delta}}\ \sup_{\Sigma\in\Theta (\lambda,\sigma^2)} \ \EE \|\whU\whU^{\top}-UU^{\top}\|_{\rm F}^2& \asymp \Psi_{0}^2(n, \eps, \delta):=\bigg(\frac{\sigma^4}{\lambda^2}+\frac{\sigma^2}{\lambda}\bigg) \left(\frac{pr}{n} +\frac{p^2 r^2}{n^2\epsilon^2}\right);\\
		\inf_{\whSig\in\mcM_{\varepsilon,\delta}}\ \sup_{\Sigma\in\Theta (\lambda,\sigma^2)}\ \EE \|\whSig-\Sigma\|_{\rm F}^2& \asymp  \lambda^2\cdot \Psi_1(n,\eps,\delta)+\lambda^2\cdot \Psi_0(n, \eps, \delta)\\
		&:= \lambda^2 \left(\frac{r^2}{n} + \frac{r^4}{n^2\varepsilon^2}\right) + \sigma^2(\lambda+\sigma^2) \left(\frac{pr}{n} + \frac{p^2r^2}{n^2 \varepsilon^2} \right),
	\end{aligned}
\end{equation} 
conditioned on $\max\{\Psi_0(n,\eps,\delta), \Psi_{1}(n, \eps, \delta)\}\leq \sqrt{r}$ (otherwise,  a trivial estimator suffices) and under certain constraint on $\delta$. The parameter set $\Theta(\lambda,\sigma^2)$ is defined by 
\begin{align*}
	\Theta(\lambda,\sigma^2): = \bigg\{\Sigma = &U\Lambda U^\top + \sigma^2 I: U\in\OO^{p\times r}, \Lambda = \diag(\lambda_1,\cdots, \lambda_r), c_0\lambda\leq \lambda_r\leq\cdots\leq \lambda_1\leq C_0\lambda\bigg\},
\end{align*}
with universal constants $c_0, C_0>0$.   Here $\calU_{\eps,\delta}$ and $\calM_{\eps,\delta}$ represent the collection of all $(\eps,\delta)$-DP estimators of $U$ and $\Sigma$,  respectively.  The terms in (\ref{eq:dp-opt-spiked-cov}) involving $\eps$ reflect the cost of privacy.  The error bound of $\EE\|\whSig-\Sigma\|_{\rm F}^2$ consists of two terms,  where $\lambda^2\cdot \Psi_0^2(n,\eps,\delta)$ and $\lambda^2\cdot \Psi_1^2(n,\eps,\delta)$ are mainly contributed from estimating the eigenvectors and eigenvalues,  respectively.  

We formulate the problem of differentially private federated PCA as follows. There are $m$ local clients,  where the $j$-th client holds data $\calD_j:=\big\{X^{(j)}_i\in\RR^p, i=1,\cdots, n_j\big\}$ for each $j\in [m]$.  Under the spiked model, we assume that $X_i^{(j)}\stackrel{{\rm i.i.d.}}{\sim} N(0, \Sigma)$ for all $ j\in[m]$ and for all $i\in[n_j]$.  Here $n_j$ represents the sample size in the $j$-th local client.   
All the data share a common spiked covariance matrix, and we assume zero mean and Gaussian distribution for simplicity.  There is a central server that can communicate with the local clients,  whose goal is to estimate the underlying covariance matrix $\Sigma$ and its principal components.  Local clients have privacy constraints and cannot share raw data with the central server or other local clients.  Let $\eps_j>0$ and $\delta_j\in[0,1),  j\in[m]$ be two given sequences representing the privacy budgets on all local clients.  Basically,  the $j$-th local client requires to achieve the $(\eps_j,  \delta_j)$-differential privacy when communicating its local information to the central server.  Denote $\beps:=(\eps_1,\cdots,\eps_m)^{\top}$, $\bdelta:=(\delta_1,\cdots,\delta_m)^{\top}$, and $\n=(n_1,\cdots, n_m)^{\top}$.   An estimator is called {\it federated} $(\beps,  \bdelta)$-DP if it is $(\eps_j, \delta_j)$-DP for data in the $j$-th local client for all $j\in[m]$.  In this paper,  we aim to investigate the minimax optimal rates for federated $(\beps, \bdelta)$-DP PCA and covariance matrix estimation under the spiked model.  We also propose computationally and communication-efficient estimators that are federated  $(\beps, \bdelta)$-DP and achieve the minimax optimality. By slightly abuse of notations,  we denote $\mcU_{\n, \beps,\bdelta}$ and $\mcM_{\n, \beps, \bdelta}$ as the collection of all federated  $(\beps, \bdelta)$-DP estimators of $U$ and $\Sigma$,  respectively, when the sample sizes in local clients are represented by $\n$.

\subsection{Main contribution}

In this paper,  we establish the minimax optimal rates for federated PCA and covariance matrix estimation under the spiked model with distributed DP constraints.  Recall the definitions of $\Phi_0(n_j,\eps_j,\delta_j)$ and $\Phi_1(n_j,\eps_j,\delta_j)$ in (\ref{eq:dp-opt-spiked-cov}).   Under mild conditions, these rates, up to logarithmic factors,  are 
\begin{equation}\label{eq:fed-dp-opt-spiked-pca}
\begin{aligned}
		\inf_{\whU\in\mcU_{\n, \beps,\bdelta}}\ \sup_{\Sigma\in\Theta (\lambda,\sigma^2)} \ \EE \|\whU\whU^{\top}&-UU^{\top}\|_{\rm F}^2 \asymp \frac{1}{\sum_{j=1}^m \Psi_0^{-2}(n_j, \eps_j, \delta_j)}\bigwedge r\\
		&\asymp \left(\left(\frac{\sigma^2}{\lambda}+\frac{\sigma^4}{\lambda^2}\right)\frac{1}{\sum_{j=1}^m \big(\frac{n_j}{rp}\wedge \frac{n_j^2\eps_j^2}{r^2p^2}\big)}\right)\bigwedge r,
\end{aligned}
\end{equation}
and 
\begin{equation}\label{eq:fed-dp-opt-spiked-cov}
\begin{aligned}
		\inf_{\whSig\in\mcM_{\n, \beps,\bdelta}}\ \sup_{\Sigma\in\Theta (\lambda,\sigma^2)} \ \EE \|\whSig-&\Sigma\|_{\rm F}^2 \asymp \left(\frac{\lambda^2}{\sum_{j=1}^m \Psi_0^{-2}(n_j, \eps_j, \delta_j)}+\frac{\lambda^2}{\sum_{j=1}^m \Psi_1^{-2}(n_j, \eps_j, \delta_j)}\right)\bigwedge (r\lambda^2)\\
		&\asymp \left(\frac{(\lambda\sigma^2+\sigma^4)}{\sum_{j=1}^m \big(\frac{n_j}{rp} \wedge \frac{n_j^2\eps_j^2}{r^2p^2}\big)} +\frac{\lambda^2}{\sum_{j=1}^m \big(\frac{n_j}{r^2}\wedge \frac{n_j^2\eps_j^2}{r^4}\big)}\right) \bigwedge (r\lambda^2).
\end{aligned}
\end{equation}
The bounds in (\ref{eq:fed-dp-opt-spiked-pca}) and (\ref{eq:fed-dp-opt-spiked-cov}) show that the minimax optimal rates achievable by the central server are proportional to the \emph{harmonic mean} of the minimax optimal rates achievable by local clients. By the harmonic mean inequality\footnote{ Harmonic mean inequality: $\frac{m}{\sum_{i=1}^m a_i^{-1}}\leq \frac{\sum_{i=1}^m a_i}{m}$ and the fact: $\sum_{i=1}^m a_{i}^{-1}\geq \sum_{i=1}^{\lceil m/2 \rceil} a_i^{-1}\geq (m/2)\cdot\textsf{med}\{a_i^{-1}\}_{i=1}^m$ for positive numbers $0<a_1\leq a_2\leq \cdots\leq a_m$.}, we get 
\begin{align*}
\frac{1}{\sum_{j=1}^m \Psi_0^{-2}(n_j, \eps_j, \delta_j)}\leq \min\bigg\{\min_{j\in[m]} \Psi_0^{2}(n_j, \eps_j, \delta_j), \frac{\textsf{avg}\big\{\Psi_0^2(n_j, \eps_j,\delta_j)\big\}_{j=1}^m}{m}, \frac{2\textsf{med}\big\{\Psi_0^2(n_j, \eps_j,\delta_j)\big\}_{j=1}^m}{m}\bigg\},
\end{align*}
where $\textsf{avg}$ and $\textsf{med}$ stand for the sample mean and median, respectively. Two intriguing implications can be derived from the aforementioned bound. First, federated PCA exhibits multiple robustness in the sense that the estimator produced by the central server remains consistent as long as at least one local estimator is consistent. Second, even if all local estimators are inconsistent, the central server can still deliver a consistent estimator provided the number of local clients $m\to\infty$. 

Federated PCA reduces to the differentially private PCA problem when $m=1$, in which case the bounds (\ref{eq:fed-dp-opt-spiked-pca}) and (\ref{eq:fed-dp-opt-spiked-cov}) align with the minimax optimal rates established in \cite{cai2024optimalPCA}. Our results immediately imply a performance bound for (non-interactive) \emph{local differentially private} (LDP) PCA under the spiked model. By setting $n_j\equiv r\equiv 1$ and assuming $\eps_j\equiv \eps=O(1)$ , the bound (\ref{eq:fed-dp-opt-spiked-pca}) suggests that the rate of LDP PCA under the spiked model is $p^2/(m\eps^2)$. Here, $m$ represents the sample size. The minimax lower bound easily follows from Theorem~\ref{thm:lower-bound}. However, our proposed estimator from Algorithm~\ref{alg:dpfedpca} will require a strong signal-to-noise ratio condition as  stated in Theorem~\ref{thm:highprob:upperbound} because spectral decomposition is implemented on a single datum. We leave this as future work.

\subsection{Related work}

Differentially private PCA was studied by \cite{blum2005practical,chaudhuri2011differentially, dwork2014analyze} in a deterministic setting without assuming data are independently sampled from a common distribution. \cite{liu2022dp} investigated online methods and established the minimax optimal rate for rank-one PCA under the spiked model. \cite{cai2024optimalPCA} leveraged spectral tools and established the minimax optimal rates for general rank-$r$ PCA and covariance matrix estimation. Federated PCA with homogeneous sample sizes and privacy constraints was studied by \cite{grammenos2020federated}, assuming data arrive sequentially and all data points are uniformly bounded. Their estimator is sub-optimal without exploiting the statistical properties of sample data under the spiked covariance model. \cite{wang2020principal} studied non-interactive local differentially private PCA assuming that the observations are sampled independently from a common distribution but are uniformly bounded.

\subsection{Organization of the paper}

The rest of the paper is organized as follows.  In Section~\ref{sec:upperbd}, we introduce a federated algorithm for differentially private PCA and covariance estimation. The algorithm incorporates three layers of spectral decomposition and employs the Gaussian mechanism to ensure privacy guarantees.  We demonstrate that the algorithm produces valid DP estimators of the population covariance matrix and its spectral projectors, achieving minimax optimal error rates up to logarithmic factors. Additionally, we provide a proof sketch of the main theorem, outlining the technical challenges and our proof strategy.
 Section~\ref{sec:lowerbound} establishes the minimax lower bounds for differentially private federated PCA and covariance estimation. The proof leverages a matrix version of Van Tree's inequality, which we believe is of independent interest.  In Section~\ref{sec:numerical},  we comprehensively evaluate the performance of our algorithm through numerical experiments and real data analysis, comparing it with existing methods.  All technical proofs are included in the supplementary material.

\section{Optimal Federated PCA by Gaussian Mechanism}\label{sec:upperbd}

In this section, we present the federated PCA and covariance matrix estimators under distributed differential privacy constraints.  Due to the different levels of  sensitivity of eigenvectors and eigenvalues,  our approach estimates the eigenvectors and eigenvalues separately.  Based on the given privacy budget,  each local client produces its own differentially private estimator of the eigenvectors and send them to the central server.  The central server aggregate these estimators with specially designed weights.  Since the aggregation may break the geometric constraints of eigenvectors, an additional step of eigen-decomposition is applied,  from which the spectral projector serves as the final estimator of eigenvectors.   The algorithm essentially consists of {\it three} layers of spectral decomposition:  two performed by the local clients and one by the central server. These multiple spectral decompositions are crucial for ensuring differential privacy and achieving minimax optimality. They pose significant technical challenges to the theoretical analysis, where we leverage sophisticated spectral representation tools \citep{xia2021normal,cai2024optimalPCA} to carefully examine the behavior of three-layer eigen-decompositions. 

After the differentially private estimator of eigenvectors is determined, the central server broadcasts them back to the local clients. These are then used to produce differentially private estimators of eigenvalues at each local client according to the given privacy budget. The central server receives these estimators, aggregates them by a weighted sum, and outputs the final estimator of the covariance matrix. The details of our approach are summarized in Algorithm~\ref{alg:dpfedpca}.  The operation $\svd_r(\cdot)$ returns the top-$r$ left singular vectors of a matrix.  
For simplicity, we assume that the rank $r$ and the nuisance noise level $\sigma^2$ are both known. The algorithmic parameters $\alpha_j$ and $\beta_j$ represent the sensitivity levels of empirical eigenvectors and eigenvalues (up to rotations).

\begin{algorithm}
	\caption{Differentially Private Federated PCA and Covariance Estimation}
	\label{alg:dpfedpca}
	\begin{algorithmic}
		\State{\textbf{Input: } sample data $\calD_j:=\big\{X_i^{(j)}: i\in[n_j]\big\}$ at the $j$-th local client  and its privacy budget $(\eps_j, \delta_j)$ for any $ j\in[m]$;  weights $w_j$ and $ v_j>0$ satisfying $\sum_{j=1}^m w_j=\sum_{j=1}^m v_j=1$. }
		\State{\ding{116} Part 1: PCA}
		\For{$j = 1,\cdots,m$} \Comment{on each local client}
		\State{Sample covariance matrix and eigenvectors 
			\begin{align*}
				\hat\Sigma_j = \frac{1}{n_j}\sum_{i=1}^{n_j}X_i^{(j)}X_i^{(j)\top}\quad {\rm and}\quad 
				\tilde U_j = \svd_r(\hat\Sigma_j),
			\end{align*}
		}
		\State{Gaussian mechanism for ensuring $(\epsilon_j,\delta_j)$-DP:
			\begin{align*}
				\hat U_j = \svd_r(\tilde U_j\tilde U_j^\top + Z_j), \quad [Z_j]_{kl} = [Z_j]_{lk}\stackrel{i.i.d.}{\sim} N(0,\alpha_j^2), k>l, [Z_j]_{kk}\stackrel{i.i.d.}{\sim} N(0,2\alpha_j^2),
			\end{align*}
		\State{$\quad\quad$ with $\alpha_j^2:= \frac{8}{\epsilon_j^2}\log(\frac{2.5}{\delta_j})\frac{\sigma^2}{\lambda}(\frac{\sigma^2}{\lambda}+1)\frac{p(r+\log n_j)}{n_j^2}$.}
		}
		\State{Send $\hat U_j$ to the central server.}
		\EndFor
		\State{Weighted average: $\hat U = \svd_r(\sum_{j=1}^m w_j\hat U_j\hat U_j^\top)$.}  \Comment{on central server}
		\State{\ding{116} Part 2: Covariance matrix estimation}
		\State{Send $\hat U$ to local client} \Comment{on central server}
		\For{$j=1,\cdots, m$}
		\State{$(\epsilon_j,\delta_j)$-DP estimator of eigenvalues:  \Comment{on each local client}
		\begin{align*}
			\hat\Lambda_j = \hat U^\top(\hat\Sigma_j - \sigma^2 I)\hat U +  E_j, \ [E_j]_{kl} = [E_j]_{lk}\stackrel{i.i.d.}{\sim} N(0,\beta_j^2), k>l, [E_j]_{kk}\stackrel{i.i.d.}{\sim} N(0,2\beta_j^2),
		\end{align*}
	\State{$\quad\quad$ with $\beta_j^2 := \frac{8}{\epsilon_j^2}\log\left(\frac{2.5}{\delta_j}\right)\frac{\lambda^2(r+\log n_j)^2 + \sigma^4p^2}{n_j^2}$.}
	}
		\State{Send $\hat\Lambda_j$ to the central server.}
		\EndFor
		\State{$\hat\Sigma := \sum_{j=1}^mv_j\hat U\hat\Lambda_j\hat U^\top + \sigma^2 I$.} \Comment{on central server}
		\State{{\bf Output:} $\hat U$ and $\hat\Sigma$.}
	\end{algorithmic}
\end{algorithm}

\begin{lemma}\label{lem:alg-dp}
Suppose that $X_i^{(j)}\stackrel{{\rm i.i.d.}}{\sim} N(0, \Sigma)$ with $\Sigma\in\Theta(\lambda, \sigma^2)$ for $j\in[m]$ and $i\in[n_j]$.  For any weight vectors $\v=(v_1,\cdots,v_m)^{\top}$ and $\w=(w_1,\cdots,w_m)^{\top}$, the output $\hat U\hat U^{\top}$ and $\hat\Sigma$ by Algorithm~\ref{alg:dpfedpca} are federated $(\beps, \bdelta)$-differentially private with probability at least $1-20\sum_{j=1}^m e^{-c_0(n_j\wedge p)}-\sum_{j=1}^m n_j^{-100}$ for some absolute constant $c_0>0$. 
\end{lemma}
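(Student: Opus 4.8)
The plan is to verify the differential privacy guarantee for each local client separately, then combine via the definition of federated DP. For the $j$-th client, the algorithm releases two privatized quantities derived from $\calD_j$: first $\hat U_j$ (computed via the Gaussian mechanism on $\tilde U_j \tilde U_j^\top$), and later $\hat\Lambda_j$ (computed via the Gaussian mechanism on $\hat U^\top(\hat\Sigma_j - \sigma^2 I)\hat U$, where $\hat U$ depends on $\calD_j$ only through the already-privatized $\hat U_j$). Since $\hat U$ is a post-processing of the $\hat U_k$'s and no raw data beyond $\calD_j$'s own release enters the second stage for client $j$'s privacy accounting, the composition we must control is the release of $(\hat U_j, \hat\Lambda_j)$ as a function of $\calD_j$. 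I would split the privacy budget across the two releases (the constants $8 = 2 \cdot 4$ in $\alpha_j^2$ and $\beta_j^2$ suggest each mechanism is calibrated to be $(\eps_j/\sqrt{2}\text{-ish}, \delta_j/2)$-DP, or the paper uses a simple-composition or a direct joint-Gaussian argument; I would follow whichever bookkeeping the stated noise levels encode) and apply the standard Gaussian mechanism guarantee to each.

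The core of the argument is a sensitivity calculation. For the first release, I need: on the event described in the probability bound, the $\ell_2$-sensitivity (in Frobenius norm) of the map $\calD_j \mapsto \tilde U_j \tilde U_j^\top$ under replacing one column $X_i^{(j)}$ is bounded by something matching $\alpha_j$ through the Gaussian-mechanism formula $\alpha_j = \Delta \cdot \sqrt{2\log(2.5/\delta_j)}/\eps_j$ (up to the budget-splitting constant). This is exactly where the results of \cite{cai2024optimalPCA} on the sensitivity of empirical spectral projectors under the spiked covariance model are invoked: they give that, with high probability over the draw of $\calD_j$, $\|\tilde U_j \tilde U_j^\top - \tilde U_j'\tilde U_j'^\top\|_{\rm F} \lesssim \sqrt{\tfrac{\sigma^2}{\lambda}(\tfrac{\sigma^2}{\lambda}+1)\tfrac{p(r+\log n_j)}{n_j^2}}$ for neighboring data sets, which is precisely the quantity inside $\alpha_j^2$ (modulo the $\log(2.5/\delta_j)/\eps_j^2$ privacy factor). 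Similarly, for the second release I need the sensitivity of $\calD_j \mapsto \hat U^\top(\hat\Sigma_j - \sigma^2 I)\hat U$; since $\hat U$ is fixed (data-independent after conditioning on the first-stage outputs of all clients), this reduces to the sensitivity of $\hat\Sigma_j$ sandwiched by a fixed orthonormal $\hat U$, and the bound $\lesssim \sqrt{\tfrac{\lambda^2(r+\log n_j)^2 + \sigma^4 p^2}{n_j^2}}$ again comes from the eigenvalue-sensitivity analysis in \cite{cai2024optimalPCA}. The subtlety is that these sensitivity bounds hold only on a high-probability event (the sample covariance is well-conditioned, eigengaps are as expected, sub-Gaussian norms concentrate, etc.); I would make this event explicit for each client, note it has probability at least $1 - 20 e^{-c_0(n_j\wedge p)} - n_j^{-100}$, and take a union bound over $j\in[m]$ to get the stated $1 - 20\sum_j e^{-c_0(n_j\wedge p)} - \sum_j n_j^{-100}$.

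I expect the main obstacle to be handling the conditioning correctly: differential privacy is a worst-case (over data sets) guarantee, but the sensitivity bounds are only high-probability, so one must argue DP holds \emph{conditionally on} the good event — this is why the lemma states the DP property "with probability at least ...". I would state precisely that there is an event $\calG_j$, measurable with respect to $\calD_j$ (or a bad-set/truncation version of the mechanism), on which the sensitivity bound holds deterministically, and that the released mechanism is $(\eps_j,\delta_j)$-DP when restricted to that event, or equivalently that Algorithm~\ref{alg:dpfedpca} with a truncation fallback is $(\eps_j,\delta_j)$-DP unconditionally while agreeing with the stated output on $\calG_j$. A second minor point is that the second-stage noise level $\beta_j$ must be shown large enough simultaneously with the first-stage guarantee — i.e., the two Gaussian releases compose to $(\eps_j,\delta_j)$ total — so I would carefully track the budget split implied by the factor of $8$ in both variance formulas and invoke basic composition (or the fact that the concatenation of two independent Gaussian mechanisms is itself a Gaussian mechanism with block-diagonal covariance). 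The remainder — assembling the per-client guarantees into the definition of federated $(\beps,\bdelta)$-DP and the union bound over clients — is routine.
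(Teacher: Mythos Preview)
Your plan matches the paper's approach: establish per-client sensitivity bounds for $\tilde U_j\tilde U_j^\top$ and for $\hat U^\top(\hat\Sigma_j-\sigma^2 I)\hat U$, invoke the Gaussian mechanism, and assemble via post-processing and a union bound. Your discussion of composition and of the high-probability conditioning is in fact more explicit than what the paper writes out.

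One point to flag: you plan to obtain the spectral-projector sensitivity by directly citing \cite{cai2024optimalPCA}, but the probability stated in the lemma --- failure at most $20\sum_j e^{-c_0(n_j\wedge p)}+\sum_j n_j^{-100}$ --- is sharper than what that reference gives off the shelf. The paper therefore re-proves the sensitivity bound (its Lemma~\ref{lemma:subspace-sensitivity}), following the same spectral-representation argument as Lemma~3 of \cite{cai2024optimalPCA} but using a leave-one-out decomposition of the higher-order terms to push the failure probability down to $e^{-c_0(n_j\wedge p)}+n_j^{-100}$ per client. If you simply quote the existing result, you will recover the right sensitivity magnitude but not the claimed probability; the re-derivation is where the ``improved probability bound'' mentioned after the lemma statement actually comes from.
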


By the post-processing property of differential privacy \citep[Proposition 2.1]{dwork2014algorithmic}, $\hat U\hat U^{\top}$ is federated $(\beps, \bdelta)$-DP as long as the estimator $\hat U_j\hat U_j^{\top}$ is $(\eps_j, \delta_j)$-DP at the $j$-th local client for all $j\in[m]$.  The proof of Lemma~\ref{lem:alg-dp} mainly focuses on establishing the privacy guarantee at local clients, which follows similarly to the proof of Lemma~2.2 in \cite{cai2024optimalPCA} except that we have an improved probability bound here.  

The following theorem shows that the final estimator $\hat U\hat U^{\top}$ is minimax optimal, up to logarithmic factors and the dependence on $\delta_j$'s,  if the weights $w_k$ are properly chosen.  Recall that $\Psi_{0}(n_j,  \eps_j, \delta_j)$, defined in (\ref{eq:dp-opt-spiked-cov}), quantifies the error rate for $\EE\|\hat U_j\hat U_j^{\top}-UU^{\top}\|_{\rm F}$ achieved at the $j$-th local client. 

\begin{theorem}\label{thm:highprob:upperbound}
Suppose $X_i^{(j)}\stackrel{{\rm i.i.d.}}{\sim} N(0, \Sigma)$ with $\Sigma\in\Theta(\lambda,\sigma^2)$, $n_j\geq C_1(r\log n_j + \log^2 n_j)$, $p\geq C_1\log n_j$ for some large constant $C_1>0$, and define $\tilde\Psi_{0}(n_j,  \eps_j, \delta_j)$ as 
\begin{equation}\label{eq:gamma_j}
\tilde\Psi_{0}(n_j, \eps_j,\delta_j):=\left(\frac{\sigma^2}{\lambda}+\sqrt{\frac{\sigma^2}{\lambda}}\right)\left(\sqrt{\frac{rp}{n_j}}+\frac{p\sqrt{r(r+\log n_j)}}{n_j\eps_j}\sqrt{\log\frac{2.5}{\delta_j}}\right)<c_1\sqrt{r},\quad \forall j\in[m].
\end{equation}
 satisfying $\tilde\Psi_{0}(n_j, \eps_j,\delta_j)<c_1\sqrt{r}$ for some small universal constant $c_1\in(0,1/2)$ for all $j\in[m]$. Let $\hat U$ be the estimator output by Algorithm~\ref{alg:dpfedpca} with weight $w_k:=\tilde\Psi_{0}^{-2}(n_k,\eps_k,\delta_k)/\sum_{j=1}^m \tilde\Psi_{0}^{-2}(n_j,\eps_j,\delta_j)$ for all $k\in[m]$.  Then  there exist  absolute constants $c_2, C_2>0$ such that 
	\begin{align}\label{eq:thm-PCA-upb1}
		\fro{\hat U\hat U^\top - UU^\top}^2 &\leq  \frac{C_2}{\sum_{j=1}^m \tilde\Psi_{0}^{-2}(n_j,\eps_j,\delta_j)} \bigwedge (2r), 
	\end{align}
which	holds with probability at least $1 - 22\sum_{j=1}^m e^{-c_2(n_j\wedge p)}$. Moreover,  if $(\lambda/\sigma^2)\sum_{j=1}^m n_j\leq e^{c_2\min_{j\in[m]} (n_j \wedge p)}$, then
\begin{align}\label{eq:thm-PCA-upb2}
	\EE\fro{\hat U\hat U^\top - UU^\top}^2 &\leq  \frac{C_2}{\sum_{j=1}^m \tilde\Psi_{0}^{-2}(n_j,\eps_j,\delta_j)} \bigwedge (2r).
	\end{align}
\end{theorem}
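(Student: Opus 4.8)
The plan is to analyze the three-layer spectral decomposition by a Neumann-series (perturbation) expansion carried out layer by layer, tracking both the stochastic error at each local client and the artificial Gaussian noise injected for privacy. First I would set up notation: write $\hat U_j\hat U_j^\top = UU^\top + L_j(\Delta_j) + \mathrm{rem}_j$, where $\Delta_j$ collects the first-layer sampling perturbation $\hat\Sigma_j - \Sigma$ plus the second-layer privacy noise $Z_j$ (passed through the $\svd_r$), $L_j$ is the linear (first-order) term in the spectral-projector expansion \citep{xia2021normal}, and $\mathrm{rem}_j$ is the higher-order remainder. Using the sensitivity/perturbation bounds from \cite{cai2024optimalPCA} together with standard concentration for sample covariances of Gaussians and for Gaussian random matrices, I would show that on the good event (the one of probability $1-22\sum_j e^{-c_2(n_j\wedge p)}$) each $\|L_j(\Delta_j)\|_{\rm F}$ is of order $\tilde\Psi_0(n_j,\eps_j,\delta_j)$ and each remainder $\|\mathrm{rem}_j\|_{\rm F}$ is of strictly smaller order, namely $O(\tilde\Psi_0^2(n_j,\eps_j,\delta_j)/\sqrt r)$, which is $\ll \tilde\Psi_0(n_j,\eps_j,\delta_j)$ by the assumption $\tilde\Psi_0 < c_1\sqrt r$.

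Next I would handle the third (aggregation) layer. Write $M:=\sum_j w_j \hat U_j\hat U_j^\top = UU^\top + \sum_j w_j L_j(\Delta_j) + \sum_j w_j\,\mathrm{rem}_j$, and apply the spectral-projector expansion once more to $\hat U\hat U^\top = \svd_r(M)\svd_r(M)^\top$ around $UU^\top$. The key point is that the leading term is controlled by $\bigl\|\sum_j w_j L_j(\Delta_j)\bigr\|_{\rm F}$. I would split this into its zero-mean stochastic part and the privacy-noise part, both of which are, after projecting onto the relevant $U_\perp$ directions, sums of independent mean-zero random matrices; a Bernstein/Gaussian-concentration argument then gives
\[
\Bigl\|\textstyle\sum_j w_j L_j(\Delta_j)\Bigr\|_{\rm F}^2 \;\lesssim\; \sum_j w_j^2\,\tilde\Psi_0^2(n_j,\eps_j,\delta_j)
\]
with high probability. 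With the prescribed choice $w_j = \tilde\Psi_0^{-2}(n_j,\eps_j,\delta_j)/\sum_k \tilde\Psi_0^{-2}(n_k,\eps_k,\delta_k)$, the right side collapses to exactly $1/\sum_j \tilde\Psi_0^{-2}(n_j,\eps_j,\delta_j)$, which is the claimed rate; this is precisely where the harmonic-mean structure emerges, and it explains why these weights are optimal. The higher-order terms from all three layers, being of the form $\sum_j w_j\,\mathrm{rem}_j$ or quadratic in $\sum_j w_j L_j$, are dominated by the leading term because of the $c_1\sqrt r$ condition, so (\ref{eq:thm-PCA-upb1}) follows; the trivial bound $\|\hat U\hat U^\top - UU^\top\|_{\rm F}^2\le 2r$ always holds, giving the minimum.

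For the expectation bound (\ref{eq:thm-PCA-upb2}), I would integrate the high-probability bound against the tail: on the complement event the error is at most $2r$, contributing $\le 2r\cdot 22\sum_j e^{-c_2(n_j\wedge p)}$, and the extra hypothesis $(\lambda/\sigma^2)\sum_j n_j \le e^{c_2\min_j(n_j\wedge p)}$ is exactly what forces this tail contribution to be dominated by the main rate (the main rate is bounded below by a quantity comparable to $(\sigma^2/\lambda)/\sum_j n_j$ up to the other factors, so the exponentially small probability times $r$ is negligible against it). I expect the main obstacle to be the bookkeeping in the second step: controlling the composition of three Neumann series means there are many cross terms mixing first-layer stochastic error, second-layer privacy noise, and third-layer aggregation perturbation, and one must verify uniformly — across all $j$ and all orders — that every term not equal to the designated leading term is smaller by at least a factor $\tilde\Psi_0/\sqrt r$ or by a factor $w_j$; assembling these into a single clean bound, rather than the leading-order heuristic above, is the technical heart of the argument, and is where the tools of \cite{xia2021normal} and \cite{cai2024optimalPCA} must be deployed with care.
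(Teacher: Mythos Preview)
Your overall architecture is right, and the treatment of the linear piece $\sum_j w_j L_j(\Delta_j)$ via independence and concentration is fine. The gap is in the aggregated remainder $\sum_j w_j\,\mathrm{rem}_j$. Your bound $\|\mathrm{rem}_j\|_{\rm F}=O(\tilde\Psi_0^2(n_j)/\sqrt r)$ is correct, but if you control $\bigl\|\sum_j w_j\,\mathrm{rem}_j\bigr\|_{\rm F}$ by the triangle inequality, then with the optimal weights $w_j=\tilde\Psi_0^{-2}(n_j)/\sum_k\tilde\Psi_0^{-2}(n_k)$ you obtain
\[
\sum_j w_j\,\frac{\tilde\Psi_0^2(n_j)}{\sqrt r}\;=\;\frac{m}{\sqrt r\,\sum_k\tilde\Psi_0^{-2}(n_k)},
\]
which exceeds the target $\big(\sum_k\tilde\Psi_0^{-2}(n_k)\big)^{-1/2}$ by a factor $m/\sqrt{r\sum_k\tilde\Psi_0^{-2}(n_k)}$. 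In the homogeneous case this factor is $\sqrt m\,\tilde\Psi_0/\sqrt r$, which is unbounded as $m\to\infty$ under the sole hypothesis $\tilde\Psi_0<c_1\sqrt r$. The remainders $\mathrm{rem}_j$ are not mean-zero, so the independence/concentration argument you use for the linear part cannot be reused here; the claim that ``higher-order terms \dots\ are dominated by the leading term because of the $c_1\sqrt r$ condition'' is exactly where the argument breaks.

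The paper closes this gap by \emph{not} splitting $\Delta_j=\hat U_j\hat U_j^\top-UU^\top$ into linear plus remainder. Instead it expands $\fro{\hat U\hat U^\top-UU^\top}^2$ directly as a full Neumann series in the $\Delta_j$'s, producing sums over $(j_1,\dots,j_l)\in[m]^l$ of traces of products $M(s_1)^\top\Delta_{j_1}M(s_2)\cdots M(s_l)^\top\Delta_{j_l}M(s_{l+1})$. The key device is a parity observation: the \emph{full} block $U^\top\Delta_j U_\perp$ (not just its linearization) is an odd function of $(G^{(j)},H^{(j)},Z_{j,2})$, while $U^\top\Delta_j U$ and $U_\perp^\top\Delta_j U_\perp$ are even. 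Consequently, whenever two off-diagonal factors $U^\top\Delta_{j_{i_1}}U_\perp$ and $U_\perp^\top\Delta_{j_{i_2}}U$ carry distinct indices that appear nowhere else in the product, the expectation of that summand vanishes by independence. This eliminates the bulk of the $m^l$ terms; the surviving index configurations force at least one repetition, which converts a $(\sum_j w_j u_j)^2$ into a $\sum_j w_j^2 u_j^2$ and yields the harmonic-mean rate without any spurious $m$. For the high-probability statement the paper adds a Gaussian Lipschitz concentration step (via a smooth truncation $\phi$) together with a decoupling argument for the cross sums. Your final paragraph correctly flags the cross terms as the crux; the missing idea is this parity/combinatorial cancellation operating on the full $\Delta_j$, not a size bound on individual remainders.
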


Note that the order of $\tilde\Psi_{0}(n_j,\eps_j,\delta_j)$ and $\Psi_0(n_j, \eps_j,\delta_j)$ only differs by $O(\sqrt{\log(1/\delta_j)})$ and $O(\log n_j)$ factors.  They represent the minimax optimal spectral norm rate of estimating $UU^{\top}$ for the $j$-th local client. The condition $\tilde\Psi_{0}(n_j,\eps_j,\delta_j)<\sqrt{r}$ requires that the differentially private estimator published by each local client is non-trivial and informative, albeit not necessarily consistent.  
Based on Theorem~\ref{thm:highprob:upperbound}, the optimal weights $w_k$ for aggregation are proportional to $\tilde\Psi_{0}^{-2}(n_k,\eps_k,\delta_k)$, respectively.  While the definitions of $\tilde\Psi_{0}(n_k,\eps_k,\delta_k)$'s involve the unknown signal strength $\lambda$, the weight $w_k$ only depends on known sample sizes and privacy constraints. In fact, we can simply set the following data-independent weight:
$$
w_k:=\frac{\sqrt{p/n_k}+(p/n_k\eps_k)\sqrt{(r+\log n_k)\log(2.5/\delta_k)}}{\sum_{j=1}^m \sqrt{p/n_j}+(p/n_j\eps_j)\sqrt{(r+\log n_j)\log(2.5/\delta_j)} },\quad \forall k\in[m].
$$
In the homogeneous case where $n_k\asymp n$,  $\eps_k\asymp \eps$,  and $\delta_k\asymp \delta$ for all $k\in[m]$,   these weights are $\omega_k\asymp m^{-1}$ of the same order.

The upper bound (\ref{eq:thm-PCA-upb1}) is the (scaled) harmonic mean of the error bounds for $\|\hat U_j \hat U_j^{\top}-UU^{\top}\|_{\rm F}^2$ for all $j\in[m]$.  Let us briefly elaborate on the technical challenges. Under mild conditions, the Davis-Kahan theorem \citep{davis1970rotation} yields
\begin{align}\label{eq:EhatU-U}
\EE \|\hat U\hat U^{\top}-UU^{\top}\|_{\rm F}^2\lesssim \EE \Big\|\sum_{j=1}^m w_j \Delta_j\Big\|_{\rm F}^2
=\sum_{j=1}^m w_j^2\EE\|\Delta_j\|_{\rm F}^2+\sum_{1\leq k_1\neq k_2\leq m} w_{k_1}w_{k_2}\EE \big< \Delta_{k_1},  \Delta_{k_2}\big>,
\end{align}
where $\Delta_j:=\hat U_j\hat U_j^{\top}-UU^{\top}$. The bound (\ref{eq:thm-PCA-upb2}) is primarily contributed by the first term. It remains to carefully control the expected inner product $\EE \big< \Delta_{k_1},  \Delta_{k_2}\big>$, where the naive approach 
by applying the Cauchy-Schwartz inequality delivers a sub-optimal bound. We exploit the spectral representation formula from \cite{xia2021normal} to show that the second term in \eqref{eq:EhatU-U} is dominated by the first one. 

\begin{proof}[Proof sketch of Theorem~\ref{thm:highprob:upperbound}] There exist three layers of spectral decomposition in Algorithm~\ref{alg:dpfedpca}.  Applying the spectral representation formula from \cite{xia2021normal} to the last eigen-decomposition,  we obtain 
\begin{align}\label{eq:proof-sketch-eq1}
	&\quad \frac{1}{2}\fro{\hat U\hat U^\top - UU^\top}^2\notag\\
	&= \sum_{l\geq 2}\sum_{\s\in\SS_l}(-1)^{\lzero{\s}}\sum_{j_1,\cdots,j_l\in[m]}w_{j_1}\cdots w_{j_l}\cdot\tr\Big(U^\top M(s_1)\underline{M(s_1)^\top\Delta_{j_1} M(s_2)}\notag\\
	&\hspace{8cm}\cdots\underline{M(s_l)^\top\Delta_{j_l} M(s_{l+1})}M(s_{l+1})^\top U\Big),
\end{align}
where $M(s)$ is a matrix-valued function, such that $M(0) = U_{\perp}$ and $M(s) = U$ for $s>0$, and $\SS_l := \big\{\s=(s_1,\cdots, s_{l+1})^{\top}\in\ZZ^{l+1}: s_1,\cdots,s_{l+1}\geq 0, s_1+\cdots + s_{l+1}=l\big\}.$ We use the underline below to emphasize the recurrent terms in the pattern $M(s_l)\Delta_j M(s_{l+1})$. Essentially,  three different patterns of terms appear in the summands of products in eq. (\ref{eq:proof-sketch-eq1}): $U^{\top}\Delta_j U$,  $U^{\top}\Delta_j U_{\perp}$,  and $U_{\perp}^{\top}\Delta_j U_{\perp}$.  

Recall $\Delta_j=\hat U_j\hat U_j^{\top}-UU^{\top}$ where $\hat U_j$ consists of the top-$r$ eigenvectors of $UU^{\top}+D_j$ with $D_j:=\tilde U_j\tilde U_j^{\top}-UU^{\top}+Z_j$.  Similarly,  we can write 
\begin{align}\label{eq:proof-sketch-eq2}
	\Delta_j =\sum_{l\geq1}\sum_{\s\in\SS_l}(-1)^{\lzero{\s} + 1}M(s_1)\cdot \underline{M(s_1)^\top D_{j}M(s_2)}\cdots \underline{M(s_l)^\top D_j M(s_{l+1})}\cdot M(s_{l+1})^\top
\end{align}
and
\begin{align}
\tilde U_j\tilde U_j^{\top}-&UU^{\top}=\sum_{l\geq 1}\sum_{\s\in\SS_l}  (-1)^{\lzero{\s} + 1}\cdot\notag\\
&M(s_1)\Lambda^{-s_1}\underline{M(s_1)^\top\Xi_j M(s_2)}\Lambda^{-s_2}\cdots\Lambda^{-s_l}\underline{M(s_l)^\top\Xi_j M(s_{l+1})}\Lambda^{-s_{l+1}}M(s_{l+1})^\top.
\end{align}
The above representation formulas show that the basic building elements are the terms $U^{\top}(\Xi_j+Z_j)U$,  $U^{\top}(\Xi_j+Z_j)U_{\perp}$,  and $U_{\perp}^{\top}(\Xi_j+Z_j)U_{\perp}$.  As a result,  we will show that there is an event $\calE$ with $\PP(\calE)\geq 1-14\sum_{j=1}^m e^{-c_2(p\wedge n_j)}$,  in which the following bounds hold 
\begin{equation}\label{eq:proof-sketch-eq4}
\begin{aligned}
\max\Big\{\big\|U^{\top}\Delta_j U \big\|, \big\|U_{\perp}^{\top}\Delta_j U_{\perp} \big\|\Big\}\lesssim & \left(\frac{\sigma^2}{\lambda}+\frac{\sigma^4}{\lambda^2}\right)\left(\frac{p}{n_j}+\frac{p^2(r+\log n_j)}{n_j^2\eps_j^2}\log\frac{2.5}{\delta_j}\right),\\
\big\|U^{\top}\Delta_j U_{\perp}\big\|\lesssim& \left(\frac{\sigma}{\sqrt{\lambda}}+\frac{\sigma^2}{\lambda}\right)\left(\sqrt{\frac{p}{n_j}}+\frac{p\sqrt{r+\log n_j}}{n_j\eps_j}\log^{1/2}\frac{2.5}{\delta_j}\right).
\end{aligned}
\end{equation}
For each fixed $\s\in\SS_l$, we consider the upper bound for 
\begin{align}\label{eq:proof-sketch-eq3}
	\bigg|\EE\sum_{j_1,\cdots,j_l\in[m]}w_{j_1}\cdots w_{j_l} \tr(U^\top M(s_1)\underline{M(s_1)^\top\Delta_{j_1} M(s_2)}\cdots\underline{M(s_l)^\top\Delta_{j_l} M(s_{l+1})}M(s_{l+1})^\top U)\cdot\mathds{1}(\calE)\bigg|.
\end{align}
The above summand is non-zero if and only if $s_1, s_{l+1}\geq 1$. Since $s_1+\cdots + s_{l+1} =l$, there exists $1\leq i_1<i_2\leq l$, such that 
$s_{i_1}>0, s_{i_1+1}=0$, and $s_{i_2}=0,s_{i_2+1}>0$. We define
\begin{align*}
	\II_1(\s) = \bigg\{\j\in[m]^l: j_{i_1}\neq j_{i_2}, \{j_{i_1},j_{i_2}\}\cap \{j_1,\cdots, \bar{j_{i_1}},\cdots, \bar{j_{i_2}},\cdots, j_{l}\} = \emptyset\bigg\}.
\end{align*}
Here $\bar\cdot$ means $\cdot$ is absent from the set. Then $|\II_1(\s)| = m(m-1)(m-2)^{l-2}$. Denote $\II_2(\s):=[m]^l\setminus\II_1(\s)$. The sum in (\ref{eq:proof-sketch-eq3}) can be decomposed into two parts: over $\II_1(\s)$ and $\II_2(\s)$, respectively.  The proof is concluded by bounding the summands in (\ref{eq:proof-sketch-eq3}) for all $\j\in\II_s(\s)$ using the facts (\ref{eq:proof-sketch-eq4}). 
\end{proof}

We now show that the covariance matrix estimator $\hat\Sigma$ output by Algorithm~\ref{alg:dpfedpca} achieves the minimax optimal rate. For each $j\in[m]$, define
\begin{align}\label{eq:gamma_1}
\tilde\Psi_{1}(n_j,\eps_j,\delta_j):=\sqrt{\frac{r(r+\log n_j)}{n_j}}+\frac{\sqrt{r(r+\log n_j)^{3}}}{n_j\eps_j}\sqrt{\log\frac{2.5}{\delta_j}},
\end{align}
which satisfies $\tilde\Psi_{1}(n_j,\eps_j,\delta_j)\asymp \Psi_1(n_j, \eps_j, \delta_j)$, up to $O\big(\sqrt{\log(1/\delta_j)}\big)$ and $O(\log n_j)$ factors.  Recall that $\lambda \cdot \tilde\Psi_{1}(n_j,\eps_j,\delta_j)$ quantifies the error rate for estimating eigenvalues under the $(\eps_j, \delta_j)$-DP  constraint achieved by the $j$-th local client.

\begin{theorem}\label{thm:covariance:upperbound}
Suppose the conditions in Theorem \ref{thm:highprob:upperbound} hold, and set the weights in Algorithm~\ref{alg:dpfedpca} such that $\sum_{j=1}^m v_j=1$ and 
$$
v_j\propto \left(\frac{\lambda^2+\sigma^4}{n_j} + \frac{8}{\epsilon_j^2}\log\left(\frac{2.5}{\delta_j}\right)\frac{\lambda^2(r+\log n_j)^2 + \sigma^4p^2}{n_j^2}\right)^{-1}.
$$ 
There exist absolute constants $c_2, C_2>0$ such that the bound
\begin{align*}
\fro{\hat\Sigma - \Sigma}^2 \leq & C_2\left(\frac{\lambda^2}{\sum_{j=1}^m \tilde\Psi_{1}^{-2}(n_j,\eps_j,\delta_j)}+\frac{\lambda^2}{\sum_{j=1}^m \tilde\Psi_{0}^{-2}(n_j,\eps_j,\delta_j)}\right)\bigwedge (2r\lambda^2)
\end{align*}
holds with probability at least $1-23\sum_{j=1}^m e^{-c_0(n_j\wedge p)} -\sum_{j=1}^m n_j^{-100}$. Moreover, if $(\lambda/\sigma^2)\sum_{j=1}^m n_j \leq  e^{c_0\min_{j\in[m]}(n_j\wedge p)}$, then we have 
\begin{align}\label{eq:thm-cov-upb1}
\EE\fro{\hat\Sigma - \Sigma}^2 \leq & C_2\left(\frac{\lambda^2}{\sum_{j=1}^m \tilde\Psi_{1}^{-2}(n_j,\eps_j,\delta_j)}+\frac{\lambda^2}{\sum_{j=1}^m \tilde\Psi_{0}^{-2}(n_j,\eps_j,\delta_j)}\right)\bigwedge (2r\lambda^2).
\end{align}
\end{theorem}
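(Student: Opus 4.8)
\textbf{Proof plan for Theorem~\ref{thm:covariance:upperbound}.}

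The plan is to decompose the covariance estimation error into three sources and control each separately: (i) the error from estimating the eigenvectors (inherited from Theorem~\ref{thm:highprob:upperbound}), (ii) the stochastic error from estimating the eigenvalues at the local clients, and (iii) the artificial Gaussian noise $E_j$ added for privacy. Write $\hat\Sigma - \Sigma = \sum_{j=1}^m v_j\big(\hat U\hat\Lambda_j\hat U^\top - U\Lambda U^\top\big)$, and substitute $\hat\Lambda_j = \hat U^\top(\hat\Sigma_j - \sigma^2 I)\hat U + E_j$. This gives $\hat\Sigma - \Sigma = \sum_j v_j\big(\hat U\hat U^\top(\hat\Sigma_j - \sigma^2 I)\hat U\hat U^\top - U\Lambda U^\top\big) + \sum_j v_j \hat U E_j\hat U^\top$. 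The second sum is pure privacy noise: since $\hat U$ has orthonormal columns, $\|\sum_j v_j\hat U E_j\hat U^\top\|_{\rm F}^2 = \|\sum_j v_j E_j\|_{\rm F}^2$, which concentrates around $\sum_j v_j^2\,\EE\|E_j\|_{\rm F}^2 \asymp r^2\sum_j v_j^2\beta_j^2$; with the stated choice of $v_j$ this is dominated by $\big(\sum_j \tilde\Psi_1^{-2}(n_j,\eps_j,\delta_j)\big)^{-1}$ up to constants, using the harmonic-mean bookkeeping already exploited for the PCA part.

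For the first sum, further split $\hat\Sigma_j - \sigma^2 I = (\Sigma - \sigma^2 I) + (\hat\Sigma_j - \Sigma) = U\Lambda U^\top + \Xi_j$ where $\Xi_j := \hat\Sigma_j - \Sigma$ is the centered sample-covariance fluctuation. Then the $j$-th term becomes $\hat U\hat U^\top U\Lambda U^\top\hat U\hat U^\top - U\Lambda U^\top + \hat U\hat U^\top\Xi_j\hat U\hat U^\top$. The first piece, $\hat U\hat U^\top U\Lambda U^\top\hat U\hat U^\top - U\Lambda U^\top$, is deterministic given $\hat U$ and is controlled by $\|\hat U\hat U^\top - UU^\top\|_{\rm F}$: a routine telescoping ($\hat U\hat U^\top A\hat U\hat U^\top - UU^\top A\,UU^\top$ with $A = U\Lambda U^\top$) shows its Frobenius norm is $\lesssim \lambda\|\hat U\hat U^\top - UU^\top\|_{\rm F}$, and squaring and invoking (\ref{eq:thm-PCA-upb1}) produces the $\lambda^2\big(\sum_j\tilde\Psi_0^{-2}\big)^{-1}$ term. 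The piece $\sum_j v_j\hat U\hat U^\top\Xi_j\hat U\hat U^\top$ is the genuine eigenvalue-estimation stochastic error: conditionally on $\hat U$ (which is independent of nothing in particular — here one must be careful, since $\hat U$ depends on all the $\hat\Sigma_j$), $\hat U^\top\Xi_j\hat U$ is an $r\times r$ matrix with entries of order $\lambda\sqrt{(r+\log n_j)/n_j}$ plus $\sigma^2\sqrt{p/n_j}$-type contributions; its squared Frobenius norm has expectation $\asymp (\lambda^2 + \sigma^4 p/r)\cdot r^2/n_j$ after summing against $v_j^2$, again matching the claimed rate through the harmonic-mean weighting. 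To handle the dependence between $\hat U$ and $\hat\Sigma_j$ cleanly, I would condition on the high-probability event $\calE$ from the proof sketch of Theorem~\ref{thm:highprob:upperbound} on which all $\Delta_j$ and $\Xi_j$ are simultaneously well-behaved, and bound everything deterministically on $\calE$, absorbing $\PP(\calE^c)$ into the failure probability and (for the expectation bound) using the crude trivial bound $\|\hat\Sigma - \Sigma\|_{\rm F}^2 \le 2r\lambda^2 + (\text{small})$ together with the exponential control of $\PP(\calE^c)$ under the condition $(\lambda/\sigma^2)\sum_j n_j \le e^{c_0\min_j(n_j\wedge p)}$, exactly as in the PCA case.

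The main obstacle I anticipate is the eigenvalue-error cross term $\sum_{k_1\ne k_2} v_{k_1}v_{k_2}\,\EE\langle \hat U\hat U^\top\Xi_{k_1}\hat U\hat U^\top, \hat U\hat U^\top\Xi_{k_2}\hat U\hat U^\top\rangle$: a naive Cauchy--Schwarz bound would give the sum-of-errors rather than the harmonic-mean (sum-of-reciprocals) rate. The resolution parallels the PCA argument — one expands $\hat U$ itself via the Neumann/spectral representation (\ref{eq:proof-sketch-eq2}) so that the leading-order part of $\hat U\hat U^\top\Xi_k\hat U\hat U^\top$ is $UU^\top\Xi_k UU^\top$, whose cross terms vanish in expectation because $\Xi_{k_1}$ and $\Xi_{k_2}$ are independent and mean-zero, and the higher-order correction terms are lower-order on $\calE$. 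A secondary nuisance is verifying that the data-independent weights $v_j$ as written are, up to constants, equivalent to the inverse-variance-optimal weights $v_j \propto \big(\lambda^2\tilde\Psi_1^2(n_j,\eps_j,\delta_j) + \lambda^2\tilde\Psi_0^2(n_j,\eps_j,\delta_j)\text{-type terms}\big)^{-1}$; this is a direct comparison of the explicit formulas for $\tilde\Psi_0$, $\tilde\Psi_1$, and $\beta_j^2$, so it is bookkeeping rather than a conceptual difficulty. The final bound follows by collecting the three contributions and taking the minimum with the trivial $2r\lambda^2$.
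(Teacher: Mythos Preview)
Your proposal is correct in outline and matches the paper's three-way split (eigenvector error, eigenvalue stochastic error, privacy noise $E_j$), the optimal weighting, and the event-conditioning strategy for the expectation bound. The difference is in the telescoping, and the paper's choice is cleaner in a way that eliminates what you flag as the ``main obstacle.''

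You split $\hat\Sigma_j-\sigma^2 I = U\Lambda U^\top+\Xi_j$ first, which produces the term $\sum_j v_j\,\hat U\hat U^\top\Xi_j\hat U\hat U^\top$ where $\hat U$ and $\Xi_j$ are entangled; you then propose a Neumann expansion of $\hat U\hat U^\top$ to kill the cross terms in expectation. The paper instead telescopes \emph{before} separating the signal, writing (with $A=\sum_j v_j(\hat\Sigma_j-\sigma^2 I)$)
\[
\hat U\hat U^\top A\,\hat U\hat U^\top - U\Lambda U^\top
=(\hat U\hat U^\top-UU^\top)A\,UU^\top+\hat U\hat U^\top A\,(\hat U\hat U^\top-UU^\top)+UU^\top\Big(\sum_j v_j\hat\Sigma_j-\Sigma\Big)UU^\top.
\]
The first two pieces are $\lesssim\lambda\,\fro{\hat U\hat U^\top-UU^\top}$ on the event $\{\|\hat\Sigma_j-\sigma^2 I\|\lesssim\lambda\ \forall j\}$. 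The third piece is sandwiched by the \emph{true} $U$, so $U^\top(\sum_j v_j\hat\Sigma_j-\Sigma)U$ is a weighted sum of independent centered blocks and a direct application of the covariance-concentration lemma gives $\|U^\top(\sum_j v_j\hat\Sigma_j-\Sigma)U\|\lesssim(\lambda+\sigma^2)\sqrt{\sum_j v_j^2/n_j}\,\sqrt{r+t}$ with no cross-term analysis and no Neumann expansion needed. Your route would work, but the detour is avoidable. The remaining pieces of your plan (the $\|\sum_j v_j E_j\|_{\rm F}$ bound, the Cauchy--Schwarz-plus-fourth-moment handling of the bad event for the expectation, and the case-analysis bookkeeping matching the stated $v_j$ to $\tilde\Psi_0,\tilde\Psi_1$) agree with the paper.
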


Our proposed Algorithm~\ref{alg:dpfedpca} separately estimates the eigenvectors and eigenvalues under privacy constraints. The central server aggregates differentially private estimators of both the eigenvalues and eigenvectors sent from the local clients. Therefore, the bound (\ref{eq:thm-cov-upb1}) involves two terms, primarily contributed by the estimation of the eigenvalues and eigenvectors, respectively. The bound (\ref{eq:EhatU-U}) demonstrates the (doubly) multiple robustness of the estimator $\hat\Sigma$. As long as one client can provide a consistent estimator of the eigenvectors and another (can be the same client) can provide a consistent estimator of the eigenvalues, the aggregated estimator $\hat\Sigma$ delivered by the central server remains consistent. The weights $v_j$  rely on the unknown eigenvalue $\lambda$. For simplicity, the empirical eigenvalue can be used in practice. Alternatively, one can resort to random matrix theory \citep{benaych2011eigenvalues} to obtain a sharper estimate of $\lambda$. 

In the homogeneous case when $n_j\asymp n, \eps_j\asymp \eps$ and $\delta_j\asymp \delta$, we have $\Psi_0(n_j, \eps_j, \delta_j)\asymp \Psi_0(n, \eps, \delta)$ and $\Psi_1(n_j, \eps_j, \delta_j)\asymp \Psi_1(n, \eps, \delta)$ for all $j\in[m]$. Theorems~\ref{thm:highprob:upperbound} and \ref{thm:covariance:upperbound} show that  the estimators $\widehat U$ and $\widehat \Sigma$ output by Algoirthm~\ref{alg:dpfedpca} achive the rates (up to logarithmic factors):
\begin{equation*}
	\begin{aligned}
		&\EE \|\whU\whU^{\top}-UU^{\top}\|_{\rm F}^2 \lesssim \bigg(\frac{\sigma^4}{\lambda^2}+\frac{\sigma^2}{\lambda}\bigg) \left(\frac{pr}{mn} +\frac{p^2 r^2}{mn^2\epsilon^2}\right)\bigwedge r;\\
		&\EE \|\whSig-\Sigma\|_{\rm F}^2 \lesssim 
		 \lambda^2 \left(\frac{r^2}{mn} + \frac{r^4}{mn^2\varepsilon^2}\right) + \sigma^2(\lambda+\sigma^2) \left(\frac{pr}{mn} + \frac{p^2r^2}{mn^2 \varepsilon^2} \right)\bigwedge (r\lambda^2),
	\end{aligned}
\end{equation*} 
which decay whenever the number of local clients $m$ or local sample size $n$ increases.  The aggregate sample size across all local clients is $mn$. The statistical error, quantified by the rate $pr/(mn)$, is inversely proportional to this total sample size. Notably, this rate aligns with the minimax optimal rate achievable by estimators that utilize all observations collectively \citep{cai2016estimating}. This implies that distributing observations evenly among $m$ local clients does not compromise statistical efficiency.  In contrast, the privacy cost is represented by the rate $p^2r^2/(mn^2\varepsilon^2)$, which decreases as the number of local clients increases. As demonstrated in \cite{cai2024optimalPCA}, the rate $p^2r^2/(n^2\varepsilon^2)$ reflects the privacy cost at each individual local client. This suggests that aggregating multiple differentially private estimators can effectively reduce the overall privacy cost.  Another interpretation of the rate  $p^2r^2/(mn^2\varepsilon^2)$  is to express it as $m\cdot p^2r^2/(m^2n^2\varepsilon^2)$,  where $mn$ in the denominator represents the total sample size. When the total sample size is fixed, the privacy cost increases with the number of local clients $m$.  This is because maintaining differential privacy becomes more challenging as the number of observations per local client decreases.

\section{Minimax Lower Bound}\label{sec:lowerbound}

In this section, we establish the minimax lower bounds for PCA and covariance matrix estimation under the federated $(\beps, \bdelta)$-DP constraints. These lower bounds match, up to logarithmic factors and $\delta_j$-terms, the upper bounds achieved by our proposed estimators derived from Algorithm~\ref{alg:dpfedpca}. 

Under the spiked model with a covariance matrix $\Sigma\in\Theta(\lambda,\sigma^2)$, we denote $\mcU_{\n, \beps,\bdelta}$ and $\mcM_{\n, \beps, \bdelta}$ the collection of all federated  $(\beps, \bdelta)$-DP estimators of $U$ and $\Sigma$,  respectively. The vector $\n:=(n_1,\cdots,n_m)^{\top}$ stands for the sample sizes at local clients. Recall that the rates $\Psi_0(n_j, \eps_j, \delta_j)$ and $\Psi_1(n_j, \eps_j, \delta_j)$ defined in (\ref{eq:dp-opt-spiked-cov}) characterize the minimax optimal rates for differentially private estimators achievable at the $j$-th local client. Moreover, $\Psi_0(n_j,\eps_j,\delta_j)\asymp \tilde\Psi_0(n_j,\eps_j,\delta_j)$ and $\Psi_1(n_j,\eps_j,\delta_j)\asymp \tilde\Psi_1(n_j,\eps_j,\delta_j)$, up to logarithmic factors, for all $j\in[m]$. For presentation clarity, the following theorem focuses on the case $\eps_j=O(1)$ for all $j\in[m]$.

\begin{theorem}\label{thm:lower-bound}
Suppose $X_i^{(j)}\stackrel{{\rm i.i.d.}}{\sim} N(0, \Sigma)$, $p\geq 2r$,  and $\max_{j\in[m]} \eps_j\leq C_0$ for some large absolute constant $C_0>0$.  
There exist absolute constants $c_0, c_1>0$ such that if $\big(rp+\sqrt{rpn_j}\big)\delta_j^{0.9}\leq c_1n_j\eps_j^2$ for all $j\in[m]$, then 
\begin{equation}\label{eq:thm-lwb1}
\begin{aligned}
\inf_{\hat U \in \mcU(\n, \beps,\bdelta)}\sup_{\Sigma\in\Theta(\lambda,\sigma^2)}&\EE\fro{\hat U\hat U^\top - UU^{\top}}^2 \geq  \frac{c_0}{\sum_{j=1}^m\Psi_0^{-2}(n_j,\eps_j,\delta_j)}\bigwedge (2r),\\
\inf_{\hat \Sigma \in \mcM(\n, \beps,\bdelta)}\sup_{\Sigma\in\Theta(\lambda,\sigma^2)}&\EE\fro{\hat\Sigma - \Sigma}^2 \geq  c_0\left(\frac{\lambda^2}{\sum_{j=1}^m\Psi_0^{-2}(n_j,\eps_j,\delta_j)}+\frac{\lambda^2}{\sum_{j=1}^m\Psi_1^{-2}(n_j,\eps_j,\delta_j)}\right)\bigwedge (2r\lambda^2).
\end{aligned}
\end{equation}
\end{theorem}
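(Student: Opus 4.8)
The plan is to prove both bounds in \eqref{eq:thm-lwb1} by a Bayesian argument built on our matrix version of van Trees' inequality, applied to two parametric sub-families of $\Theta(\lambda,\sigma^2)$: one that moves only the eigenspace $U$ (driving the $1/\sum_j\Psi_0^{-2}$ rate, and, rescaled by $\lambda^2$, the $\lambda^2/\sum_j\Psi_0^{-2}$ term of the covariance bound) and one that moves only the eigenvalues $\Lambda$ (driving the $\lambda^2/\sum_j\Psi_1^{-2}$ term). The harmonic-mean structure $1/\sum_j\Psi_\bullet^{-2}$ will emerge because the $m$ local messages are conditionally independent given the parameter — the local datasets are disjoint and i.i.d. — so their Fisher informations add, while each client's Fisher information is simultaneously capped by a \emph{statistical} term of order $n_j$ and a \emph{privacy-limited} term of order $n_j^2\eps_j^2$.

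\textbf{Reduction and van Trees.} For the eigenspace family I would fix $\Lambda=\lambda I_r$ and $\sigma^2$ and parametrize $U$ via the Stiefel chart $U(V)=(U_0+U_{0\perp}V)(I_r+V^{\top}V)^{-1/2}$, $\theta=\vec(V)$ in a Euclidean ball of radius $\rho$; since $p\ge 2r$ the dimension is $d=(p-r)r\asymp pr$, every $\Sigma(\theta)$ stays in a fixed compact subset of $\Theta(\lambda,\sigma^2)$, and $\fro{U(\theta)U(\theta)^{\top}-U(\theta')U(\theta')^{\top}}^2\asymp\|\theta-\theta'\|_2^2$ there; moreover within this family $\fro{\Sigma(\theta)-\Sigma(\theta')}=\lambda\fro{U(\theta)U(\theta)^{\top}-U(\theta')U(\theta')^{\top}}$ exactly, which transfers the projector lower bound to a $\lambda^2$-scaled covariance lower bound after a post-processing reduction. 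For the eigenvalue family I would instead fix $U=U_0$ and perturb $\Lambda\mapsto\lambda I_r+M(\mu)$ over symmetric $r\times r$ matrices of small operator norm, so that $d\asymp r^2$, $\Sigma(\mu)\in\Theta(\lambda,\sigma^2)$, and $\fro{\Sigma(\mu)-\Sigma(\mu')}\asymp\|\mu-\mu'\|_2$. Equipping each parameter with a smooth compactly supported prior $\pi$ on its ball with $\widetilde I(\pi)\preceq(C_\pi/\rho^2)I_d$ and applying the matrix van Trees inequality to the identity functional gives, for any federated $(\beps,\bdelta)$-DP estimator, a Bayes risk $\gtrsim\min\{\rho^2,\ d^2/\tr\,\EE_\pi[I_{\rm tot}(\theta)]\}$ with $I_{\rm tot}=\sum_{j=1}^m I_{M_j}$ (additivity across clients, and across any interaction rounds, by independence of the local datasets). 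Choosing $\rho^2\asymp r$ (resp. $\rho^2\asymp r\lambda^2$) supplies the trivial cap $\wedge(2r)$ (resp. $\wedge(2r\lambda^2)$), and the covariance bound follows by adding the eigenvalue contribution to the $\lambda^2$-scaled eigenspace contribution (a maximum of two quantities is at least half their sum).

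\textbf{Per-client Fisher information under DP — the crux.} Everything then reduces to showing, uniformly over $\theta$ in the support of $\pi$, that $\tr\,I_{M_j}(\theta)\lesssim\frac{\lambda^2}{\sigma^2(\lambda+\sigma^2)}\min\{n_j\,pr,\ n_j^2\eps_j^2\}$ for the eigenspace family and $\tr\,I_{M_j}(\theta)\lesssim\frac{1}{\lambda^2}\min\{n_j\,r^2,\ n_j^2\eps_j^2\}$ for the eigenvalue family — plugging these into the van Trees bound produces exactly $1/\sum_j\Psi_0^{-2}$ and $\lambda^2/\sum_j\Psi_1^{-2}$. The statistical branch is immediate: $I_{M_j}(\theta)\preceq I_{\calD_j}(\theta)=n_jI_1(\theta)$, and the Gaussian Fisher information of $N(0,\Sigma(\theta))$ has per-coordinate size $\asymp\lambda^2/(\sigma^2(\lambda+\sigma^2))$ (resp. $\asymp1/\lambda^2$) because of the spectral gap of $\lambda$ against $\sigma^2$. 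The privacy branch is the real obstacle: the generic $(\eps_j,\delta_j)$-DP Fisher-information inequality bounds $I_{M_j}$ by $\asymp n_j^2\eps_j^2$ times the \emph{squared range} of a single-sample score, which is infinite for Gaussian data since a neighbouring dataset may replace one sample by an arbitrary vector. The fix I would carry out is to couple $\calD_j$ with a truncated dataset supported on a ball carrying all but a $\delta_j^{\,0.9}$-order fraction of the mass, apply the DP Fisher bound to the truncated score (whose range scales like $\sqrt{\log(1/\delta_j)}$ times the natural $L^2$ scale, giving per-coordinate size $\asymp pr$, resp. $\asymp r^2$), and absorb the truncation remainder; when summed over the $d\asymp pr$ (resp. $d\asymp r^2$) coordinates this remainder is controlled precisely under the hypothesis $(rp+\sqrt{rpn_j})\delta_j^{0.9}\le c_1 n_j\eps_j^2$, while the restriction $\max_j\eps_j=O(1)$ keeps the DP bound in its clean $n_j^2\eps_j^2$ form. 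Making this truncation estimate quantitatively sharp — so that the privacy-limited information is $\asymp n_j^2\eps_j^2$ with the right $p,r,n_j,\delta_j$ dependence and the resulting lower bound matches the Algorithm~\ref{alg:dpfedpca} upper bound up to logarithmic factors — is the delicate part of the argument.
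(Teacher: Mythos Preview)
Your overall architecture---van Trees applied to an eigenspace sub-family and a separate eigenvalue sub-family, with each client's Fisher information simultaneously capped by a statistical term $\asymp n_j$ and a privacy term $\asymp n_j^2\eps_j^2$ so that the harmonic mean $1/\sum_j\Psi_\bullet^{-2}$ falls out---matches the paper. The execution, however, differs in every technical component, and at one point your sketch invokes a tool that does not exist in the form you state.

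\textbf{Parametrization.} You work in a local Stiefel chart with a compactly supported prior. The paper instead puts a \emph{global} standard Gaussian prior on $\Theta\in\RR^{p\times r}$ and pushes forward through $\psi(\Theta)=\Theta(\Theta^\top\Theta)^{-1}\Theta^\top$; then $W=\Theta^\top\Theta\sim\calW_r(I_r,p)$ and every ingredient of the van Trees bound (numerator $\int\tr(\nabla\psi\circ\nabla\psi^*)p(\Theta)d\Theta$, prior penalty $\calJ(p)$, and the Fisher terms) reduces to inverse-Wishart moments computed in closed form. For the eigenvalue family the paper does \emph{not} perturb $\Lambda$ directly but embeds a smaller PCA problem $\Theta_2$ (an $r\times(r/2)$ spiked model with signal and noise both $\lambda+\sigma^2$) inside $\Theta(\lambda,\sigma^2)$ and recycles the already-proved eigenspace bound with $(p,r,\lambda,\sigma^2)\leftarrow(r,r/2,\lambda+\sigma^2,\lambda+\sigma^2)$.

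\textbf{The DP Fisher bound.} This is where your route departs most, and where there is a gap. You appeal to a ``generic $(\eps_j,\delta_j)$-DP Fisher-information inequality'' bounding $I_{M_j}$ by $n_j^2\eps_j^2$ times the squared range of a single-sample score, to be applied after truncating the data. No such inequality is available off the shelf for approximate DP; the pure-DP score-attack bounds do not carry over when $\delta>0$, and a truncation-plus-coupling proof would itself have to reproduce most of the work. The paper's mechanism is different and more direct: write $\tr(\nabla\psi\circ\calI(\Theta\,|\,\hat U_j)\circ\nabla\psi^*)=\sum_{i=1}^{n_j}\EE\,G_i^{(j)}$ where $G_i^{(j)}$ is a score inner product singling out sample $i$; apply the $(\eps_j,\delta_j)$-DP definition \emph{to the tail of $G_i^{(j)}$ itself}, replacing $X_i^{(j)}$ by an independent copy to get $\tilde G_i^{(j)}$ with $\EE\tilde G_i^{(j)}=0$; what remains is $C\eps_j\EE|\tilde G_i^{(j)}|$ plus a $\delta_j$-residual. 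Summing over $i$ and using Cauchy--Schwarz turns this into a quadratic inequality in $(\tr(\nabla\psi\circ\calI\circ\nabla\psi^*))^{1/2}$ whose solution is the $n_j^2\eps_j^2$ cap. The $\delta_j$-residual is handled by Markov's inequality at the $k$-th moment of $G_i^{(j)}$ (computed explicitly via Gaussian/Wishart calculus), and the theorem's hypothesis $(rp+\sqrt{rpn_j})\delta_j^{(k-1)/k}\le c_1 n_j\eps_j^2$---the exponent $0.9$ is $k=10$---is exactly what makes that residual negligible. No data truncation enters anywhere.
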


In the special case of  $m=1$, the bound (\ref{eq:thm-lwb1}) matches the lower bound for differentially private PCA established in \cite{cai2024optimalPCA}. Theorem~\ref{thm:lower-bound} shows that the minimax lower bound in federated PCA is the harmonic mean of the minimax lower bounds at each local client. The technical tool in \cite{cai2024optimalPCA} is a differentially private version of Fano's lemma, which imposes a restricted condition on the range of allowed $\delta_j$'s. In contrast, Theorem~\ref{thm:lower-bound} allows a much wider range of $\delta_j$'s. We remark that the exponent $0.9$ can be replaced by $k/(k+1)$ for any positive integer $k\geq 1$.  The minimax lower bounds in Theorem~\ref{thm:lower-bound} hold as long as $\big(rp+\sqrt{rpn_j}\big)\delta_j^{1-\zeta}\leq c_1n_j\eps_j^2$ for any $\zeta\in(0,1)$. 
 
Our main technical tool for proving Theorem~\ref{thm:lower-bound} is a matrix version of Van Tree's inequality, which quantifies a lower bound for the average error rate of estimating principal components under privacy constraints. We then establish the inequality (\ref{eq:thm-lwb1}) by specifying a prior distribution over the set $\OO^{p\times r}$ and bounding the Fisher information. The detailed proof is provided in Appendix~\ref{sec:proof-lower-bound} in the supplementary materials.

\section{Numerical Experiments}\label{sec:numerical}

Our proposed algorithm, \alg, is easy to implement. In this section, we evaluate its numerical performance through simulations and demonstrate its practical utility by applying it to a lung cancer dataset. To provide a comprehensive evaluation, we also compare its performance against two alternative approaches: the equal-weight aggregation method and the \textsf{Fed-DP-Oja} algorithm \citep{grammenos2020federated, liu2022dp}.

\subsection{Simulations}
We present simulation results comparing our proposed algorithm, \alg, with existing algorithms and their variations. Specifically, we evaluate the \textsf{Fed-DP-Oja} algorithm introduced in \cite{grammenos2020federated}, which addresses federated PCA under homogeneous sample sizes and privacy constraints. Additionally, we compare our approach with an alternative aggregation method that assigns equal weights to each client. We also examine a strategy where each local client transmits $\tilde U_j\tilde U_j^\top + Z_j$ to the central server. While this method ensures privacy protection, it is not an optimal estimator of principal components, as it generally fails to qualify as a valid spectral projector and incurs additional communication costs. Nevertheless, we include the results from this approach as a \textsf{reference}. In all experiments, we set the covariance matrix to $\Sigma = \lambda UU^\top + I_p$, where $U \in \mathbb{R}^{p \times r}$ is an orthogonal matrix generated by extracting the left singular vectors of a randomly generated matrix with i.i.d. entries via QR decomposition.
Performance is assessed using the projection distance between the estimated subspace and the true subspace, defined by $\|\hat\U\hat\U^\top - \U\U^\top\|_{\mathrm{F}}$.

In the first simulation setting, we examine the utility-privacy trade-off under homogeneous conditions. We set the dimensionality to $p = 50$, rank to $r = 1$, and signal strength to $\lambda = 10$. The data are distributed across $m = 10$ clients, each with a privacy budget of $\epsilon_j \equiv \epsilon$ and $\delta_j \equiv 0.1$, and a sample size of $n_j = 10,000$. Given the homogeneous setting, the optimal choice of weights is equal weighting. Therefore, we compare our proposed method with the \textsf{Fed-DP-Oja} algorithm and the \textsf{reference} approach. The privacy budget $\epsilon$ varies between 0.1 and 1.0. For each choice of $\epsilon$, the simulation is repeated 50 times. 
The results, presented in Figure~\ref{fig:privacy-utility-eps}, demonstrate that the \textsf{Fed-DP-Oja} algorithm significantly underperforms compared to both our proposed method and the \textsf{reference}  approach.  In contrast, our method achieves performance nearly identical to the \textsf{reference}. These findings confirm that transmitting the top $r$ left singular vectors of $\tilde U_j\tilde U_j^\top + Z_j$. to the server is sufficient for effective federated PCA. Additionally, larger values of $\epsilon$ correspond to weaker privacy guarantees but result in more accurate estimations. This behavior aligns with our theoretical predictions.

In the second experiment, we evaluate the estimation quality as the total number of total clients $m$ varies. We use the same parameters: $p=50, r=1, \lambda=10$. Each client is assigned a privacy budget of $\epsilon_j \equiv 0.5$ and $\delta_j \equiv 0.1$. 
We consider a homogeneous setting where each client has a sample size of  $n_j = n = 1000$ and vary the number of clients  $m\in\{10, 20,\cdots, 100\}$.
For each value of $m$, the simulation is repeated 50 times. 
The results, depicted in Figure~\ref{fig:client}, show that our proposed method achieves performance comparable to the reference approach while significantly outperforming the \textsf{Fed-DP-Oja} algorithm. Furthermore, as the number of clients increases, the estimation accuracy improves. These findings indicate that our method effectively leverages information from multiple clients, enhancing the quality of the estimated principal components as the client population grows.

In the third experiment, we investigate the effect of varying the number of clients $m$ on estimation quality while maintaining a fixed total number of samples $N$. Specifically, for each client $j$, the sample size is set to $n_j\equiv N/m$ . We configure the parameters as 
$p=50,r=1,\lambda=10$, with each client assigned a privacy budget of $\epsilon_j \equiv 0.5$ and $\delta_j \equiv 0.1$. The total sample size is fixed at $N=100,000$, and we vary the number of clients $m$ across the values $\{10,20,25,50\}$. For each configuration, we conduct 50 independent simulation runs. The results are illustrated in Figure~\ref{fig:fix_total}.
The findings indicate that, under a fixed sample complexity, a smaller number of clients leads to more accurate estimations. This occurs because fewer clients allow for larger sample sizes per client, thereby enhancing the quality of the local principal component estimates and facilitates easier privacy preservation. These results align with our theoretical predictions.

Lastly, we assess the performance of our method under heterogeneous sample sizes and privacy budgets. We set $p=50,r=1$, and $\lambda=10$, with data distributed across $m = 10$ clients. For each client, the privacy parameters $\epsilon_j, \delta_j$ are independently and uniformly drawn from (0.1,0.3) and (0.1,0.2), respectively. To introduce heterogeneity in sample sizes, we allocate a sample size of $2*\textsf{N}_{\textsf{sample}}$ to the first five clients and  $20*\textsf{N}_{\textsf{sample}}$ to the remaining five clients, where $\textsf{N}_{\textsf{sample}}\in\{100, 200, \cdots, 1000\}$. The results are presented in Figure~\ref{fig:hetero}.
Our proposed method outperforms the equal weight aggregation approach and even the \textsf{reference} method. This superior performance is attributed to our method's ability to optimally weight clients based on their individual sample sizes and privacy budgets, thereby effectively balancing the trade-offs inherent in a heterogeneous setting. In contrast, the reference method does not account for such heterogeneity in its weighting scheme, resulting in less efficient estimation.

\begin{figure}[htbp]
	\centering
	\begin{subfigure}{0.45\textwidth}
		\centering
		\includegraphics[width=\textwidth]{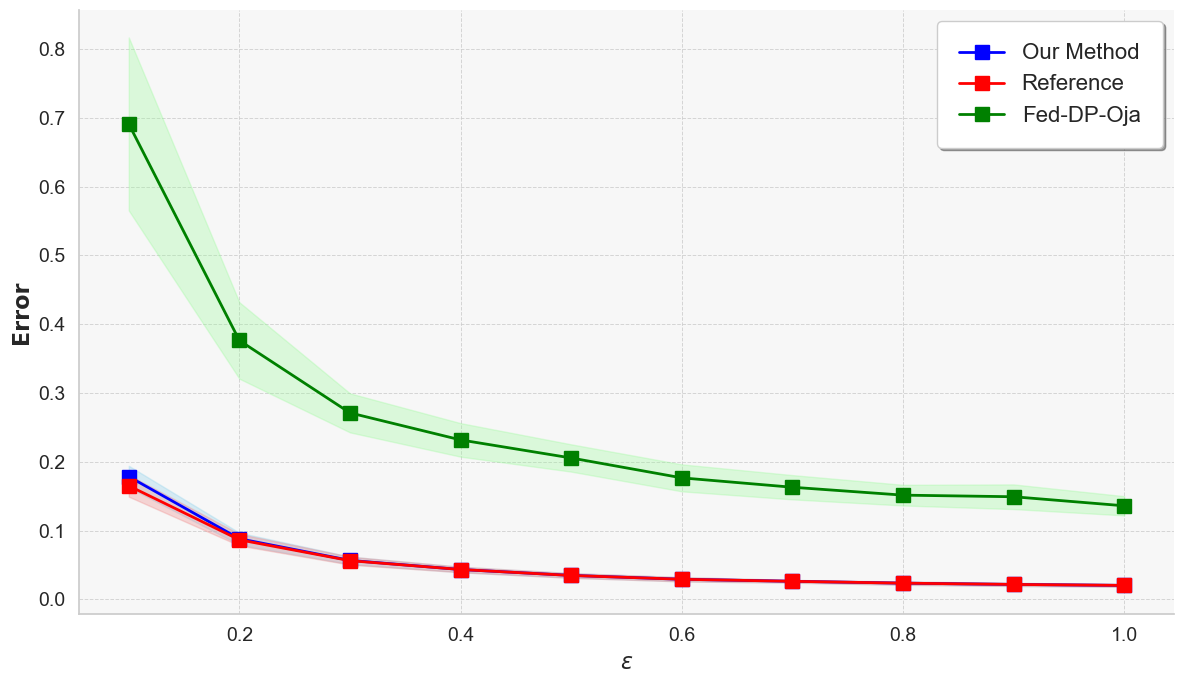}
		\caption{Privacy-utility trade-off}
		\label{fig:privacy-utility-eps}
	\end{subfigure}
	\hfill
	\begin{subfigure}{0.45\textwidth}
		\centering
		\includegraphics[width=\textwidth]{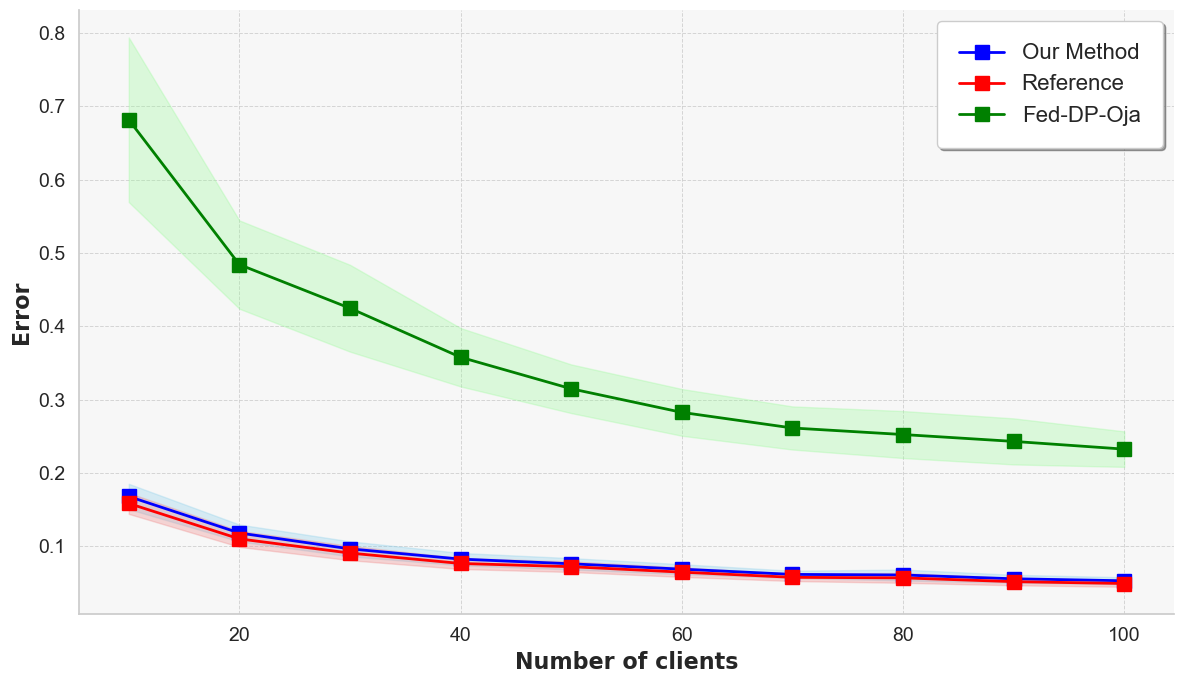}
		\caption{Estimation error versus number of clients}
		\label{fig:client}
	\end{subfigure}
	
	\begin{subfigure}{0.45\textwidth}
		\centering
		\includegraphics[width=\textwidth]{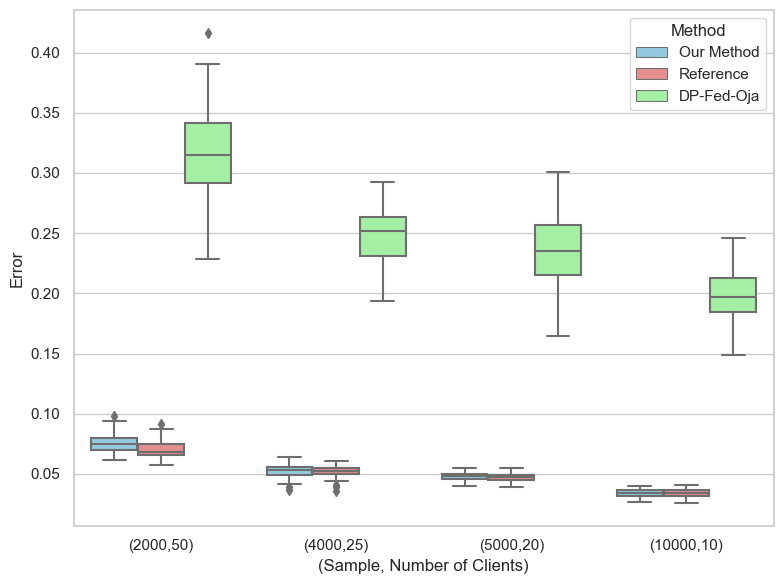}
		\caption{Estimation error under fixed total sample size}
		\label{fig:fix_total}
	\end{subfigure}
	\hfill
	\begin{subfigure}{0.45\textwidth}
		\centering
		\includegraphics[width=\textwidth]{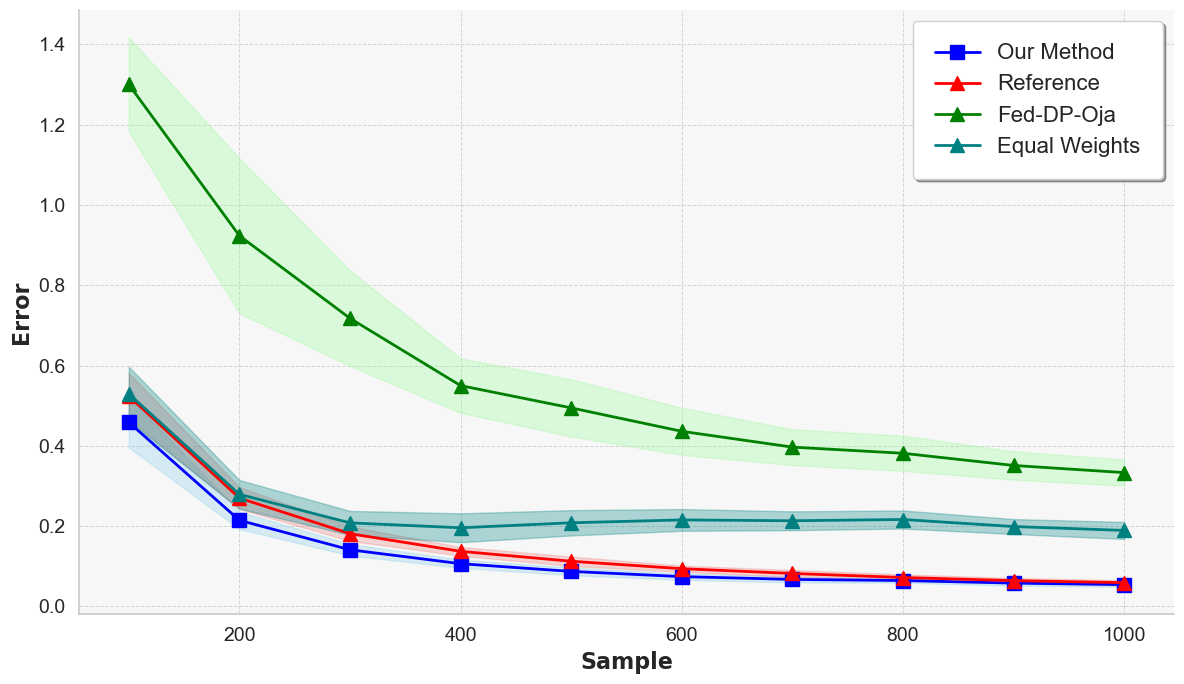}
		\caption{Heterogeneous sample sizes and privacy budgets}
		\label{fig:hetero}
	\end{subfigure}
	\caption{Numerical simulations comparing our method with existing methods and their variations. The performance is assessed using the projection distance $\fro{\hat\U\hat\U^\top - \U\U^\top}$. }
\end{figure}
	
\subsection{The Lung Cancer Data}
In this section, we illustrate the practical utility  of the proposed algorithm, \alg, by applying it to a lung cancer dataset. We also compare its performance with the equal-weight aggregation approach and the \textsf{Fed-DP-Oja} algorithm.

The Lung Cancer dataset, initially collected and cleaned by \cite{gordon2002translation}, comprises expression data for 12,533 genes across 181 subjects, categorized into diseased and normal groups. Following the refinement by \cite{jin2016influential}, genes without differential expression between the groups were excluded, resulting in a curated data matrix with dimensions  $p = 251$. 

For our experiment, we consider a federated setting with $m=2$ clients. We randomly shuffle the sample indices and assign the first 130 samples to Client 1, and the remaining 51 samples to Client 2. We set the target rank to $r=5$. For each client, the signal strength $\lambda$ is estimated by averaging the first three eigenvalues of the sample covariance matrix, and the noise variance $\sigma^2$  is estimated as the mean of the 51st to 251st sample eigenvalues. Both clients are allocated identical privacy budgets of 
$\epsilon=0.4$ and $\delta=0.1$.  

Subsequently, each client computes a differentially private subspace estimation and transmits it, along with the corresponding unnormalized weights, to a central server for aggregation. We compare the performance of our method with that of the \textsf{Fed-DP-Oja} algorithm and the equal-weight aggregation approach. After aggregation, we perform dimensionality reduction using the estimated subspace at the central server and report the explained variance as the evaluation metric. The results are illustrated in Figures \ref{fig:compare}, \ref{fig:equalweight}, and \ref{fig:ours}.
The experimental outcomes indicate that our proposed method outperforms the \textsf{Fed-DP-Oja} algorithm, which requires the addition of excessively large noise, thereby degrading its performance. Moreover, when compared to the equal-weight aggregation approach, our method achieves a higher explained variance, demonstrating its superior ability to capture the underlying data structure effectively. These results underscore the efficacy of our method in balancing privacy constraints with estimation accuracy in a federated learning environment.

\begin{figure}[htbp]
	\centering
	\begin{subfigure}[b]{0.49\textwidth}  
		\centering
		\includegraphics[width=\textwidth]{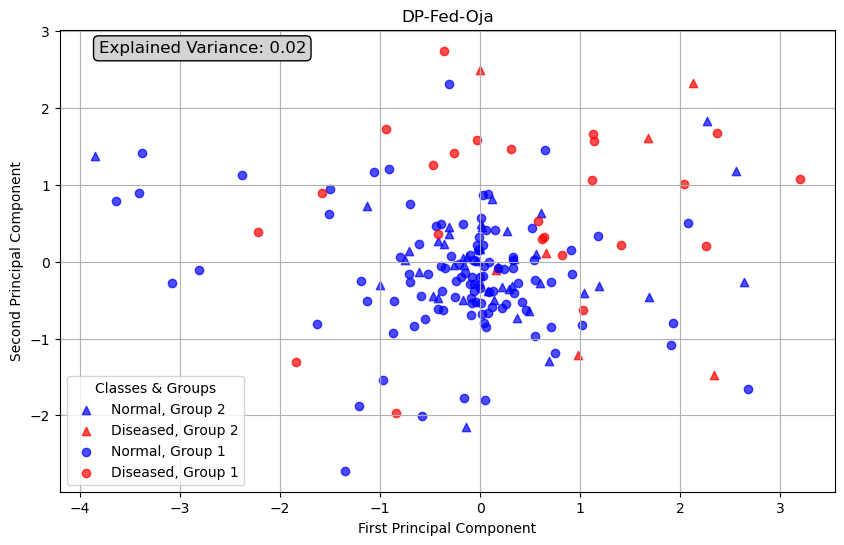}  
		\caption{\textsf{Fed-DP-Oja}}
		\label{fig:compare}
	\end{subfigure}
		\hfill  
	\begin{subfigure}[b]{0.49\textwidth}  
		\centering
		\includegraphics[width=\textwidth]{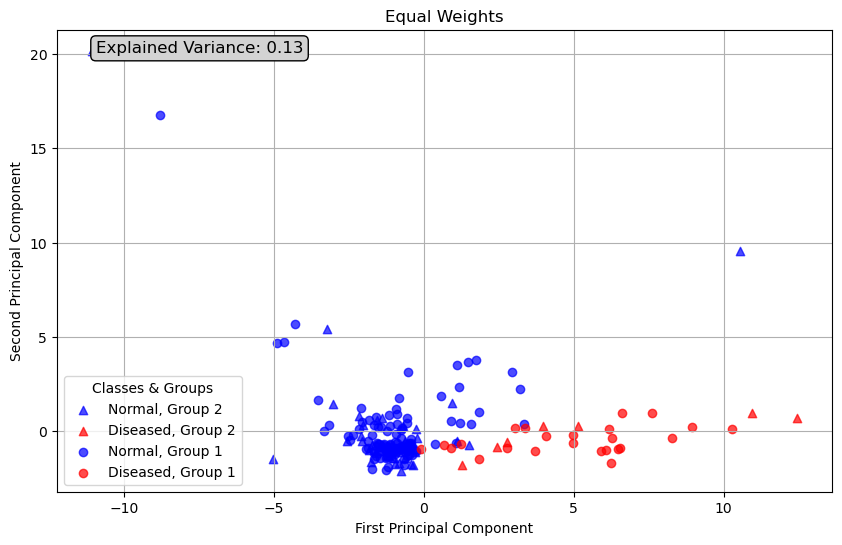}  
		\caption{Using equal weights}
		\label{fig:equalweight}
	\end{subfigure}
	
	\vskip\baselineskip  
	\begin{subfigure}[t]{0.5\textwidth}
		\centering
		\includegraphics[width=\linewidth]{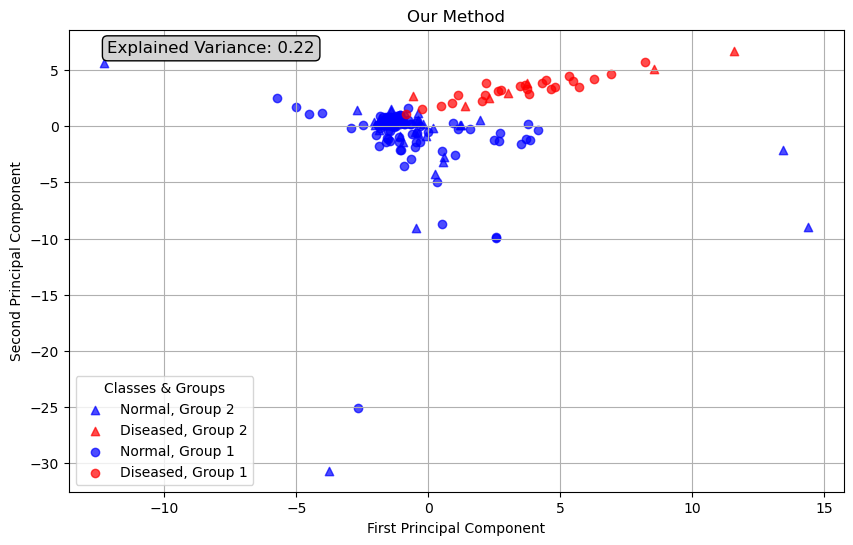}
		\caption{Our method}
		\label{fig:ours}
	\end{subfigure}
	\caption{We compare our proposed method with the equal-weight aggregation approach and the \textsf{Fed-DP-Oja} algorithm using the Lung Cancer dataset. For simplicity, the entire dataset is unevenly divided between two clients, each allocated a privacy budget of $\epsilon = 0.4,\delta= 0.1$.}
\end{figure}

\section{Discussions}
This paper establishes minimax optimal rates and demonstrates the multiple robustness and scalability of federated PCA. The central server's estimator remains consistent as long as at least one local estimator is consistent. Moreover, even if all local estimators are inconsistent, the central estimator can still be consistent given a sufficient number of local clients. These findings highlight federated learning's potential for reliable and robust statistical inference in a privacy-preserving manner, paving the way for further research and application in fields requiring stringent privacy and data security measures. 

For simplicity, we assume in this paper that the mean vector of the data distribution is either zero or known. However, the approach can be readily adapted to handle cases where the mean is unknown. In such instances, we calculate the client-specific sample covariance matrix as   $\widehat \Sigma_j=(n_j-1)^{-1}\sum_{i=1}^{n_j}\big(X_i^{(j)}-\bar X^{(j)}\big)\big(X_i^{(j)}-\bar X^{(j)}\big)^{\top}$, where $\bar{X}^{(j)}$ denotes the sample mean vector for the $j$-th local client. Under the Gaussian assumption, the distribution of $(n_j - 1)\widehat{\Sigma}_j$ remains Wishart, which preserves the validity of all technical proofs presented in this work, except that the sensitivity of empirical spectral projectors and eigenvalues need to be carefully re-examined. For analytical convenience, we assume a Gaussian data distribution throughout our study. Extending these results to sub-Gaussian or more general distributions is an intriguing avenue for future research. Nevertheless, as highlighted earlier, the main technical challenges lie in developing a unified framework to bound higher-order perturbation terms that arise from the three layers of spectral decomposition.

An interesting future research direction is the study of federated SVD under the low-rank matrix denoising model.  While SVD and PCA are closely related in traditional settings, they differ significantly in the context of federated learning under DP constraints due to differences in measurement units. Specifically, the covariance matrix is symmetric, whereas the low-rank signal in the matrix denoising model can have dimensions that differ drastically \citep{cai2018rate}. This introduces additional challenges and unique features when investigating minimax optimal rates for estimating the left and right singular subspaces under distributed differential privacy constraints. Nonetheless, we believe the multiple robustness phenomenon observed in federated PCA also applies to federated SVD, with the minimax optimal rate at the central server being the harmonic mean of the minimax optimal rates achievable at each local client.
	
Additionally, it is worthwhile to explore the minimax optimal rates in federated sparse PCA \citep{cai2013sparse} and tensor PCA \citep{zhang2018tensor} under privacy constraints. These problems often rely on iterative algorithms, making the development of sharp upper bounds technically challenging. Moreover, these settings are known to exhibit a statistical-to-computational gap even without privacy constraints. Understanding the interplay between privacy constraints and computational feasibility in these problems remains an open and important research problem.

\section{Acknowledgment}
Tony Cai's research was supported in part by NSF grant DMS-2413106 and NIH grants R01-GM123056 and R01-GM129781. Dong Xia's research was partially supported by Hong Kong RGC Grant GRF 16302323 and 16303224. Anru R. Zhang's research was partially supported by NSF Grant CAREER-2203741 and NIH Grants R01HL169347 and R01HL168940.

\bibliographystyle{plainnat}
\bibliography{reference.bib}

\newpage
\appendix

\section{Proofs}




\subsection{Proof of Theorem \ref{thm:highprob:upperbound}}

We first derive the upper bound for the expectation, and then derive the high probability upper bound. In the proof, we set $\lambda_{\max} = \lambda_1$ and $\lambda_{\min} =\lambda_r$. 

\hspace{1cm}

\noindent\textbf{Upper bound for expectation. }
We now derive the upper bound for $\EE\fro{\hat\U\hat\U^\top - \U\U^\top}^2$. 
We will first expand $\fro{\hat\U\hat\U^\top - \U\U^\top}^2$. 
Denote $\bDel = \sum_{j=1}^m w_j\hat\U_j\hat\U_j^\top - \U\U^\top =: \sum_{j=1}^m w_j\bDel_j$. 
We define the event $\calF_0 = \{\op{\sum_{j=1}^m w_j\hat\U_j\hat\U_j^\top - \U\U^\top}\leq 1/4\}$ and we will show shortly from \eqref{UDU}, $\calF_0$ holds with high probability. Notice that the columns of $\hat\U$ are the top $r$ left singular vectors of $\sum_{j=1}^m w_j\hat\U_j\hat\U_j^\top$, we can use the representation formula developed in \cite{xia2021normal} to show the following expansion holds under $\calF_0$:
\begin{align*}
	\hat\U\hat\U^\top - \U\U^\top = \sum_{l\geq 1}\calS_{\U\U^\top,l}(\bDel).
\end{align*} 
Here $\calS_{\U\U^\top,l}(\bDel)$ takes the following form:
\begin{align*}
	\calS_{\U\U^\top,l}(\bDel) = \sum_{\s\in\SS_l}(-1)^{\lzero{\s} + 1}\M(s_1)\underline{\M(s_1)^\top\bDel \M(s_2)}\cdots\underline{\M(s_l)^\top\bDel \M(s_{l+1})}\M(s_{l+1})^\top,
\end{align*}
where $\M(s)$ is a matrix-valued function, such that $\M(0) = \U_{\perp}$ and $\M(s) = \U$ for $s>0$, and $$\SS_l = \{(s_1,\cdots, s_{l+1}): s_1,\cdots,s_{l+1}\geq 0, s_1+\cdots + s_{l+1}=l\}.$$
Under $\calF_0$, we can expand $\fro{\hat\U\hat\U^\top - \U\U^\top}^2$ as 
\begin{align*}
	\frac{1}{2}\fro{\hat\U\hat\U^\top - \U\U^\top}^2 &= r - \inp{\hat\U\hat\U^\top}{\U\U^\top} = - \inp{\hat\U\hat\U^\top - \U\U^\top}{\U\U^\top}\\
	&= -\sum_{l\geq 2}\inp{\calS_{\U\U^\top,l}(\bDel)}{\U\U^\top}.
\end{align*}
Plug in the expression for $\calS_{\U\U^\top,l}(\bDel)$, and we have the following expansion under $\calF_0$:
\begin{align}\label{eq}
	&\quad \frac{1}{2}\fro{\hat\U\hat\U^\top - \U\U^\top}^2\notag\\
	&=  \sum_{l\geq 2}\sum_{\s\in\SS_l}(-1)^{\lzero{\s}}\inp{\M(s_1)\underline{\M(s_1)^\top\bDel \M(s_2)}\cdots\underline{\M(s_l)^\top\bDel \M(s_{l+1})}\M(s_{l+1})^\top}{\U\U^\top}\notag\\
	&= \sum_{l\geq 2}\sum_{\s\in\SS_l}(-1)^{\lzero{\s}}\sum_{j_1,\cdots,j_l\in[m]}w_{j_1}\cdots w_{j_l}\cdot\tr(\U^\top\M(s_1)\underline{\M(s_1)^\top\bDel_{j_1} \M(s_2)}\notag\\
	&\hspace{7cm}\cdots\underline{\M(s_l)^\top\bDel_{j_l} \M(s_{l+1})}\M(s_{l+1})^\top\U).
\end{align}
Here the first equality holds due to $\inp{\calS_{\U\U^\top,1}(\bDel)}{\U\U^\top} = 0$. 
Recall $\bDel_j = \hat\U_j\hat\U_j^\top-\U\U^\top$. 
Notice $\hat\U_j$ is the top $r$ left singular vectors of $\tilde \U_j\tilde\U_j^\top +\Z_j$, and $\U$ is the top $r$ left singular vectors of $\U\U^\top$. We denote $\D_j = \tilde \U_j\tilde\U_j^\top - \U\U^\top +\Z_j$, 
then
\begin{align}\label{Deltaj}
	\bDel_j = \hat\U_j\hat\U_j^\top-\U\U^\top = \sum_{l\geq 1}\calS_{\U\U^\top, l}(\D_j),
\end{align}
and 
\begin{align*}
	\calS_{\U\U^\top, l}(\D_j) = \sum_{\s\in\SS_l}(-1)^{\lzero{\s} + 1}\M(s_1)\cdot \underline{\M(s_1)^\top\D_{j}\M(s_2)}\cdots \underline{\M(s_l)^\top\D_j\M(s_{l+1})}\cdot \M(s_{l+1})^\top.
\end{align*}
Therefore
\begin{align}\label{bDel}
	\bDel_j =\sum_{l\geq1}\sum_{\s\in\SS_l}(-1)^{\lzero{\s} + 1}\M(s_1)\cdot \underline{\M(s_1)^\top\D_{j}\M(s_2)}\cdots \underline{\M(s_l)^\top\D_j\M(s_{l+1})}\cdot \M(s_{l+1})^\top.
\end{align}

Since $\D_j = \tilde\U_j\tilde\U_j^\top - \U\U^\top + \Z_j$, we consider the expression for $\tilde\U_j\tilde\U_j^\top - \U\U^\top$. 
Consider the event $\calE_0^{(j)} = \{\op{\hat\bSigma_j - \bSigma}\leq \lambda_{\min}/4\}$. 
Then from Lemma \ref{lemma:covariance}, we have $\PP(\calE_0^{(j)})\geq 1-e^{-p\wedge n_j}$. We denote $\bXi_j = \hat\bSigma_j - \bSigma$, then under $\calE_0^{(j)}$, we have the following expansion under the event $\calE_0^{(j)}$:
\begin{align*}
	\tilde\U_j\tilde\U_j^\top - \U\U^\top= \sum_{l\geq 1}\calS_{\U\bLa\U^\top, l}(\bXi_j),
\end{align*}
where 
\begin{align}\label{def:Sl}
	&\quad\calS_{\U\bLa\U^\top, l}(\bXi_j) \notag\\
	&= \sum_{s\in\SS_l} (-1)^{\lzero{\s} + 1}\M(s_1)\bLa^{-s_1}\underline{\M(s_1)^\top\bXi_j\M(s_2)}\bLa^{-s_2}\cdots\bLa^{-s_l}\underline{\M(s_l)^\top\bXi_j\M(s_{l+1})}\bLa^{-s_{l+1}}\M(s_{l+1})^\top,
\end{align}
here we denote $\bLa^{-0} = \I_{p-r}$ with slight abuse of notation. We denote $g_i^{(j)} = \U^\top X_i^{(j)}$ and $h_i^{(j)} = \U_{\perp}^\top X_i^{(j)}$. Then
$$g_i^{(j)} \sim N(0,\bLa + \sigma^2 \I_r), \quad h_i^{(j)}\sim N(0,\sigma^2\I_{p-r}).$$ 
We define the matrix $\G^{(j)}\in\RR^{r \times n_j}$, and $\H^{(j)}\in\RR^{(p-r)\times n_j}$ as 
\begin{align*}
	\G^{(j)} = [g_1^{(j)}, \cdots g_{n_j}^{(j)}], \quad \H^{(j)} = [h_1^{(j)}, \cdots h_{n_j}^{(j)}]. 
\end{align*}
Then, $\G^{(j)}$ and $\H^{(j)}$ are independent. We also have
\begin{align}\label{eq:MXiM}
	\M(s_1)^\top\bXi_j\M(s_2) = \begin{cases}
		\frac{1}{n_j}\H^{(j)}\H^{(j)\top} -\sigma^2\I_{d-r}, & \text{if } s_1=s_2 =0, \\
		\frac{1}{n_j}\H^{(j)}\G^{(j)\top}& \text{if } s_1=0, s_2>0, \\
		\frac{1}{n_j}\G^{(j)}\H^{(j)\top}&\text{if } s_1>0,s_2=0,\\
		\frac{1}{n_j}\G^{(j)}\G^{(j)\top} - (\bLa + \sigma^2 \I_r)&\text{if } s_1,s_2>0.
	\end{cases}
\end{align}

Notice $\Z_j$ is a Gaussian orthogonal ensemble (GOE), and thus is invariant to orthogonal conjugation. Therefore, $\U^\top\Z_j\U, \U_{\perp}^\top\Z_j\U, \U_{\perp}^\top\Z_j\U_{\perp}$ are independent. We will in the following denote 
\begin{align}\label{UZU}
	\mat{\Z_{j,1}&\Z_{j,2}\\ \Z_{j,2}^\top&\Z_{j,3}} = \mat{\U^\top\Z_j\U& \U^\top\Z_j\U_{\perp}\\ \U_{\perp}^\top\Z_j\U &  \U_{\perp}^\top\Z_j\U_{\perp}}. 
\end{align}
We recap the observations so far in the following diagram. 
\begin{figure}[H]
	\centering
	\begin{tikzpicture}
		\coordinate (A) at (-6,0) {};
		\coordinate (B) at ( 6,0) {};
		\coordinate (C) at (0,6) {};
		\draw[name path=AC] (A) -- (C);
		\draw[name path=BC] (B) -- (C);
		\coordinate (D) at (-4,2.4) {};
		\coordinate (E) at (-3.2,2.4) {};
		\draw[->] (D) -- (E);
		\foreach \y/\A in {
			0/$\G^{(j)}\quad\H^{(j)}$,
			1/$\U^\top\bXi_j\U \quad \U^\top\bXi_j\U_{\perp}\quad \U_{\perp}^\top\bXi_j\U_{\perp}$,
			2/$\U^\top\D_j\U \quad \U^\top\D_j\U_{\perp}\quad \U_{\perp}^\top\D_j\U_{\perp}$,
			3/$\U^\top\bDel_j\U \enspace \U^\top\bDel_j\U_{\perp}  \enspace\U_{\perp}^\top\bDel_j\U_{\perp}$,
			4/$\fro{\hat\U\hat\U^\top - \U\U^\top}^2$
		} {
			\path[name path=horiz] (A|-0,\y) -- (B|-0,\y);
			\draw[name intersections={of=AC and horiz,by=P},
			name intersections={of=BC and horiz,by=Q}] (P) -- (Q)
			node[midway,above,align=center,text width=
			\dimexpr(6em-\y em)*5\relax] {\A};
		}
		\node at (-5.5,2.4) {$\Z_{j,1}\enspace\Z_{j,2}\enspace \Z_{j,3}$};
	\end{tikzpicture}	
	\caption{Layer by layer decomposition of $\fro{\hat\U\hat\U^\top - \U\U^\top}^2$, with the building blocks $\{\G^{(j)},\H^{(j)}, \Z_{j,1}\enspace\Z_{j,2}\enspace \Z_{j,3}\}_{j=1}^M$}
	\label{fig:pyramid}
\end{figure}
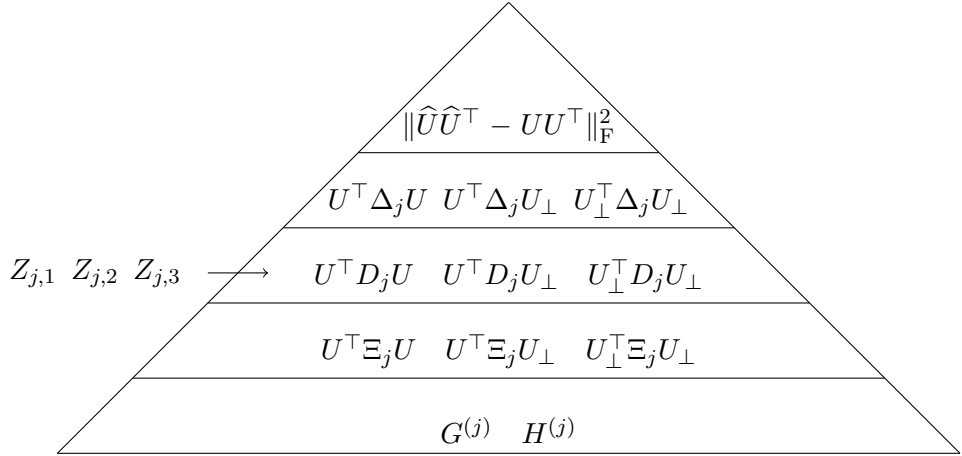

Now we analyze each terms in Figure \ref{fig:pyramid} from bottom to top. 
We first consider $\U^\top \D_j \U$:
\begin{align*}
	\U^\top \D_j \U = \U^\top (\tilde \U_j\tilde\U_j^\top - \U\U^\top )\U + \U^\top \Z_j\U. 
\end{align*}
We have
\begin{align*}
	&\quad \U^\top (\tilde \U_j\tilde\U_j^\top - \U\U^\top )\U \\
	&= \sum_{l\geq 1}\sum_{s\in\SS_l} (-1)^{\lzero{\s} + 1}\U^\top\M(s_1)\bLa^{-s_1}\underline{\M(s_1)^\top\bXi_j\M(s_2)}\bLa^{-s_2}\\
	&\hspace{8cm}\cdots\bLa^{-s_l}\underline{\M(s_l)^\top\bXi_j\M(s_{l+1})}\bLa^{-s_{l+1}}\M(s_{l+1})^\top\U.
\end{align*}
A simple fact is in each summand above, it is symmetric in both $\G^{(j)}, \H^{(j)}$. In details, we have 
\begin{align*}
	\U^\top \D_j \U = f_1(\G^{(j)}, \H^{(j)}) +\Z_{j,1},
\end{align*}
where $f_1$ is a matrix-valued function such that $f_1(\G^{(j)}, \H^{(j)}) = f_1(-\G^{(j)}, \H^{(j)}) =f_1(\G^{(j)}, -\H^{(j)})$.
And we can similarly show 
\begin{align*}
	\U_{\perp}^\top \D_j \U_{\perp} = f_3(\G^{(j)}, \H^{(j)}) +\Z_{j,3},
\end{align*}
for some $f_3$ such that $f_3(\G^{(j)}, \H^{(j)}) = f_3(-\G^{(j)}, \H^{(j)}) =f_3(\G^{(j)}, -\H^{(j)})$.

For $\U^\top \D_j \U_{\perp}$, we have 
\begin{align*}
	\U^\top \D_j \U_{\perp} = f_2(\G^{(j)}, \H^{(j)}) +\Z_{j,2},
\end{align*}
where $-f_2(\G^{(j)}, \H^{(j)}) = f_2(-\G^{(j)}, \H^{(j)}) =f_2(\G^{(j)}, -\H^{(j)})$.

Notice $\U^\top \D_j \U$, $\U^\top \D_j \U_{\perp}$, and $\U_{\perp}^\top \D_j \U_{\perp}$ are the building blocks for $\bDel_j$, we have 
\begin{align*}
	\U^\top\bDel_j\U = g_1(\G^{(j)}, \H^{(j)}, \Z_{j,1},\Z_{j,2},\Z_{j,3}),
\end{align*}
where $g_1$ is an even function in both $\G^{(j)}, \H^{(j)}, \Z_{j,2}$. 
Similarly, 
\begin{align}\label{eqg2}
	\U^\top\bDel_j\U_{\perp}^\top = g_2(\G^{(j)}, \H^{(j)}, \Z_{j,1},\Z_{j,2},\Z_{j,3}),
\end{align}
where $g_2$ is an odd function in both $\G^{(j)}, \H^{(j)}, \Z_{j,2}$, and 
\begin{align*}
	\U_{\perp}^\top\bDel_j\U_{\perp}^\top = g_3(\G^{(j)}, \H^{(j)}, \Z_{j,1},\Z_{j,2},\Z_{j,3}),
\end{align*}
where $g_3$ is an even function in both $\G^{(j)}, \H^{(j)}, \Z_{j,2}$.

\hspace{1cm}

\noindent\textit{Upper bounds for $\op{\U^\top \bXi_j \U}, \op{\U^\top \bXi_j \U_{\perp}}, \op{\U_{\perp}^\top \bXi_j \U_{\perp}}$.}
We denote the normalized versions of $g_i^{(j)}, h_i^{(j)}$ as 
\begin{align*}
	\bar g_i^{(j)} = (\bLa + \sigma^2 \I_r)^{-1/2}, \quad \bar h_i^{(j)} = \sigma^{-1}h_i^{(j)},
\end{align*}
and 
\begin{align*}
	\bar\G^{(j)} = [\bar g_1^{(j)}, \cdots \bar g_{n_j}^{(j)}], \quad \bar \H^{(j)} = [\bar h_1^{(j)}, \cdots \bar h_{n_j}^{(j)}]. 
\end{align*}
Then we have 
\begin{align*}
	\U^\top\bXi_j\U_{\perp} &= \frac{\sigma}{n_j}(\bLa^{1/2} + \sigma \I_r)\bar\G^{(j)}\bar\H^{(j)\top}.
\end{align*}
Standard $\epsilon$-net argument shows with probability exceeding $1-2e^{-p}-2e^{-n_j}$,
\begin{align*}
	\op{\U^\top\bXi_j\U_{\perp}} \lesssim (\lambda_{\max}^{1/2}  +\sigma)\sigma \sqrt{\frac{p-r}{n_j}}.
\end{align*}
For $\U^\top\bXi_j\U$, we have 
\begin{align*}
	\U^\top\bXi_j\U &= \frac{1}{n_j}(\bLa^{1/2} + \sigma \I_r)\bar\G^{(j)}\bar\G^{(j)\top}(\bLa^{1/2} + \sigma \I_r) - (\bLa + \sigma^2 \I_r).
\end{align*}
And with probability exceeding $1-2e^{-\eta_j}$,
\begin{align*}
	\op{\U^\top\bXi_j\U} \lesssim  (\lambda_{\max} +\sigma^2)\frac{\sqrt{r+\eta_j}}{\sqrt{n_j}}. 
\end{align*}
For $\U_{\perp}^\top\bXi_j\U_{\perp}$, we have
\begin{align*}
	\U_{\perp}^\top\bXi_j\U_{\perp} = \frac{\sigma^2}{n_j}\bar\H^{(j)}\bar\H^{(j)\top} -\sigma^2\I_{d-r}. 
\end{align*}
And with probability exceeding $1-2e^{-p}$, 
\begin{align*}
	\op{\U_{\perp}^\top\bXi_j\U_{\perp}} \lesssim \sigma^2\frac{\sqrt{p-r}}{\sqrt{n_j}}.
\end{align*}

We define the event 
\begin{align}\label{event1}
	\calE_1^{(j)}:=\bigg\{&\op{\U^\top\bXi_j\U_{\perp}} \lesssim (\lambda_{\max}^{1/2}  +\sigma)\sigma \sqrt{\frac{p-r}{n_j}}\bigg\}\cap \bigg\{ \op{\U^\top\bXi_j\U} \lesssim  (\lambda_{\max} +\sigma^2)\frac{\sqrt{r+\eta_j}}{\sqrt{n_j}}\bigg\}\notag\\
	&\cap\bigg\{\op{\U_{\perp}^\top\bXi_j\U_{\perp}} \lesssim \sigma^2\sqrt{\frac{p-r}{n_j}}\bigg\}. 
\end{align}
Then $\PP(\calE_1^{(j)})\geq 1 - 4e^{-p}-2e^{-n_j} - 2e^{-\eta_j}$.

\hspace{1cm}

\noindent\textit{Upper bounds for $\op{\Z_{j,1}},\op{\Z_{j,2}}$ and $\op{\Z_{j,3}}$. }
From \eqref{UZU}, we see that with probability exceeding $1-2e^{-\eta_j} - 4e^{-p}$, 
\begin{align*}
	\op{\Z_{j,1}} \lesssim \alpha_j\sqrt{r + \eta_j},
	\quad \op{\Z_{j,2}} \lesssim \alpha_j\sqrt{p},
	\quad \op{\Z_{j,3}} \lesssim \alpha_j\sqrt{p}.
\end{align*}
We define the event, 
\begin{align}\label{event2}
	\calE_2^{(j)}:=\bigg\{	\op{\Z_{j,1}} \lesssim \alpha_j\sqrt{r + \eta_j}\bigg\} \cap\bigg\{	\op{\Z_{j,2}} \lesssim \alpha_j\sqrt{p}\bigg\} \cap \bigg\{\op{\Z_{j,3}}\lesssim \alpha_j\sqrt{p}\bigg\},
\end{align}
$\calE^{(j)} = \calE_0^{(j)}\cap\calE_1^{(j)}\cap\calE_2^{(j)}$, and $\calE := \bigcap_{j=1}^m \calE^{(j)}$. Then $\PP(\calE^{(j)})\geq 1- 4e^{-\eta_j} - 10e^{-p\wedge n_j}$. 

\hspace{1cm}

\noindent\textit{Upper bounds for $\fro{\hat\U\hat\U^\top - \U\U^\top}^2\cdot \mathds{1}(\calE)$. }
Under the SNR condition, we have $\lambda_{\min}^{-1}\bigg((\lambda_{\max}^{1/2}  +\sigma)\sigma \sqrt{\frac{p}{n_j}}\bigg)\lesssim 1$, and under the event $\calE$, we have 
\begin{align*}
	\op{\U^\top \D_j \U} &\leq C^2\lambda_{\min}^{-2}\bigg((\lambda_{\max}^{1/2}  +\sigma)\sigma \sqrt{\frac{p}{n_j}}\bigg)^2\sum_{l\geq 2} 2^{-l+2} + C\alpha_j\sqrt{r+\eta_j}\\
	&\lesssim \lambda_{\min}^{-2}(\lambda_{\max}  +\sigma^2)\sigma^2 \frac{p}{n_j} + \alpha_j\sqrt{r+\eta_j},
\end{align*}
and
\begin{align*}
	\op{\U^\top \D_j \U_{\perp}} &\leq  C\lambda_{\min}^{-1}\bigg((\lambda_{\max}^{1/2}  +\sigma)\sigma \sqrt{\frac{p}{n_j}}\bigg)\sum_{l\geq 1} 2^{-l+1}+ C\alpha_j\sqrt{p}\\
	&\lesssim \lambda_{\min}^{-1}\bigg((\lambda_{\max}^{1/2}  +\sigma)\sigma \sqrt{\frac{p}{n_j}}\bigg)+\alpha_j\sqrt{p},
\end{align*}
and similarly, 
\begin{align*}
	\op{\U_{\perp}^\top \D_j \U_{\perp}} &\leq C^2\lambda_{\min}^{-2}\bigg((\lambda_{\max}^{1/2}  +\sigma)\sigma \sqrt{\frac{p}{n_j}}\bigg)^2\sum_{l\geq 2} 2^{-l+2} + C\alpha_j\sqrt{p}\\
	&\lesssim \lambda_{\min}^{-2}(\lambda_{\max}  +\sigma^2)\sigma^2 \frac{p}{n_j} + \alpha_j\sqrt{p}. 
\end{align*}
As long as $\eta_j+r\leq p$ and since $\alpha_j\sqrt{p}\lesssim 1$, we have 
\begin{align}\label{UDU}
	\max\bigg\{\op{\U^\top \D_j \U}, \op{\U^\top \D_j \U_{\perp}}, \op{\U_{\perp}^\top \D_j \U_{\perp}}\bigg\} \leq \lambda_{\min}^{-1}\bigg((\lambda_{\max}^{1/2}  +\sigma)\sigma \sqrt{\frac{p}{n_j}}\bigg)+\alpha_j\sqrt{p}. 
\end{align}
We denote the right hand side bound $u_j = \lambda_{\min}^{-1}(\lambda_{\max}^{1/2}  +\sigma)\sigma \sqrt{\frac{p}{n_j}}+\alpha_j\sqrt{p}$.
Notice under the given SNR, $\max_j u_j\leq\frac{1}{4}$. 
Together with \eqref{Deltaj}, we conclude $\op{\hat U_j\hat U_j^\top - UU^\top}\leq \frac{1}{4}$. 
This also implies $\op{\sum_{j=1}^m w_j\hat\U_j\hat\U_j^\top - \U\U^\top}\leq 1/4$. That is, $\calE$ implies $\calF_0$. 

For the terms related to $\bDel_j$, we have 
\begin{align*}
	\op{\U^\top \bDel_j \U} &\leq \sum_{l\geq2} C^l \bigg(\lambda_{\min}^{-1}\bigg((\lambda_{\max}^{1/2}  +\sigma)\sigma \sqrt{\frac{p}{n_j}}\bigg)+\alpha_j\sqrt{p}\bigg)^l\\
	&\lesssim \lambda_{\min}^{-2}(\lambda_{\max}+\sigma^2)\sigma^2 \frac{p}{n_j}+\alpha_j^2p,\\
	\op{\U^\top \bDel_j \U_{\perp}} &\lesssim \lambda_{\min}^{-1}(\lambda_{\max}^{1/2}  +\sigma)\sigma \sqrt{\frac{p}{n_j}}+\alpha_j\sqrt{p}, \\
	\op{\U_{\perp}^\top \bDel_j \U_{\perp}}&\lesssim \lambda_{\min}^{-2}(\lambda_{\max}+\sigma^2)\sigma^2 \frac{p}{n_j}+\alpha_j^2p. 
\end{align*}
Then $u_j$ is also the upper bound for $\op{\U^\top \bDel_j \U}$, $\op{\U^\top \bDel_j \U_{\perp}}$, and $\op{\U_{\perp}^\top \bDel_j \U_{\perp}}$ under $\calE$.  

Now we go back to \eqref{eq}. 
For each $\s\in\SS_l$, in order for the summand in \eqref{eq} to be non-zero, $s_1,s_{l+1}$ should be strictly greater than 0. Since $s_1+\cdots + s_{l+1} =l$, there exists $1\leq i_1<i_2\leq l$, such that 
$s_{i_1}>0, s_{i_1+1}=0$, and $s_{i_2}=0,s_{i_2+1}>0$. We define
\begin{align*}
	\II_1(\s) = \bigg\{\j\in[m]^l: j_{i_1}\neq j_{i_2}, \{j_{i_1},j_{i_2}\}\cap \{j_1,\cdots, \bar{j_{i_1}},\cdots, \bar{j_{i_2}},\cdots, j_{l}\} = \emptyset\bigg\}.
\end{align*}
Here $\bar\cdot$ means $\cdot$ is absent in the set. Then $|\II_1(\s)| = m(m-1)(m-2)^{l-2}$. 
We define the complement of $\II_1(\s)$ as $\II_2(\s) = [m]^l\backslash \II_1(\s)$. Then 
\begin{align}\label{inclusion}
	\II_2(\s) \subset \{\j\in[m]^l: j_{i_1}=j_{i_2}\} \bigcup \big(\cup_{k \neq i_1,i_2}\{\j\in[m]^l: j_{i_1}=j_{k}\}\big) \bigcup \big(\cup_{k \neq i_1,i_2}\{\j\in[m]^l: j_{i_2}=j_{k}\}\big).
\end{align}

Next we consider the upper bound for 
\begin{align*}
	\bigg|\EE\sum_{j_1,\cdots,j_l\in[m]}w_{j_1}\cdots w_{j_l} \tr(\U^\top\M(s_1)\underline{\M(s_1)^\top\bDel_{j_1} \M(s_2)}\cdots\underline{\M(s_l)^\top\bDel_{j_l} \M(s_{l+1})}\M(s_{l+1})^\top\U)\cdot\mathds{1}(\calE)\bigg|.
\end{align*}
We can split the above sum into two parts, namely 
\begin{align*}
	\sum_{j_1,\cdots,j_l\in[m]} = \sum_{\II_1(\s)} + \sum_{\II_2(\s)}. 
\end{align*}
Notice from \eqref{eqg2}, we have $\EE U^\top\bDel_{j_{i_1}}U_{\perp}\cdot \mathds{1}(\calE^{(j_{i_1})}) = 0$.
Then for all $\j\in\II_1(\s)$, we have 
\begin{align*}
	\EE \tr(\U^\top\M(s_1)\underline{\M(s_1)^\top\bDel_{j_1} \M(s_2)}\cdots\underline{\M(s_l)^\top\bDel_{j_l} \M(s_{l+1})}\M(s_{l+1})^\top\U)\cdot \mathds{1}(\calE) = 0.
\end{align*}
For each summand with index $\j\in\II_2(\s)$, using Cauchy-Schwarz inequality, we have 
\begin{align*}
	|\tr(\U^\top\M(s_1)\underline{\M(s_1)^\top\bDel_{j_1} \M(s_2)}\cdots\underline{\M(s_l)^\top\bDel_{j_l} \M(s_{l+1})}\M(s_{l+1})^\top\U)|\leq r\cdot u_{j_1}\cdots u_{j_l}. 
\end{align*}
Using these facts, we have
\begin{align*}
	&\quad\bigg|\EE\sum_{j_1,\cdots,j_l\in[m]}w_{j_1}\cdots w_{j_l} \tr(\U^\top\M(s_1)\underline{\M(s_1)^\top\bDel_{j_1} \M(s_2)}\cdots\underline{\M(s_l)^\top\bDel_{j_l} \M(s_{l+1})}\M(s_{l+1})^\top\U)\cdot\mathds{1}(\calE)\bigg|\\
	&\leq r\cdot \sum_{\j\in\II_2(\s)}u_{j_1}\cdots u_{j_l}. 
\end{align*}
Using the inclusion relation in \eqref{inclusion}, this is further upper bounded by 
\begin{align}\label{I2}
	r\cdot \bigg(\sum_{j_{i_1}=j_{i_2}} + \sum_{k\neq i_1,i_2}\sum_{j_{i_1} = j_k} + \sum_{k\neq i_1,i_2}\sum_{j_{i_2} = j_k} \bigg)u_{j_1}\cdots u_{j_l} \leq 2lr(\sum_{k=1}^mw_k^2u_k^2)(\sum_{k=1}^mw_ku_k)^{l-2}. 
\end{align}
Therefore we have 
\begin{align}\label{expectation:upper}
	\EE\fro{\hat\U\hat\U^\top - \U\U^\top}^2\cdot\mathds{1}(\calE) &\leq \sum_{l\geq 2}4^l\cdot 2lr(\sum_{k=1}^mw_k^2u_k^2)(\sum_{k=1}^mw_ku_k)^{l-2} \leq  4r(\sum_{k=1}^mw_k^2u_k^2),
\end{align}
where the last inequality is due to $u_k\leq \frac{1}{2}$. Finally, we set $w_k$ to be such that $\sum_{k=1}^mw_k^2u_k^2$ is minimized, that is $w_k\propto u_k^{-2}$.

On the other hand, we can set $\eta_j = c_0(n_j\wedge p)$, then we have 
\begin{align*}
	\EE\fro{\hat\U\hat\U^\top - \U\U^\top}^2\cdot\mathds{1}(\calE^c)\leq  2r \cdot \PP(\calE^c) \leq 28r \sum_{j=1}^m  e^{-c_0(n_j\wedge p)}. 
\end{align*}
In summary, we have 
\begin{align*}
	\EE\fro{\hat\U\hat\U^\top - \U\U^\top}^2 \leq  \frac{4r}{\sum_{j=1}^m u_j^{-2}} + 28r \sum_{j=1}^m e^{-c_0(n_j\wedge p)}. 
\end{align*}

\hspace{1cm}

\noindent\textbf{High probability upper bound. }
Recall the event $\calE_1^{(j)},\calE_2^{(j)}$ defined respectively in \eqref{event1} and \eqref{event2}. Moreover, we define
\begin{align*}
	\calE^{(j)}_3:= \bigg\{\op{\bar\H^{(j)}}\lesssim \sqrt{n_j\vee p}\bigg\} \cap \bigg\{\op{\bar\G^{(j)}}\lesssim \sqrt{p}\bigg\}.
\end{align*}
Then $\PP(\calE^{(j)}_3) \geq 1-2e^{-p}-2e^{-n_j\vee p}$. We denote $\calF^{(j)} = \calE_1^{(j)}\cap\calE_2^{(j)}\cap \calE_3^{(j)}$ and $\calF = \cap_{j=1}^m\calF^{(j)}$.

We define the function $\phi(s;t_0)$ for given $t_0>0$ as
\begin{align}\label{phi}
	\phi(s;t_0) = \begin{cases}
		&1, \quad s\leq t_0\\
		&2-\frac{s}{t_0},\quad t_0<s\leq 2t_0\\
		&0, \quad s\geq 2t_0. 
	\end{cases}
\end{align}
Also define 
\begin{align*}
	\psi_j(\bar\G^{(j)}, \bar\H^{(j)}):&=\phi(\op{\bar\H^{(j)}}; \sqrt{n_j\vee p})\cdot \phi(\op{\bar\H^{(j)}\bar\H^{(j)\top} - n_j\I}; \sqrt{pn_j})\cdot \phi(\op{\bar\G^{(j)}\bar\H^{(j)\top}}; \sqrt{n_jp})\\
	&\quad\cdot \mathds{1}(\op{\bar\G^{(j)}}\leq \sqrt{n_j})\cdot\mathds{1}(\op{\bar\G^{(j)}\bar\G^{(j)\top}-n_j\I}\leq \sqrt{(r+\eta_j)n_j}). 
\end{align*}
Then we have 
\begin{align*}
	\big|\fro{\U^\top\bXi_j\U}\cdot \psi_j(\bar\G^{(j)}, \bar\H^{(j)})\leq \frac{\sqrt{r+\eta_j}}{\sqrt{n_j}}(\lambda_{\max}+\sigma^2).
\end{align*}

In the following, we shall condition on $\bar\G^{(j)}$. In order to compute the Lipschitz constant, we denote 
\begin{align*}
	\bXi_j' = \mat{\U & \U_{\perp}}\mat{\frac{1}{n_j}\G^{(j)}\G^{(j)\top} - (\bLa + \sigma^2\I) & \frac{1}{n_j}\G^{(j)}\H^{(j)'\top}\\\frac{1}{n_j}\H^{(j)'}\G^{(j)\top}&\frac{1}{n_j}\H^{(j)'}\H^{(j)'\top} - \sigma^2\I }\mat{\U^\top\\\U_{\perp}^\top}
\end{align*}
Then we have 
\begin{align*}
	&\quad\big|\fro{\U_{\perp}^\top\bXi_j\U}\cdot \psi_j(\bar\G^{(j)}, \bar\H^{(j)}) - \fro{\U_{\perp}^\top\bXi_j'\U}\cdot \psi_j(\bar\G^{(j)}, \bar\H^{(j)'}) \big| \\
	&\leq \frac{1}{\sqrt{n_j}}\cdot\sigma(\lambda_{\max}^{1/2}+\sigma)\cdot \big\|\bar\H^{(j)}- \bar\H^{(j)'}\big\|_{\rm F}.
\end{align*}
And 
\begin{align*}
	&\quad\big|\fro{\U_{\perp}^\top\bXi_j\U_{\perp}}\cdot \psi_j(\bar\G^{(j)}, \bar\H^{(j)}) - \fro{\U_{\perp}^\top\bXi_j'\U_{\perp}}\cdot \psi_j(\bar\G^{(j)}, \bar\H^{(j)'})\big| \\ 
	&\leq \frac{2\sigma^2}{n_j}\cdot\sqrt{n_j\vee p}\cdot\fro{\bar\H^{(j)} -\bar\H^{(j)'}}. 
\end{align*}
Under the given SNR condition, we have 
\begin{align*}
	\frac{1}{\sqrt{n_j}}\cdot\sigma(\lambda_{\max}^{1/2}+\sigma) \geq \frac{2\sigma^2}{n_j}\cdot\sqrt{n_j\vee p}. 
\end{align*}
Next we analyze $\U^\top\D_j\U, \U_{\perp}^\top\D_j\U$ and $\U_{\perp}^\top\D_j\U_{\perp}$. Recall $\D_j = \tilde\U_j\tilde\U_j^\top - \U\U^\top + \Z_j$, and 
\begin{align*}
	&\quad\tilde\U_j\tilde\U_j^\top - \U\U^\top \\
	&= \sum_{l\geq 1}\sum_{s\in\SS_l} (-1)^{\lzero{\s} + 1}\M(s_1)\bLa^{-s_1}\underline{\M(s_1)^\top\bXi_j\M(s_2)}\bLa^{-s_2}\cdots\bLa^{-s_l}\underline{\M(s_l)^\top\bXi_j\M(s_{l+1})}\bLa^{-s_{l+1}}\M(s_{l+1})^\top,
\end{align*}
Now for each $l, \s\in\SS_l$, we have 
\begin{align*}
	&\bigg\| \M(s_1)\bLa^{-s_1}\underline{\M(s_1)^\top\bXi_j\M(s_2)}\bLa^{-s_2}\cdots\bLa^{-s_l}\underline{\M(s_l)^\top\bXi_j\M(s_{l+1})}\bLa^{-s_{l+1}}\M(s_{l+1})^\top\cdot\psi_j(\bar\G^{(j)}, \bar\H^{(j)})\\
	&-\M(s_1)\bLa^{-s_1}\underline{\M(s_1)^\top\bXi_j'\M(s_2)}\bLa^{-s_2}\cdots\bLa^{-s_l}\underline{\M(s_l)^\top\bXi_j'\M(s_{l+1})}\bLa^{-s_{l+1}}\M(s_{l+1})^\top\cdot\psi_j(\bar\G^{(j)}, \bar\H^{(j)'})\bigg\|_{\rm F}\\
	&\leq l\cdot\frac{1}{8^{l-1}}\cdot\lambda_{\min}^{-1}\cdot \sqrt{\frac{1}{n_j}}\sigma(\lambda_{\max}^{1/2}+\sigma)\cdot\fro{\bar\H^{(j)} - \bar\H^{(j)'}}
\end{align*}
Therefore we conclude 
\begin{align*}
	\fro{\U^\top\D_j\U\cdot\psi_j(\bar\G^{(j)}, \bar\H^{(j)}) &- \U^\top\D_j'\U\cdot\psi_j(\bar\G^{(j)}, \bar\H^{(j)'})}\\
	&\lesssim \lambda_{\min}^{-1}\cdot \sqrt{\frac{1}{n_j}}\sigma(\lambda_{\max}^{1/2}+\sigma)\cdot\fro{\bar\H^{(j)} - \bar\H^{(j)'}}  + \alpha_j\fro{\bar\Z_{j,1} -\bar\Z_{j,1}'}, \\
	\fro{\U_{\perp}^\top\D_j\U\cdot\psi_j(\bar\G^{(j)}, \bar\H^{(j)}) &- \U_{\perp}^\top\D_j'\U\cdot\psi_j(\bar\G^{(j)}, \bar\H^{(j)'})}\\
	&\lesssim \lambda_{\min}^{-1}\cdot \sqrt{\frac{1}{n_j}}\sigma(\lambda_{\max}^{1/2}+\sigma)\cdot\fro{\bar\H^{(j)} - \bar\H^{(j)'}}  + \alpha_j\fro{\bar\Z_{j,2} -\bar\Z_{j,2}'}, \\
	\fro{\U_{\perp}^\top\D_j\U_{\perp}\cdot\psi_j(\bar\G^{(j)}, \bar\H^{(j)}) &- \U_{\perp}^\top\D_j'\U_{\perp}\cdot\psi_j(\bar\G^{(j)}, \bar\H^{(j)'})}\\
	&\lesssim \lambda_{\min}^{-1}\cdot \sqrt{\frac{1}{n_j}}\sigma(\lambda_{\max}^{1/2}+\sigma)\cdot\fro{\bar\H^{(j)} - \bar\H^{(j)'}}  + \alpha_j\fro{\bar\Z_{j,3} -\bar\Z_{j,3}'}.
\end{align*}

Recall
\begin{align*}
	\bDel_j =\sum_{l\geq1}\sum_{\s\in\SS_l}(-1)^{\lzero{\s} + 1}\M(s_1)\cdot \underline{\M(s_1)^\top\D_{j}\M(s_2)}\cdots \underline{\M(s_l)^\top\D_j\M(s_{l+1})}\cdot \M(s_{l+1})^\top.
\end{align*}
We also define 
\begin{align*}
	\tilde\psi_j(\bar\G^{(j)}, \bar\H^{(j)},\bar\Z_{j,1},\bar\Z_{j,2},\bar\Z_{j,3}) := \psi_j(\bar\G^{(j)}, \bar\H^{(j)})\cdot \phi(\op{\bar\Z_{j,1}}; \sqrt{r+\eta_j})\cdot\phi(\op{\bar\Z_{j,2}}; \sqrt{p})\cdot\phi(\op{\bar\Z_{j,3}}; \sqrt{p}). 
\end{align*}

And we have conditioning on $\bar\G^{(j)}$, for arbitrary given $\fro{\M}\leq 1$, the function $\tr(\U_{\perp}^\top\bDel_j\U\M)\cdot\tilde\psi_j(\bar\G^{(j)}, \bar\H^{(j)},\bar\Z_{j,1},\bar\Z_{j,2},\bar\Z_{j,3})$ is 
\begin{align*}
	\lambda_{\min}^{-1}\cdot \sqrt{\frac{1}{n_j}}\sigma(\lambda_{\max}^{1/2}+\sigma) + \alpha_j
\end{align*}
Lipschitz. 
Using Gaussian concentration theorem, this indicates 
\begin{align}\label{psi2bound:Lj}
	\psitwo{\vec(\U_{\perp}^\top\bDel_j\U)\cdot\tilde\psi_j(\bar\G^{(j)}, \bar\H^{(j)},\bar\Z_{j,1},\bar\Z_{j,2},\bar\Z_{j,3})} \leq	\underbrace{C\lambda_{\min}^{-1}\cdot \sqrt{\frac{1}{n_j}}\sigma(\lambda_{\max}^{1/2}+\sigma) + C\alpha_j}_{=:L_j}. 
\end{align}
For notation simplicity, we collect $\g_j:= [\vec(\bar \G^{(j)})^\top, \vec(\bar\H^{(j)})^\top, \vec(\bar \Z_{j,1})^\top, \vec(\bar \Z_{j,2})^\top, \vec(\bar \Z_{j,3})^\top]^\top$. Then $\g_j\sim N(0,\I)$. 
We shall define two matrix-valued functions $f_1,f_2$ as 
\begin{align*}
	f_1(\g_j) &= \U^\top\bDel_j\U_{\perp}\cdot\tilde\psi_j(\g_j), \\
	f_2(\g_{j_2},\cdots,\g_{j_{l-1}}) &= \underline{\U_{\perp}^\top\bDel_{j_2} \M(s_3)}\cdots\underline{\M(s_{l-1})^\top\bDel_{j_{l-1}} \U_{\perp}}\cdot\prod_{u=2}^{l-1}\tilde\psi_{j_u}(\g_{j_u}). 
\end{align*}
Then it boils down to estimating 
\begin{align*}
	\sum_{\j\in\II_1} w_{j_1}\cdots w_{j_l}\cdot \tr\big(f_1(\g_{j_1})\cdot f_2(\g_{j_2},\cdots,\g_{j_{l-1}})\cdot f_1(\g_{j_l})^\top\big)
\end{align*}
We define a projection map: $\pi_2^{l-1}: (j_1,\cdots,j_l)\mapsto (j_2,\cdots,j_{l-1})$. 
And we denote $\pi_2^{l-1}(\II_1)$ the image of $\pi_2^{l-1}$ applied to $\II_1$. 
And then we can rewrite $\sum_{\j\in\II_1}$ as 
\begin{align*}
	\sum_{\j\in\II_1} = \sum_{(j_2,\cdots, j_{l-1})\in\pi_2^{l-1}(\II_1)}\sum_{\substack{j_1\neq j_l\\ \{j_1,j_l\}\cap\{j_2,\cdots, j_{l-1}\} = \emptyset}}.
\end{align*}
Next, we shall fix $(j_2,\cdots, j_{l-1})$, and use the decoupling to derive the upper bound. For given $(j_2,\cdots, j_{l-1})$, condition on $\g_{j_2},\cdots, \g_{j_{l-1}}$, using the decoupling technique (e.g. \cite{de2012decoupling}), we have
\begin{align*}
	&\quad\PP\bigg(\big|\sum_{\substack{j_1\neq j_l\\ \{j_1,j_l\}\cap\{j_2,\cdots, j_{l-1}\} = \emptyset}}w_{j_1}\cdots w_{j_l} \tr\big(f_1(\g_{j_1})\cdot f_2(\g_{j_2},\cdots,\g_{j_{l-1}})\cdot f_1(\g_{j_l})^\top\big)\big| \geq t\bigg| \g_{j_2},\cdots,\g_{j_{l-1}}\bigg)\\
	&\leq C\PP\bigg(C\big|\sum_{\substack{j_1\neq j_l\\ \{j_1,j_l\}\cap\{j_2,\cdots, j_{l-1}\} = \emptyset}}w_{j_1}\cdots w_{j_l} \tr\big(f_1(\g_{j_1})\cdot f_2(\g_{j_2},\cdots,\g_{j_{l-1}})\cdot f_1(\g'_{j_l})^\top\big)\big| \geq t\bigg| \g_{j_2},\cdots,\g_{j_{l-1}}\bigg),
\end{align*}
for some absolute constant $C>0$, where $\g'_j$ is an i.i.d. copy of $\g_j$. Notice 
\begin{align*}
	&\quad\sum_{\substack{j_1\neq j_l\\ \{j_1,j_l\}\cap\{j_2,\cdots, j_{l-1}\} = \emptyset}}w_{j_1}\cdots w_{j_l}\cdot\tr\big(f_1(\g_{j_1})\cdot f_2(\g_{j_2},\cdots,\g_{j_{l-1}})\cdot f_1(\g'_{j_l})^\top\big)\\
	&= \tr\bigg(\big(\sum_{j_1\in[m]\backslash\{j_2,\cdots, j_{l-1}\}}w_{j_1}f_1(\g_{j_1})\big)\cdot w_{j_2}\cdots w_{j_{l-1}}f_2(\g_{j_2},\cdots,\g_{j_{l-1}})\cdot \big(\sum_{j_l\in[m]\backslash\{j_2,\cdots, j_{l-1}\}}w_{j_l}f_1(\g'_{j_l})\big)^\top\bigg)\\
	&\quad -\sum_{j_1\in[m]\backslash\{j_2,\cdots, j_{l-1}\}}w_{j_1}^2w_{j_2}\cdots w_{j_{l-1}}\tr\bigg(f_1(\g_{j_1})\cdot f_2(\g_{j_2},\cdots,\g_{j_{l-1}})\cdot f_1(\g'_{j_1})^\top\bigg).
\end{align*}
For the first term above, due to \eqref{psi2bound:Lj}, we have
\begin{align*}
	\psitwo{\sum_{j_1\in[m]\backslash\{j_2,\cdots, j_{l-1}\}}w_{j_1}\vec\big(f_1(\g_{j_1})\big)}^2 &= \psitwo{\sum_{j_l\in[m]\backslash\{j_2,\cdots, j_{l-1}\}}w_{j_1}\vec\big(f_1(\g_{j_l}')\big)}^2 \\
	&\leq \sum_{j\in[m]\backslash\{j_2,\cdots, j_{l-1}\}} w_j^2L_j^2 \leq \sum_{j\in[m]}w_j^2L_j^2
\end{align*}
Therefore we have 
\begin{align*}
	&\bigg|\tr\bigg(\big(\sum_{j_1\in[m]\backslash\{j_2,\cdots, j_{l-1}\}}w_{j_1}f_1(\g_{j_1})\big)\cdot w_{j_2}\cdots w_{j_{l-1}}f_2(\g_{j_2},\cdots,\g_{j_{l-1}})\cdot \big(\sum_{j_l\in[m]\backslash\{j_2,\cdots, j_{l-1}\}}w_{j_l}f_1(\g'_{j_l})\big)^\top\bigg)\bigg| \Bigg| \big\{\g_j\big\}_{j=1}^m\\
	&\leq \fro{\big(\sum_{j_1\in[m]\backslash\{j_2,\cdots, j_{l-1}\}}w_{j_1}f_1(\g_{j_1})\big)\cdot w_{j_2}\cdots w_{j_{l-1}}f_2(\g_{j_2},\cdots,\g_{j_{l-1}})} \cdot (\sum_j w_j^2L_j^2)^{1/2}\cdot s_{l,1}
\end{align*}
holds with probability exceeding $1-e^{-s_{l,1}^2}$. 
Using Lemma \ref{lemma:l2-norm-concentration}, we have 
\begin{align*}
	&\quad \fro{\big(\sum_{j_1\in[m]\backslash\{j_2,\cdots, j_{l-1}\}}w_{j_1}f_1(\g_{j_1})\big)\cdot w_{j_2}\cdots w_{j_{l-1}}f_2(\g_{j_2},\cdots,\g_{j_{l-1}})}\\
	&\leq w_{j_2}\cdots w_{j_{l-1}}\op{f_2(\g_{j_2},\cdots,\g_{j_{l-1}})} \cdot \fro{\sum_{j_1\in[m]\backslash\{j_2,\cdots, j_{l-1}\}}w_{j_1}f_1(\g_{j_1})}\\
	&\leq w_{j_2}u_{j_2}\cdots w_{j_{l-1}}u_{j_{l-1}} \cdot (\sum_j w_j^2L_j^2)^{1/2}\cdot (s_{l,2} + \sqrt{pr}),
\end{align*}
with probability exceeding $1-e^{-s_{l,2}^2}$. Taking union bound, we conclude 
\begin{align*}
	&\quad\bigg|\tr\bigg(\big(\sum_{j_1\in[m]\backslash\{j_2,\cdots, j_{l-1}\}}w_{j_1}f_1(\g_{j_1})\big)\cdot w_{j_2}\cdots w_{j_{l-1}}f_2(\g_{j_2},\cdots,\g_{j_{l-1}})\cdot \big(\sum_{j_l\in[m]\backslash\{j_2,\cdots, j_{l-1}\}}w_{j_l}f_1(\g'_{j_l})\big)^\top\bigg)\bigg| \\
	&\leq w_{j_2}u_{j_2}\cdots w_{j_{l-1}}u_{j_{l-1}}\sum_jw_j^2L_j^2\cdot s_{l,1}(s_{l,2}+\sqrt{pr})
\end{align*}
with probability exceeding $1-M^{l-2}(e^{-s_{l,1}^2}+e^{-s_{l,2}^2})$. 
This leads to 
\begin{align*}
	&\quad \bigg|\sum_{\substack{j_1\neq j_l\\ \{j_1,j_l\}\cap\{j_2,\cdots, j_{l-1}\} = \emptyset}}w_{j_1}\cdots w_{j_l}\cdot\tr\big(f_1(\g_{j_1})\cdot f_2(\g_{j_2},\cdots,\g_{j_{l-1}})\cdot f_1(\g'_{j_l})^\top\big)\bigg|\\
	&\leq w_{j_2}u_{j_2}\cdots w_{j_{l-1}}u_{j_{l-1}} \cdot (\sum_j w_j^2L_j^2)^{1/2}\cdot (s_{l,2} + \sqrt{pr}) + r\cdot\sum_{j_1\in[m]\backslash\{j_2,\cdots, j_{l-1}\}}(w_{j_1}u_{j_1})^2w_{j_2}u_{j_2}\cdots w_{j_{l-1}}u_{j_{l-1}}. 
\end{align*}

In conclusion, we have 
\begin{align*}
	&\quad\bigg|\sum_{\j\in\II_1} w_{j_1}\cdots w_{j_l}\tr\big(f_1(\g_{j_1})\cdot f_2(\g_{j_2},\cdots,\g_{j_{l-1}})\cdot f_1(\g_{j_l})^\top\big)\bigg|\\
	&\leq(\sum_{j=1}^m w_ju_j)^{l-2}\cdot  \sum_j w_j^2L_j^2 \cdot s_1(s_2+\sqrt{pr})+ r\cdot \sum_{\j\in\II_2}w_{j_1}u_{j_1}\cdots w_{j_l}u_{j_l}\\
	&\leq \frac{1}{8^{l-2}}\sum_j w_j^2L_j^2 \cdot s_{l,1}(s_{l,2}+\sqrt{pr}) + r\cdot \sum_{\j\in\II_2}w_{j_1}u_{j_1}\cdots w_{j_l}u_{j_l}. 
\end{align*}
Now we sum up over all $\s\in\SS_l$ and $l\geq 2$, and we set $s_{l,1} = s_{l,2}= \max\{\tau_0, \sqrt{2l\cdot\log (4m)}\}$ for some $\tau_0\leq \sqrt{pr}$ to be chosen later, and we get with failure probability 
\begin{align*}
	&\quad\sum_{l\geq 2} 2\cdot 4^lm^{l-2}\cdot \exp(-\max\{\tau_0^2, l\log m\})\\
	&\leq \sum_{l=2}^{\lceil \tau_0^2/2\log(4m)\rceil}(4m)^l \cdot e^{-\tau_0^2} + \sum_{l\geq \lceil \tau_0^2/2\log(4m)\rceil +1}(4m)^{-l}\\
	&\leq 4e^{-\tau_0^2/2},
\end{align*}
the following holds:
\begin{align*}
	&\quad\bigg|\sum_{\j\in\II_1} w_{j_1}\cdots w_{j_l}\tr\big(f_1(\g_{j_1})\cdot f_2(\g_{j_2},\cdots,\g_{j_{l-1}})\cdot f_1(\g_{j_l})^\top\big)\bigg|\\
	&\leq \frac{1}{8^{l-2}}\sum_j w_j^2L_j^2 \cdot \max\{\tau_0, \sqrt{2l\cdot\log (4m)}\}(\max\{\tau_0, \sqrt{2l\cdot\log (4m)}\}+\sqrt{pr}) \\
	&\quad+ r\cdot \sum_{\j\in\II_2}w_{j_1}u_{j_1}\cdots w_{j_l}u_{j_l}\\
	&\leq \frac{1}{8^{l-2}}\sum_j w_j^2L_j^2 \cdot \max\{\tau_0, \sqrt{2l\cdot\log (4m)}\}(\max\{\tau_0, \sqrt{2l\cdot\log (4m)}\}+\sqrt{pr}) \\
	&\quad+ 2lr\cdot (\sum_{k=1}^mw_k^2u_k^2)(\sum_{k=1}^mw_ku_k)^{l-2},
\end{align*}
where the last line is due to \eqref{I2}. 
Using this, we have 
\begin{align*}
	&\quad\bigg|\sum_{l\geq 2}\sum_{\s\in\SS_l}(-1)^{\lzero{\s}+1} \sum_{\j\in[m]^l}w_{j_1}\cdots w_{j_l}\tr\big(f_1(\g_{j_1})\cdot f_2(\g_{j_2},\cdots,\g_{j_{l-1}})\cdot f_1(\g_{j_l})^\top\big)\bigg|\\
	&\leq \bigg|\sum_{l\geq 2}\sum_{\s\in\SS_l}(-1)^{\lzero{\s}+1} \sum_{\j\in\II_1(\s)}w_{j_1}\cdots w_{j_l}\tr\big(f_1(\g_{j_1})\cdot f_2(\g_{j_2},\cdots,\g_{j_{l-1}})\cdot f_1(\g_{j_l})^\top\big)\bigg|\\
	&\quad + \bigg|\sum_{l\geq 2}\sum_{\s\in\SS_l}(-1)^{\lzero{\s}+1} \sum_{\j\in\II_2(\s)}w_{j_1}\cdots w_{j_l}\tr\big(f_1(\g_{j_1})\cdot f_2(\g_{j_2},\cdots,\g_{j_{l-1}})\cdot f_1(\g_{j_l})^\top\big)\bigg|.
\end{align*}
The first term above can be bounded by 
\begin{align*}
	&\quad\sum_{j}w_j^2L_j^2\cdot \bigg(\sum_{l=2}^{\lceil \tau_0^2/2\log(4m)\rceil}\tau_0\sqrt{pr}\cdot 2^{-l} + \sum_{l\geq \lceil \tau_0^2/2\log(4m)\rceil +1}2^{-l}(2l\log(4m) + \sqrt{2lpr \log (4m)})\bigg)\\
	&\quad+\sum_{l\geq 2}4^l\cdot 2lr(\sum_{k=1}^mw_k^2u_k^2)(\sum_{k=1}^mw_ku_k)^{l-2}\\
	&\leq C\sum_{j}w_j^2L_j^2\bigg(\tau_0\sqrt{pr} + \log(4m) + \sqrt{pr\log(4m)}\bigg) + Cr(\sum_{k=1}^mw_k^2u_k^2)\\
	&\leq C\tau_0\sqrt{pr}\sum_{j}w_j^2L_j^2 + Cr(\sum_{k=1}^mw_k^2u_k^2),
\end{align*}
if we set $\tau_0 = c_0\sqrt{p}\geq \log (4m)$. Here the first inequality is due to \eqref{expectation:upper}.

In summary, 
\begin{align*}
	&\quad \bigg|\sum_{l\geq 2}\sum_{\s\in\SS_l}(-1)^{\lzero{\s}+1} \sum_{\j\in[m]^l}w_{j_1}\cdots w_{j_l}\tr\big(f_1(\g_{j_1})\cdot f_2(\g_{j_2},\cdots,\g_{j_{l-1}})\cdot f_1(\g_{j_l})^\top\big)\bigg|\\
	&\leq C\tau_0\sqrt{pr}\sum_{j=1}^mw_j^2L_j^2 + Cr(\sum_{j=1}^mw_j^2u_j^2).
\end{align*}
with probability $1 - 22\sum_{j=1}^m e^{-c_0(n_j\wedge p)}$. By setting $w_j = \frac{u_j^{-2}}{\sum_j u_j^{-2}}$, we obtain 
\begin{align*}
	\fro{\hat\U\hat\U^\top - \U\U^\top}^2 &\leq Cr (\sum_j u_j^{-2})^{-1} = \frac{Cpr}{\sum_j\big(\lambda_{\min}^{-1}(\lambda_{\max}^{1/2}  +\sigma)\sigma \sqrt{\frac{1}{n_j}}+\alpha_j\big)^{-2}}\\
	&\leq \frac{Cpr}{\sum_j \Big(n_j \wedge \big(n_j^2\epsilon_j^2\log^{-1}(\frac{2.5}{\delta_j})p^{-1}(r+\log n_j)^{-1}\big)\Big)}\cdot \frac{\sigma^2}{\lambda_{\min}}(\lambda_{\max}/\lambda_{\min} + \frac{\sigma^2}{\lambda_{\min}}). 
\end{align*}
Finally since $\lambda_{\max}\asymp\lambda_{\min}\asymp\lambda$, we finish the proof.

\subsection{Proof of Theorem \ref{thm:covariance:upperbound}}

We have 
\begin{align}\label{diffSigma}
	\hat\bSigma - \bSigma &= \sum_j v_j \big(\hat\U\hat\U^\top (\hat\bSigma_j- \sigma^2\I)\hat\U\hat\U^\top  + \hat\U\E_j\hat\U^\top\big) - \U\U^\top(\bSigma - \sigma^2\I)\U\U^\top\notag\\
	&= \sum_jv_j \hat\U\hat\U^\top(\hat\bSigma_j- \sigma^2\I)\hat\U\hat\U^\top - \U\U^\top(\bSigma - \sigma^2\I)\U\U^\top + \sum_jv_j\hat\U\E_j\hat\U^\top.
\end{align}
For the first term in \eqref{diffSigma}, we can further decompose it as 
\begin{align}\label{diffSigma:1}
	&\quad \sum_jv_j \hat\U\hat\U^\top(\hat\bSigma_j- \sigma^2\I)\hat\U\hat\U^\top - \U\U^\top(\bSigma - \sigma^2\I)\U\U^\top\notag\\
	&= (\hat\U\hat\U^\top - \U\U^\top)\sum_jv_j(\hat\bSigma_j- \sigma^2\I)\U\U^\top + \hat\U\hat\U^\top \sum_jv_j(\hat\bSigma_j- \sigma^2\I)(\hat\U\hat\U^\top - \U\U^\top)\notag\\
	&\quad+ \U\U^\top (\sum_j v_j\hat\bSigma_j - \bSigma)\U\U^\top.
\end{align}
And notice from Lemma \ref{lemma:covariance}, for each $j$, with probability exceeding $1-e^{-t_{1,j}}$, 
\begin{align*}
	\op{\hat\bSigma_j - \bSigma} \lesssim \left(\sqrt{\frac{\tilde r+t_{1,j}}{n_j}}\vee \frac{\tilde r+t_{1,j}}{n_j}\right)(\lambda + \sigma^2),
\end{align*}
where $\tilde r = \frac{r\lambda + p\sigma^2}{\lambda + \sigma^2}$. Under the given SNR, and by setting $t_{1,j} = p\wedge n_j$, we obtain $\op{\hat\bSigma_j - \bSigma}\lesssim \lambda$, and therefore with probability exceeding $1-e^{-p\wedge n_j}$,
\begin{align*}
	\op{\hat\bSigma_j - \sigma^2\I}\leq \op{\hat\bSigma_j -\bSigma} + \op{\bSigma - \sigma^2\I} \lesssim \lambda.
\end{align*}
Moreover, we have
\begin{align*}
	\U^\top (\sum_j v_j\hat\bSigma_j - \bSigma)\U = \sum_{j=1}^m\sum_{i=1}^{n_j}\frac{v_j}{n_j}\U^\top X_{i}^{(j)}X_{i}^{(j)\top}\U - (\bLa + \sigma^2\I).
\end{align*}
Now applying Lemma \ref{lemma:covariance}, we obtain with probability exceeding $1-e^{-t_2}$, 
\begin{align*}
	\op{\U^\top (\sum_j v_j\hat\bSigma_j - \bSigma)\U} \lesssim (\sigma^2+\lambda)\sqrt{\sum_j \frac{v_j^2}{n_j}}\sqrt{r+t_2}.
\end{align*}
In summary, we obtain the upper bound as follows:
\begin{align*}
	&\quad\bigg\|\sum_jv_j \hat\U\hat\U^\top(\hat\bSigma_j- \sigma^2\I)\hat\U\hat\U^\top - \U\U^\top(\bSigma - \sigma^2\I)\U\U^\top\bigg\|_{\rm F}\\
	&\lesssim \lambda\cdot\fro{\hat\U\hat\U^\top - \U\U^\top} +(\sigma^2+\lambda)\sqrt{\sum_j \frac{v_j^2}{n_j}}\sqrt{(r+t_2)r}\\
	&\lesssim \Bigg(\sqrt{\frac{pr}{\sum_{j=1}^m \Big(n_j \wedge \big( n_j^2\epsilon_j^2p^{-1}(r+\log n_j)^{-1}\log^{-1}(2.5\delta_j^{-1})\big)\Big)}}\cdot\sqrt{(\sigma^2+\lambda)\sigma^2}\Bigg) \bigwedge (\sqrt{2r}\lambda)\\
	&\quad+(\sigma^2+\lambda)\sqrt{\sum_j \frac{v_j^2}{n_j}}\sqrt{(r+t_2)r}, 
\end{align*}
where the last inequality comes from Theorem \ref{thm:highprob:upperbound}.
For the second term in \eqref{diffSigma}, $[\sum_jv_j\E_j]_{kl} = [\sum_jv_j\E_j]_{kl}\sim N(0,\sum_{j}v_j^2\beta_j^2)$, and  $[\sum_jv_j\E_j]_{kl} \sim N(0,2\sum_jv_j^2\beta_j^2)$. 
And with probability exceeding $1-e^{-t_2}$, 
\begin{align*}
	\fro{\sum_jv_j\E_j}\lesssim \sqrt{r}\cdot\sqrt{r+t_2}\sqrt{\sum_{j}v_j^2\beta_j^2}. 
\end{align*}
So we conclude with probability exceeding $1-23\sum_j e^{-(n_j\wedge p)} - e^{-t_2}$,
\begin{align*}
	\fro{\hat\bSigma - \bSigma}^2 &\lesssim \Bigg(\frac{pr}{\sum_{j=1}^m \Big(n_j \wedge \big( n_j^2\epsilon_j^2d^{-1}(r+\log n_j)^{-1}\log^{-1}(2.5\delta_j^{-1})\big)\Big)}\cdot(\sigma^2+\lambda)\sigma^2\Bigg)\bigwedge (2r\lambda^2)\\
	&\quad + (r+t_2)r\cdot\sum_jv_j^2\bigg(\frac{1}{n_j}(\lambda^2+\sigma^4) + \beta_j^2\bigg).
\end{align*}

Next we consider the expectation for $\EE\fro{\hat\bSigma - \bSigma}^2$. From \eqref{diffSigma} and \eqref{diffSigma:1}, we see 
\begin{align}\label{expectation:diffSigma}
	\EE\fro{\hat\bSigma - \bSigma}^2 &\lesssim  \EE\fro{\sum_jv_j(\hat\bSigma_j- \sigma^2\I)(\hat\U\hat\U^\top - \U\U^\top)}^2 + \EE\fro{\sum_j v_j\E_j}^2\notag\\
	&\quad + \EE\fro{\U^\top (\sum_j v_j\hat\bSigma_j - \bSigma)\U}^2. 
\end{align}
We consider the event 
\begin{align*}
	\calF_1 = \bigcap_{j=1}^m\bigg\{\op{\hat\bSigma_j-\sigma^2\I}\leq C(\lambda + \sigma^2)\bigg(\sqrt{\frac{\tilde r+ t_{1,j}}{n_j}}\vee  \frac{\tilde r+ t_{1,j}}{n_j}\bigg)+\lambda\bigg\}. 
\end{align*}
By setting $t_{1,j} = n_j\wedge p$, then under $\calF_1$, we have $\op{\hat\bSigma_j-\sigma^2\I}\leq2\lambda$ under the given SNR condition, and $\PP(\calF_1^c)\leq \sum_je^{-(n_j\wedge p)}$. 
Then 
\begin{align*}
	&\quad\EE\fro{\sum_jv_j(\hat\bSigma_j- \sigma^2\I)(\hat\U\hat\U^\top - \U\U^\top)}^2\\
	&= \EE\fro{\sum_jv_j(\hat\bSigma_j- \sigma^2\I)(\hat\U\hat\U^\top - \U\U^\top)}^2\cdot\mathds{1}(\calF_1) + \EE\fro{\sum_jv_j(\hat\bSigma_j- \sigma^2\I)(\hat\U\hat\U^\top - \U\U^\top)}^2\cdot\mathds{1}(\calF_1^c) \\
	&\leq 4\lambda^2\EE\fro{\hat\U\hat\U^\top - \U\U^\top}^2 +\bigg(\EE\fro{\sum_jv_j(\hat\bSigma_j- \sigma^2\I)(\hat\U\hat\U^\top - \U\U^\top)}^4\bigg)^{1/2} \cdot \big(\PP(\calF_1^c)\big)^{1/2}. 
\end{align*}
Notice 
\begin{align*}
	\EE\fro{\sum_jv_j(\hat\bSigma_j- \sigma^2\I)(\hat\U\hat\U^\top - \U\U^\top)}^4 &\leq 4r^2\cdot\EE\op{\sum_jv_j \hat\bSigma_j- \sigma^2\I}^4\\
	&\leq 4r^2\cdot m^3\sum_j v_j^4\EE\op{\hat\bSigma_j- \sigma^2\I}^4.
\end{align*}
And 
\begin{align*}
	\EE\op{\hat\bSigma_j- \sigma^2\I}^4 &\leq 8\EE\op{\hat\bSigma_j - \bSigma}^4 + 8\lambda^4.
\end{align*}
From Lemma \ref{lemma:covariance}, we see 
\begin{align*}
	\PP\bigg(\op{\hat\bSigma_j - \bSigma} \leq C(\lambda + \sigma^2)\big(\sqrt{\frac{\tilde r+t}{n_j}}\vee \frac{\tilde r+t}{n_j}\big)\bigg)\leq e^{-t}. 
\end{align*}
Then from Lemma \ref{lemma:mixtail:moments}, we see 
\begin{align*}
	\EE\op{\hat\bSigma_j - \bSigma}^4\leq  C(\lambda + \sigma^2)^4\frac{\tilde r^2}{n_j^2} \leq \lambda^4,
\end{align*}
where the last inequality comes from the SNR condition. 
And thus 
\begin{align*}
	\EE\fro{\sum_jv_j(\hat\bSigma_j- \sigma^2\I)(\hat\U\hat\U^\top - \U\U^\top)}^4 \leq 64r^2m^3\lambda^4
\end{align*}
Therefore
\begin{align*}
	\bigg(\EE\fro{\sum_jv_j(\hat\bSigma_j- \sigma^2\I)(\hat\U\hat\U^\top - \U\U^\top)}^4\bigg)^{1/2} \cdot \big(\PP(\calF_1^c)\big)^{1/2} \leq 8rm^{3/2}\lambda^2\cdot(\sum_j e^{-p\wedge n_j})^{1/2}.
\end{align*}
This term is dominated by the first term as long as $\lambda/\sigma^2\lesssim \frac{1}{m^{3/2}(\sum_jn_j)(\sum_j e^{-(n_j\wedge p)})}$. 
For the second term in \eqref{expectation:diffSigma}, we have 
\begin{align*}
	\EE\fro{\sum_{j}v_j\E_j}^2 = r^2\sum_jv_j^2\beta_j^2. 
\end{align*}
We now consider the last term in \eqref{expectation:diffSigma}. 
From Lemma \ref{lemma:covariance}, we have 
\begin{align*}
	\PP\bigg(\op{\U^\top (\sum_j v_j\hat\bSigma_j - \bSigma)\U}\geq C(\sigma^2+\lambda)\sqrt{\sum_j \frac{v_j^2}{n_j}}\sqrt{r+t}\bigg)\leq e^{-t}. 
\end{align*}
Which gives 
\begin{align*}
	\EE\fro{\U^\top (\sum_j v_j\hat\bSigma_j - \bSigma)\U}^2\leq r\EE\op{\U^\top (\sum_j v_j\hat\bSigma_j - \bSigma)\U}^2\leq Cr^2(\lambda+\sigma)^2\sum_j \frac{v_j^2}{n_j}. 
\end{align*}
In conclusion, we have
\begin{align*}
	\EE\fro{\hat\bSigma - \bSigma}^2&\leq \left(\frac{Cpr}{\sum_{j=1}^m \Big(n_j \wedge \big( n_j^2\epsilon_j^2p^{-1}(r+\log n_j)^{-1}\log^{-1}(2.5\delta_j^{-1})\big)\Big)}(\sigma^2\lambda+\sigma^4) \right)\bigwedge (2r \lambda^2)\\
	&\quad+ Cr^2\sum_j v_j^2\left(\frac{\lambda^2+\sigma^4}{n_j} + \frac{8}{\epsilon_j^2}\log\left(\frac{2.5}{\delta_j}\right)\frac{\lambda^2(r+\log n_j)^2 + \sigma^4p^2}{n_j^2}\right)
\end{align*}
Now by setting $v_j\propto \left(\frac{\lambda^2+\sigma^4}{n_j} + \frac{8}{\epsilon_j^2}\log\left(\frac{2.5}{\delta_j}\right)\frac{\lambda^2(r+\log n_j)^2 + \sigma^4p^2}{n_j^2}\right)^{-1}$, we obtain 
\begin{align*}
	\EE\fro{\hat\bSigma - \bSigma}^2&\leq \left(\frac{Cpr}{\sum_{j=1}^m\Big( n_j \wedge \big( n_j^2\epsilon_j^2p^{-1}(r+\log n_j)^{-1}\log^{-1}(2.5\delta_j^{-1})\big)\Big)}(\sigma^2\lambda+\sigma^4) \right)\bigwedge (2r \lambda^2)\\
	&\quad+ \frac{Cr^2}{\sum_{j=1}^m \Big(\big(n_j(\lambda^2+\sigma^4)^{-1}\big) \wedge \big(n_j^2\epsilon_j^2\log^{-1}\left(\frac{2.5}{\delta_j}\right)\big(\lambda^2(r+\log n_j)^2 + \sigma^4p^2\big)^{-1}\big)\Big)}.
\end{align*}
Finally we show 
\begin{align*}
	&\quad\frac{Cr^2}{\sum_{j=1}^m \Big(\big(n_j(\lambda^2+\sigma^4)^{-1}\big) \wedge \big(n_j^2\epsilon_j^2\log^{-1}\left(\frac{2.5}{\delta_j}\right)\big(\lambda^2(r+\log n_j)^2 + \sigma^4p^2\big)^{-1}\big)\Big)}\\
	&\leq \frac{Cpr}{\sum_{j=1}^m \Big(n_j \wedge \big( n_j^2\epsilon_j^2p^{-1}(r+\log n_j)^{-1}\log^{-1}(2.5\delta_j^{-1})\big)\Big)}(\sigma^2\lambda+\sigma^4) \\
	&\quad+ \frac{Cr^2}{\sum_{j=1}^m \Big(\big(n_j\lambda^{-2}\big) \wedge \big(n_j^2\epsilon_j^2\log^{-1}\left(\frac{2.5}{\delta_j}\right)\lambda^{-2}(r+\log n_j)^{-2}\big)\Big)}.
\end{align*}
We consider the different cases for $\lambda/\sigma^2$. When $\lambda/\sigma^2\leq 1$, the left hand side is bounded by 
\begin{align*}
	\frac{Cr^2\sigma^4}{\sum_{j=1}^m \Big(n_j\wedge \big(n_j^2\epsilon_j^2\log^{-1}\left(\frac{2.5}{\delta_j}\right)\big)\Big)},
\end{align*}
which is bounded by the first term on the right hand side. Next if $\lambda/\sigma^2\geq \frac{p}{r}$, we have the left hand side is bounded by 
\begin{align*}
	\frac{Cr^2\lambda^2}{\sum_{j=1}^m \Big(n_j \wedge \big(n_j^2\epsilon_j^2\log^{-1}\left(\frac{2.5}{\delta_j}\right)(r+\log n_j)^{-2}\big)\Big)},
\end{align*}
which is bounded by the second term on the right hand side. Finally we consider if $1\leq \lambda/\sigma^2\leq \frac{p}{r}$. Then the left hand side is bounded by 
\begin{align*}
	\frac{Cr^2}{\sum_{j=1}^m \Big(\big(n_j\lambda^{-2}\big) \wedge \big(n_j^2\epsilon_j^2\log^{-1}\left(\frac{2.5}{\delta_j}\right)\big(\lambda^2(r+\log n_j)^2 + \sigma^4p^2\big)^{-1}\big)\Big)}
\end{align*}
Notice the first term on the right hand side is lower bounded by 
\begin{align*}
	\frac{Cpr\sigma^2\lambda}{\sum_{j=1}^m \Big(n_j \wedge \big( n_j^2\epsilon_j^2p^{-1}(r+\log n_j)^{-1}\log^{-1}(2.5\delta_j^{-1})\big)\Big)}.
\end{align*}
Therefore it is equivalent to showing 
\begin{align*}
	&\quad\frac{Cr^2}{\sum_{j=1}^m \bigg(\frac{\lambda^2}{n_j} + \frac{1}{n_j^2\epsilon_j^2}\log\left(\frac{2.5}{\delta_j}\right)\big(\lambda^2(r+\log n_j)^2 + \sigma^4p^2\big)\bigg)^{-1}}\\
	&\leq \frac{Cpr\sigma^2\lambda}{\sum_{j=1}^M \bigg(\frac{1}{n_j}+  \frac{1}{n_j^2\epsilon_j^2}p(r+\log n_j)\log(2.5\delta_j^{-1})\bigg)^{-1}},
\end{align*}
which is true if and only if
\begin{align*}
	&r\sum_{j=1}^m \bigg(\frac{1}{n_j}+  \frac{1}{n_j^2\epsilon_j^2}p(r+\log n_j)\log(2.5\delta_j^{-1})\bigg)^{-1}\leq p\sigma^2\lambda\sum_{j=1}^m \bigg(\frac{\lambda^2}{n_j} + \frac{1}{n_j^2\epsilon_j^2}\log\left(\frac{2.5}{\delta_j}\right)\big(\lambda^2(r+\log n_j)^2 + \sigma^4p^2\big)\bigg)^{-1}.
\end{align*}
This can be implied by 
\begin{align*}
	r\bigg(\frac{\lambda^2}{n_j} + \frac{1}{n_j^2\epsilon_j^2}\log\left(\frac{2.5}{\delta_j}\right)\big(\lambda^2(r+\log n_j)^2 + \sigma^4p^2\big)\bigg)\leq d\sigma^2\lambda\bigg(\frac{1}{n_j}+  \frac{1}{n_j^2\epsilon_j^2}p(r+\log n_j)\log(2.5\delta_j^{-1})\bigg),
\end{align*}
which is true if $1\leq \frac{\lambda}{\sigma^2}\leq \frac{p}{r}$.

\subsection{Proof of Lemma \ref{lem:alg-dp}}

We first state the following lemma, which will be helpful. 
\begin{lemma}\label{lemma:subspace-sensitivity}
	For any $j\in[m]$, suppose $\lambda/\sigma^2\geq C_1(p/n_j + \sqrt{p/n_j})$, and $p\geq \log n_j$. Then with probability exceeding $1-n_j^{-100} - 12e^{-c_0(p\wedge n_j)}$, 
	\begin{align*}
		\max_{i\in[n_j]}\fro{\tilde U_j\tilde U_j^\top - \tilde U_j^{(i)}\tilde U_j^{(i)\top}} \leq C\frac{1}{n_j}\sqrt{\frac{\lambda+\sigma^2}{\lambda}\frac{\sigma^2}{\lambda}} \sqrt{p(r+\log n_j)}.
	\end{align*}
\end{lemma}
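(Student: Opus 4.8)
The plan is to bound the sensitivity of the empirical spectral projector $\tilde U_j\tilde U_j^\top = \svd_r(\hat\Sigma_j)$ under replacement of a single datum, by writing both $\hat\Sigma_j$ and its neighbor $\hat\Sigma_j^{(i)}$ (with $X_i^{(j)}$ replaced by an independent copy $X_i^{(j)\prime}$) as low-rank perturbations of $\Sigma$ and then applying the spectral representation formula from \cite{xia2021normal}, exactly as in the analysis of $\tilde U_j\tilde U_j^\top - UU^\top$ carried out in the proof of Theorem~\ref{thm:highprob:upperbound}. First I would set $\bXi_j := \hat\Sigma_j - \Sigma$ and $\bXi_j^{(i)} := \hat\Sigma_j^{(i)} - \Sigma$, noting $\bXi_j - \bXi_j^{(i)} = \tfrac{1}{n_j}\big(X_i^{(j)}X_i^{(j)\top} - X_i^{(j)\prime}X_i^{(j)\prime\top}\big)$ is rank at most $2$. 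On the good event where $\op{\bXi_j}\vee\op{\bXi_j^{(i)}}\leq \lambda_{\min}/4$ (which holds with the stated probability by Lemma~\ref{lemma:covariance} and a union bound over $i\in[n_j]$, using $p\geq\log n_j$ to absorb the $\log n_j$ loss from the union bound into the exponent — this is where the SNR hypothesis $\lambda/\sigma^2\geq C_1(p/n_j+\sqrt{p/n_j})$ enters), both projectors admit the Neumann expansion
\begin{align*}
\tilde U_j\tilde U_j^\top - UU^\top = \sum_{l\geq 1}\calS_{U\Lambda U^\top,l}(\bXi_j), \qquad \tilde U_j^{(i)}\tilde U_j^{(i)\top} - UU^\top = \sum_{l\geq 1}\calS_{U\Lambda U^\top,l}(\bXi_j^{(i)}),
\end{align*}
with $\calS_{U\Lambda U^\top,l}$ as in \eqref{def:Sl}.

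Subtracting, the difference $\tilde U_j\tilde U_j^\top - \tilde U_j^{(i)}\tilde U_j^{(i)\top}$ becomes a telescoping sum over $l$ of terms each of which contains at least one factor of the form $M(s)^\top(\bXi_j - \bXi_j^{(i)})M(s')$ and the remaining factors of the form $M(s)^\top\bXi_j M(s')$ or $M(s)^\top\bXi_j^{(i)} M(s')$. I would bound $\fro{M(s)^\top(\bXi_j-\bXi_j^{(i)})M(s')}\leq \fro{\bXi_j-\bXi_j^{(i)}}\leq \tfrac{2}{n_j}\max_i\big(\op{X_i^{(j)}}^2\vee\op{X_i^{(j)\prime}}^2\big)$; the controlling scale here is really the \emph{off-diagonal} block — the dominant contribution to estimating $UU^\top$ comes from $U^\top\bXi_j U_\perp$, whose operator norm is $\lesssim (\lambda^{1/2}+\sigma)\sigma\sqrt{p/n_j}$ on the good event (see \eqref{event1}), so replacing one $\bXi_j$-factor by the rank-2 difference and keeping one off-diagonal $\bXi$-factor, together with the $\lambda_{\min}^{-1}$ weights from each $\Lambda^{-s}$, produces the leading bound $\lambda_{\min}^{-1}\cdot\sqrt{\tfrac{\lambda+\sigma^2}{\lambda}}\cdot\sigma\cdot\tfrac{1}{n_j}\sqrt{p(r+\log n_j)}$, matching the claim after using $\lambda_{\max}\asymp\lambda_{\min}\asymp\lambda$. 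The geometric decay $\sum_{l\geq 1} l\cdot(1/4)^{l-1}<\infty$ from $\op{\bXi_j}\leq\lambda_{\min}/4$ makes the higher-order ($l\geq 2$) terms lower order, so they are absorbed into the constant $C$.

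The main obstacle — and the only genuinely delicate point — is establishing the uniform (over $i\in[n_j]$) bound $\fro{\bXi_j - \bXi_j^{(i)}}\lesssim \tfrac{1}{n_j}\sqrt{\tfrac{\lambda+\sigma^2}{\lambda}\tfrac{\sigma^2}{\lambda}}\sqrt{p(r+\log n_j)}$ with the \emph{right} dependence on the spiked structure, rather than the crude $\op{X_i^{(j)}}^2/n_j\asymp (\lambda+\sigma^2)p/n_j$ one gets from a naive norm bound. The resolution is to decompose each $X_i^{(j)}$ into its $U$-component $g_i^{(j)}\sim N(0,\Lambda+\sigma^2 I_r)$ and $U_\perp$-component $h_i^{(j)}\sim N(0,\sigma^2 I_{p-r})$, so that the rank-2 difference, when sandwiched between $M(s)$'s and weighted by $\Lambda^{-s}$, only ever produces the combinations $\tfrac{1}{n_j}g g^\top$ (weighted by $\Lambda^{-2}$, giving $\lesssim\tfrac{(\lambda+\sigma^2)(r+\log n_j)}{\lambda^2 n_j}$), $\tfrac{1}{n_j}g h^\top$ (weighted by $\Lambda^{-1}$, giving $\lesssim \tfrac{\sqrt{(\lambda+\sigma^2)}\,\sigma}{\lambda n_j}\sqrt{p(r+\log n_j)}$), and $\tfrac{1}{n_j}h h^\top$ (unweighted, giving $\lesssim\tfrac{\sigma^2 p}{\lambda^2 n_j}\cdot\lambda$), with $g h^\top$ dominating under the SNR condition exactly as the Lipschitz-constant comparison $\tfrac{1}{\sqrt{n_j}}\sigma(\lambda_{\max}^{1/2}+\sigma)\geq\tfrac{2\sigma^2}{n_j}\sqrt{n_j\vee p}$ does in the high-probability part of the Theorem~\ref{thm:highprob:upperbound} proof. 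A union bound over $i\in[n_j]$ of sub-exponential/sub-Gaussian tail estimates for $\op{g_i^{(j)}}$, $\op{h_i^{(j)}}$, and the relevant bilinear forms (each with a $\log n_j$ inflation of the deviation parameter, harmless since $p\geq\log n_j$) closes the argument; the $n_j^{-100}$ term is the price of one such polynomial-tail union bound and the $12e^{-c_0(p\wedge n_j)}$ collects the Gaussian-concentration events from Lemma~\ref{lemma:covariance} and the net arguments.
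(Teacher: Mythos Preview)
Your overall strategy---Neumann expansion, telescoping the difference, and controlling the replacement factor block-by-block---matches the paper's, and your first-order analysis is correct: the dominant $l=1$ term is $\calS_{U\Lambda U^\top,1}(\Xi_j)-\calS_{U\Lambda U^\top,1}(\Xi_j^{(i)}) = U\Lambda^{-1}U^\top(\Xi_j-\Xi_j^{(i)})U_\perp U_\perp^\top + (\cdot)^\top$, which involves only the $g_ih_i^\top$ block and gives exactly the claimed bound.

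However, your claim that the $l\geq 2$ terms are ``absorbed into the constant $C$ by geometric decay'' has a genuine gap. The problematic case is when the replacement factor $\Xi_j-\Xi_j^{(i)}$ lands in a $U_\perp^\top(\cdot)U_\perp$ slot (i.e., $s_l=s_{l+1}=0$). There $\|U_\perp^\top(\Xi_j-\Xi_j^{(i)})U_\perp\| = \tfrac{1}{n_j}\|h_ih_i^\top-h_i'h_i'^\top\|\asymp\tfrac{\sigma^2 p}{n_j}$, and even your own stated estimate $\tfrac{\sigma^2 p}{\lambda n_j}$ for the resulting $l=2$ contribution exceeds the target by the factor $\tfrac{\sigma\sqrt{p}}{\sqrt{(\lambda+\sigma^2)(r+\log n_j)}}$, which is unbounded under the lemma's SNR hypothesis (take $\lambda\asymp\sigma^2\sqrt{p/n_j}$ with $p=n_j$ large). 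The naive telescoping bound is off by a factor of order $\sqrt{p/(r+\log n_j)}$ here, so geometric decay in $l$ cannot rescue it.

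The paper repairs this with a \emph{leave-one-out} argument you do not mention: decompose $\Xi_j=\Xi_{j,1}+\Xi_{j,2}$ with $\Xi_{j,1}=\tfrac{1}{n_j}(X_i^{(j)}X_i^{(j)\top}-\Sigma)$ and $\Xi_{j,2}$ independent of $X_i^{(j)}$. In the chain $h_i^\top\cdot U_\perp^\top\Xi_j U_\perp\cdots U_\perp^\top\Xi_j U$ running from the replacement slot to the first position with $s>0$, peel off the $\Xi_{j,1}$ contributions (these are easily bounded) and note that, conditionally on $\Xi_{j,2}$, the remaining chain is \emph{Gaussian} in $h_i$ with norm $\lesssim\sigma\|U_\perp^\top\Xi_{j,2}U_\perp\cdots U_\perp^\top\Xi_{j,2}U\|_F$. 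Because this terminates at $U\in\RR^{p\times r}$, the Frobenius norm contributes $\sqrt{r}$ rather than the $\sqrt{p}$ one would get from $\|h_i\|$ directly. This $\sqrt{r}$-versus-$\sqrt{p}$ saving is precisely what restores the target rate; without it the argument does not close.
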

\begin{proof}
	Most of the proof is the same as the proof in Lemma 3 in \cite{cai2024optimalPCA}, we only aim at improving the probability. Notice we have from Lemma \ref{lemma:covariance}, $\op{\hat \Sigma_j - \Sigma}, \op{\hat \Sigma_j^{(i)} - \Sigma}\leq c_1\lambda$ with probability exceeding $1-2e^{-c_0 (n_j\wedge p)}$, where $\hat \Sigma_j^{(i)} =\frac{1}{n_j}\sum_{i'\neq i}X_{i'}^{(j)}X_{i'}^{(j)\top} + \frac{1}{n_j}\tilde X_{i}^{(j)}\tilde X_{i}^{(j)\top}$, where $\tilde X_{i}^{(j)}$ is an i.i.d. copy of $X_{i}^{(j)}$. Therefore we have
	\begin{align*}
		\tilde U_j\tilde U_j^\top - UU^\top &= \sum_{k\geq 1}\calS_{\Sigma,k}(\Xi), \\
		\tilde U_j^{(i)}\tilde U_j^{(i)\top} - UU^\top &= \sum_{k\geq 1}\calS_{\Sigma,k}(\Xi^{(i)}),
	\end{align*}
	where $\Xi = \hat\Sigma_j-\Sigma$, and $\Xi^{(i)} = \hat\Sigma_j^{(i)}-\Sigma$.
	This implies
	\begin{align*}
		\tilde U_j \tilde U_j^\top - 	\tilde U_j^{(i)}\tilde U_j^{(i)\top}  = \calS_{\Sigma,1}(\Xi)-\calS_{\Sigma,1}(\Xi^{(i)}) + \sum_{k\geq 2}\big(\calS_{\Sigma,k}(\Xi) - \calS_{\Sigma,k}(\Xi^{(i)})\big). 
	\end{align*}
	Notice 
	\begin{align*}
		\calS_{\Sigma,1}(\Xi)-\calS_{\Sigma,1}(\Xi^{(i)}) &= U\Lambda^{-1}U^\top(\Xi-\Xi^{(i)})U_{\perp}U_{\perp}^\top + U_{\perp}U_{\perp}^\top(\Xi-\Xi^{(i)})U\Lambda^{-1}U^\top\\
		&=\frac{1}{n_j}U\Lambda^{-1}U^\top( X_{i}^{(j)} X_{i}^{(j)\top}-\tilde X_{i}^{(j)}\tilde X_{i}^{(j)\top})U_{\perp}U_{\perp}^\top \\
		&\quad+ \frac{1}{n_j}U_{\perp}U_{\perp}^\top( X_{i}^{(j)} X_{i}^{(j)\top}-\tilde X_{i}^{(j)}\tilde X_{i}^{(j)\top})U\Lambda^{-1}U^\top.
	\end{align*}
	We consider the event 
	\begin{align*}
		\calE_1 = &\bigg\{\ltwo{U^\top X_{i}^{(j)}}, \ltwo{U^\top \tilde X_{i}^{(j)}}\lesssim \sqrt{\lambda+\sigma^2}\sqrt{r+\log n_j}: \forall i\in[n_j]\bigg\}\\
		&\quad\cap\bigg\{\ltwo{U_{\perp}^\top X_{i}^{(j)}}, \ltwo{U_{\perp}^\top \tilde X_{i}^{(j)}}\lesssim \sigma\sqrt{p}: \forall i\in[n_j]\bigg\}.
	\end{align*}
	Then $\PP(\calE_1)\geq 1-n_j^{-100}$. 
	Therefore under $\calE_1$, 
	\begin{align*}
		\fro{\calS_{\Sigma,1}(\Xi)-\calS_{\Sigma,1}(\Xi^{(i)}) }&\leq \frac{2}{n_j}\fro{U\Lambda^{-1}U^\top X_{i}^{(j)} X_{i}^{(j)\top}U_{\perp}U_{\perp}^\top} + \frac{2}{n_j}\fro{U\Lambda^{-1}U^\top \tilde X_{i}^{(j)}\tilde X_{i}^{(j)\top}U_{\perp}U_{\perp}^\top}\\
		&\lesssim \frac{1}{n_j}\lambda^{-1}\sqrt{\lambda+\sigma^2} \sigma\sqrt{p}\sqrt{r+\log n_j}.
	\end{align*}
	Now we consider for $k\geq 2$. We denote the index set 
	\begin{align*}
		\SS_k = \{\s=(s_1,\cdots, s_{k+1}): s_1,\cdots,s_{k+1}\geq 0, s_1+\cdots + s_{k+1}=k\},
	\end{align*}
	whose cardinality is bounded by $|\SS_k| = \binom{2k}{k}\leq 4^k$. 
	We define 
	\begin{align*}
		\calT_{\Sigma, k,\s,l} = M(s_1)\Lambda^{-s_1}\underline{M(s_1)^\top\Xi^{(i)}M(s_2)}&\cdots \underline{M(s_l)^\top(\Xi - \Xi^{(i)})M(s_{l+1})}\\
		&\cdots \underline{M(s_k)^\top\Xi M(s_{k+1})}\Lambda^{-s_{k+1}}M(s_{k+1})^\top
	\end{align*}
	for $k\geq 2$, $\s \in \SS_k$, and $l\in[k]$ and $M(0):=U_{\perp}, M(s) = U$ for $s>0$. With slight abuse of notation, $\Lambda^{-0}= I_{p-r}$.  
	Then we have
	\begin{align}\label{S2-S2}
		\sum_{k\geq 2}\big(\calS_{\Sigma,k}(\Xi) - \calS_{\Sigma,k}(\Xi^{(i)})\big) = \sum_{k\geq 2}\sum_{\s\in\SS_k}\sum_{l\in[k]}\calT_{\Sigma, k,\s,l}. 
	\end{align}
	We consider the event 
	\begin{align*}
		\calE_2 &=\bigg\{\op{U^\top\Xi U}, \op{U^\top\Xi^{(i)} U}\lesssim (\lambda+\sigma^2)\sqrt{\frac{r+\eta}{n_j}}:\forall i\in[n_j]\bigg\}\\
		&\quad\cap\bigg\{\op{U_{\perp}^\top\Xi U}, \op{U_{\perp}^\top\Xi^{(i)} U}\lesssim (\lambda^{1/2}+\sigma)\sigma\sqrt{\frac{p}{n_j}}:\forall i\in[n_j]\bigg\}\\
		&\quad\cap\bigg\{\op{U_{\perp}^\top\Xi U_{\perp}}, \op{U_{\perp}^\top\Xi^{(i)} U_{\perp}}\lesssim \sigma^2\sqrt{\frac{p}{n_j}}:\forall i\in[n_j]\bigg\}.
	\end{align*}
	Then $\PP(\calE_2)\geq 1- 4(e^{-c_0p}+e^{-c_0n_j}) - n_je^{-\eta}$ for some $\eta>0$ to be specified. 
	Then as long as $\eta+r\leq p\wedge n_j$, and under the given SNR condition, we have 
	\begin{align*}
		\lambda^{-1}\max\bigg\{(\lambda+\sigma^2)\sqrt{\frac{r+\eta}{n_j}},(\lambda^{1/2}+\sigma)\sigma\sqrt{\frac{p}{n_j}}, \sigma^2\sqrt{\frac{p}{n_j}}\bigg\}\leq \frac{1}{10}. 
	\end{align*}
	Now we bound $\fro{\calT_{\Sigma, k,\s,l} }$ under $\calE_1\cap\calE_2$. We discuss different choices of $s_l,s_{l+1}$. 
	
	\hspace{1cm}
	
	\noindent\textit{Case 1: $s_l, s_{l+1}>0$. } In this case, we have
	\begin{align*}
		M(s_l)^\top(\Xi - \Xi^{(i)})M(s_{l+1})= \frac{1}{n_j}U^\top(X_{i}^{(j)}X_{i}^{(j)\top} - \tilde X_{i}^{(j)}\tilde X_{i}^{(j)\top})U.
	\end{align*}
	Therefore under $\calE_1$, we have
	\begin{align*}
		\fro{M(s_l)^\top(\Xi - \Xi^{(i)})M(s_{l+1})} \lesssim \frac{(\lambda+\sigma^2)(r+\log n_j)}{n_j}.
	\end{align*}
	Now we consider the rest terms in $\calT_{\Sigma, k,\s,l}$. Since $s_l,s_{l+1}>0$, there exists $l'\neq l \in [k]$, $s_{l'} = 0, s_{l'+1}>0$ or $s_{l'} > 0, s_{l'+1}=0$. Therefore we have
	\begin{align*}
		\fro{\calT_{\Sigma, k,\s,l}} &\leq \frac{C_1}{10^{k-2}}\lambda^{-2}\frac{(\lambda+\sigma^2)(r+\log n_j)}{n_j}(\lambda^{1/2}+\sigma)\sigma\sqrt{\frac{p}{n_j}}\\
		&\leq \frac{1}{10^{k-1}}\frac{1}{n_j}\lambda^{-1}\sqrt{\lambda+\sigma^2} \sigma\sqrt{p}\sqrt{r+\log n_j},
	\end{align*}
	where the last line holds given the SNR condition. 
	
	\hspace{1cm}
	
	\noindent\textit{Case 2: $s_l=0, s_{l+1}>0$ or $s_{l}>0, s_{l+1}=0$. } In this case, we have
	\begin{align*}
		\fro{M(s_l)^\top(\Xi - \Xi^{(i)})M(s_{l+1})}&= \frac{1}{n_j}\fro{U_{\perp}^\top(X_{i}^{(j)}X_{i}^{(j)\top} - \tilde X_{i}^{(j)}\tilde X_{i}^{(j)\top})U}\\
		&\lesssim \frac{1}{n_j}\sqrt{\lambda+\sigma^2} \sigma\sqrt{p}\sqrt{r+\log n_j}.
	\end{align*}
	And under $\calE_2$, we have
	\begin{align*}
		\fro{\calT_{\Sigma, k,\s,l}}\leq \frac{1}{10^{k-1}}\frac{1}{n_j}\lambda^{-1}\sqrt{\lambda+\sigma^2} \sigma\sqrt{p}\sqrt{r+\log n_j}. 
	\end{align*}
	
	\hspace{1cm}
	
	\noindent\textit{Case 3: $s_l=s_{l+1}=0$. } In order to derive a tight upper bound, we need to use the leave-one-out technique. Notice
	\begin{align*}
		\calT_{\Sigma, k,\s,l} &= M(s_1)\Lambda^{-s_1}\underline{M(s_1)^\top\Xi^{(i)}M(s_2)}\cdots \underline{U_{\perp}^\top(\Xi - \Xi^{(i)})U_{\perp}}\\
		&\hspace{5cm}\cdots \underline{M(s_k)^\top\Xi M(s_{k+1})}\Lambda^{-s_{k+1}}M(s_{k+1})^\top\\
		&=\frac{1}{n_j}M(s_1)\Lambda^{-s_1}\underline{M(s_1)^\top\Xi^{(i)}M(s_2)}\cdots \underline{U_{\perp}^\top(X_{i}^{(j)}X_{i}^{(j)\top} - \tilde X_{i}^{(j)}\tilde X_{i}^{(j)\top})U_{\perp}}\\
		&\hspace{5cm}\cdots \underline{M(s_k)^\top\Xi M(s_{k+1})}\Lambda^{-s_{k+1}}M(s_{k+1})^\top.
	\end{align*}
	We only consider the bound for 
	$$\fro{M(s_1)\Lambda^{-s_1}\underline{M(s_1)^\top\Xi^{(i)}M(s_2)}\cdots \underline{U_{\perp}^\top X_{i}^{(j)}X_{i}^{(j)\top}U_{\perp}}\cdots \underline{M(s_k)^\top\Xi M(s_{k+1})}\Lambda^{-s_{k+1}}M(s_{k+1})^\top},$$
	and the other term can be bounded similarly. Since $s_l = s_{l+1} = 0$. There exists some $l_0\in[k+1]$, $s_{l_0} >0$. We assume wlog $l_0>l+1$ and that $l_0$ is the smallest integer that $s_{l_0}>0$. In fact, if $l_0<l$, then the term can be easier to bound due to the independence between $X_{i}^{(j)}$ and $\Xi^{(i)}$. 
	Now we consider the term
	\begin{align*}
		\fro{	\underbrace{\underline{X_{i}^{(j)\top}U_{\perp}}}_{1\text{st}}\cdot\underline{U_{\perp}^\top \Xi U_{\perp}}\cdots\underbrace{\underline{U_{\perp}^\top \Xi U}}_{(l_0-l)\text{-th}} }.
	\end{align*}
	We now decompose $\Xi = \Xi_1 + \Xi_2$, with $\Xi_1 = \frac{1}{n_j}(X_{i}^{(j)}X_{i}^{(j)\top}-  \Sigma)$, and $\Xi_2 = \frac{1}{n_j}(\sum_{i'\neq i}X_j^{(i')}X_j^{(i')\top}-  \Sigma)$. 
	Then
	\begin{align*}
		\underbrace{\underline{X_{i}^{(j)\top}U_{\perp}}}_{1\text{st}}\cdot\underline{U_{\perp}^\top \Xi U_{\perp}}\cdots\underbrace{\underline{U_{\perp}^\top \Xi U}}_{(l_0-l)\text{-th}} &= \underline{X_{i}^{(j)\top}U_{\perp}}\cdot\underline{U_{\perp}^\top \Xi U_{\perp}}\cdots\underline{U_{\perp}^\top \Xi_1 U}\\
		&+\underline{X_{i}^{(j)\top}U_{\perp}}\cdot\underline{U_{\perp}^\top \Xi U_{\perp}}\cdots\underline{U_{\perp}^\top \Xi_1 U_{\perp}}\cdots\underline{U_{\perp}^\top \Xi_2 U}\\
		& + \cdots\\
		& + \underline{X_{i}^{(j)\top}U_{\perp}}\cdot\underline{U_{\perp}^\top \Xi_2 U_{\perp}}\cdots\underline{U_{\perp}^\top \Xi_2 U_{\perp}}\cdots\underline{U_{\perp}^\top \Xi_2 U}\\
		&=: g_1^\top + \cdots + g_{l_0-l}^\top.
	\end{align*}
	Notice $\Xi_2$ is independent of $X_{i}^{(j)}$. Therefore condition on $\Xi_2$, 
	\begin{align*}
		g_{l_0-l}\sim N(0,\sigma^2\underline{U^\top \Xi_2 U_{\perp}}\cdot\underline{U_{\perp}^\top \Xi_2 U_{\perp}}\cdots\underline{U_{\perp}^\top \Xi_2 U_{\perp}}\cdot\underline{U_{\perp}^\top \Xi_2 U_{\perp}}\cdots\underline{U_{\perp}^\top \Xi_2 U_{\perp}}\cdots\underline{U_{\perp}^\top \Xi_2 U}).
	\end{align*}
	We define 
	\begin{align*}
		\calE_3 &=
		\bigg\{\op{U_{\perp}^\top\Xi_2 U} \lesssim (\lambda^{1/2}+\sigma)\sigma\sqrt{\frac{p}{n_j}}:\forall i\in[n_j]\bigg\}
		\cap\bigg\{\op{U_{\perp}^\top\Xi_2 U_{\perp}}\lesssim \sigma^2\sqrt{\frac{p}{n_j}}:\forall i\in[n_j]\bigg\}.
	\end{align*}
	Then $\PP(\calE_3)\geq 1- 4(e^{-c_0p}+e^{-c_0n_j})$. 
	Then under $\calE_1\cap\calE_2\cap\calE_3$, 
	\begin{align*}
		\ltwo{g_{l_0-l}} \lesssim \frac{1}{10^{l_0-l-2}}\sqrt{r}\sigma(\lambda^{1/2}+\sigma)\sigma\sqrt{\frac{p}{n_j}}.
	\end{align*}
	And similarly for all $l' = 1, \cdots, l_0-l-1$, we have 
	\begin{align*}
		\ltwo{g_{l'}} \lesssim \frac{1}{10^{l_0-l-2}}\frac{p}{n_j}\sigma^3\sqrt{r}\lesssim \frac{1}{10^{l_0-l-2}}\sqrt{r}\sigma(\lambda^{1/2}+\sigma)\sigma\sqrt{\frac{p}{n_j}}. 
	\end{align*}
	In summary, 
	\begin{align*}
		\fro{	\underbrace{\underline{X_{i}^{(j)\top}U_{\perp}}}_{1\text{st}}\cdot\underline{U_{\perp}^\top \Xi U_{\perp}}\cdots\underbrace{\underline{U_{\perp}^\top \Xi U}}_{(l_0-l)\text{-th}} } \lesssim \frac{l_0}{10^{l_0-l-2}}\sqrt{r}\sigma(\lambda^{1/2}+\sigma)\sigma\sqrt{\frac{p}{n_j}}.
	\end{align*}
	Now we use the event $\calE_1, \calE_2$ to bound the rest of the terms, which give
	\begin{align*}
		&\quad\fro{M(s_1)\Lambda^{-s_1}\underline{M(s_1)^\top\Xi^{(i)}M(s_2)}\cdots \underline{U_{\perp}^\top X_{i}^{(j)}X_{i}^{(j)\top}U_{\perp}}\cdots \underline{M(s_k)^\top\Xi M(s_{k+1})}\Lambda^{-s_{k+1}}M(s_{k+1})^\top}\\
		&\lesssim \lambda^{-2}\sqrt{p}\sigma\frac{1}{10^{k-2}}\sqrt{r}\sigma(\lambda^{1/2}+\sigma)\sigma\sqrt{\frac{p}{n_j}}.
	\end{align*}
	This implies 
	\begin{align*}
		\fro{\calT_{\Sigma, k,\s,l}} \lesssim \frac{1}{n_j}\lambda^{-2}\sqrt{p}\sigma\frac{1}{10^{k-2}}\sqrt{r}\sigma(\lambda^{1/2}+\sigma)\sigma\sqrt{\frac{p}{n_j}}.
	\end{align*}
	In other words, under the given SNR, we have
	\begin{align*}
		\fro{\calT_{\Sigma, k,\s,l}} \leq \frac{1}{10^{k-1}}\frac{1}{n_j}\lambda^{-1}\sqrt{\lambda+\sigma^2} \sigma\sqrt{p}\sqrt{r+\log n_j}. 
	\end{align*}
	Finally from \eqref{S2-S2}, we conclude 
	\begin{align*}
		\fro{\sum_{k\geq 2}\big(\calS_{\Sigma,k}(\Xi) - \calS_{\Sigma,k}(\Xi^{(i)})\big)} &\leq \sum_{k\geq 2}\sum_{\s\in\SS_k}\sum_{l\in[k]}\fro{\calT_{\Sigma, k,\s,l}}\\
		&\leq \sum_{k\geq 2}\sum_{\s\in\SS_k}\sum_{l\in[k]}\frac{1}{10^{k-1}}\frac{1}{n_j}\lambda^{-1}\sqrt{\lambda+\sigma^2} \sigma\sqrt{p}\sqrt{r+\log n_j}\\
		&\lesssim \sum_{k\geq 2}\sum_{\s\in\SS_k}\frac{1}{8^{k}}\frac{1}{n_j}\lambda^{-1}\sqrt{\lambda+\sigma^2} \sigma\sqrt{p}\sqrt{r+\log n_j}\\
		&\lesssim \sum_{k\geq 2}\frac{1}{2^{k}}\frac{1}{n_j}\lambda^{-1}\sqrt{\lambda+\sigma^2} \sigma\sqrt{p}\sqrt{r+\log n_j}\\
		&\leq\frac{1}{n_j}\lambda^{-1}\sqrt{\lambda+\sigma^2} \sigma\sqrt{p}\sqrt{r+\log n_j}.
	\end{align*}
	In summary, by setting $\eta = n_j\wedge p$ and taking union bound over all $i\in[n_j]$, we conclude with probability exceeding $1-n_j^{-100} - 12e^{-c_0(p\wedge n_j)}$, 
	\begin{align*}
		\max_{i\in[n_j]}\fro{\tilde U_j\tilde U_j^\top - 	\tilde U_j^{(i)}\tilde U_j^{(i)\top}}\leq C\frac{1}{n_j}\sqrt{\frac{\lambda+\sigma^2}{\lambda}\frac{\sigma^2}{\lambda}} \sqrt{p(r+\log n_j)}.
	\end{align*}
\end{proof}
The proof for the sensitivity of singular values is a direct result of Lemma 4 in \cite{cai2024optimalPCA}. 
The claim of Lemma \ref{lem:alg-dp} then follows the sensitivity of Gaussian mechanism (see e.g. Lemma 1 in \cite{cai2024optimalPCA}).

\subsection{Proof of Theorem~\ref{thm:lower-bound}} \label{sec:proof-lower-bound}
We first show the lower bound for subspace estimation and then the lower bound for covariance matrix estimation.

\noindent\textbf{Lower bound for subspace estimation. }
Let $\Theta$ be a random matrix of size $p\times r$ with its entries i.i.d. $N(0,1)$. The density function of $\Theta$ is $p(\Theta) = (2\pi)^{-pr/2}\cdot\exp(-\fro{\Theta}^2/2)$.  
Let $W := \Theta^\top\Theta$ has the Wishart distribution $\mcW_r(I_r, p)$. 
Define a map $\psi:\RR^{p\times r}\rightarrow \RR^{p\times p}$ as $\psi(\Theta) = \Theta(\Theta^\top\Theta)^{-1}\Theta^\top$. Denote $\psi_{k_1k_2}(\Theta) := e_{k_1}^\top\psi(\Theta) e_{k_2}$ be the $(k_1, k_2)$-th component function of $\psi(\Theta)$ for all $ k_1, k_2\in[p]$. Basically, $\psi$ maps a given $p\times r$ matrix to a $p\times p$ rank-$r$ projection matrix.  Moreover, denote $\bar\Theta:=\Theta(\Theta^{\top}\Theta)^{-1/2}\in\OO^{p\times r}$ the left singular vectors of $\Theta$. It is clear by definition that $\psi(\Theta)=\bar\Theta \bar\Theta^{\top}$. 

Suppose that $X_i^{(j)}\overset{\text{i.i.d.}}{\sim} N(0,\lambda\bar\Theta\bar\Theta^\top + \sigma^2 I_p)$ for all $ j\in[m]$ and $\forall i\in[n_j]$.  Denote $\bar\Theta_{\perp}\in\OO^{p\times (p-r)}$ such that $(\bar\Theta, \bar\Theta_{\perp})$ is a $p\times p$ orthogonal matrix. 
We denote 
\begin{align}\label{YZ}
	\mat{Y_i^{(j)}\\ Z_i^{(j)}} := \mat{\bar\Theta^\top\\ \bar\Theta_{\perp}^\top}X_i^{(j)} \stackrel{{\rm i.i.d.}}{\sim} \mat{N(0,(\lambda+\sigma^2) I_r) \\ N(0,\sigma^{2}I_{p-r})},\quad \forall j\in[m],  i\in[n_j]
\end{align}
We define the score corresponding to $X_i^{(j)}$ as 
\begin{align}\label{eq:Sji}
	S_{j,i} := \nabla\log p(X_i^{(j)};\bTheta)= \big((\lambda + \sigma^2)^{-1} - \sigma^{-2}\big)\bar\bTheta_{\perp} Z_i^{(j)}Y_i^{(j)\top} W^{-1/2}\in\RR^{p\times r}. 
\end{align}
Denote $\calD_j:=\{X^{(j)}_i: i\in[n_j]\}$ the data set stored at $j$-th local client. We define
$$
S_j :=\nabla\log p(\calD_j;\bTheta)= \sum_{i=1}^{n_j}\nabla\log p(X_i^{(j)};\bTheta) = \sum_{i=1}^{n_j} S_{j,i}.
$$ 
This induces a linear operator $\RR^{p\times r}\mapsto \RR^{p\times r}$ for all $ j\in[m], i\in[n_j]$ defined by 
\begin{align}\label{eq:gcg}
	\calC_{j,i}(V) := \EE\inp{S_{j,i}}{V} S_{j,i} =  \frac{\lambda^2}{(\lambda+\sigma^2)\sigma^2}\bar\bTheta_{\perp}\bar\bTheta_{\perp}^\top VW^{-1},
\end{align}
where the expectation is taken w.r.t. $X_i^{(j)}$. We denote the sum as 
\begin{align}\label{def:calCj}
	\calC_j(V) := \sum_{i=1}^{n_j}\calC_{j,i}(V) =  \frac{n_j\lambda^2}{(\lambda+\sigma^2)\sigma^2}\bar\bTheta_{\perp}\bar\bTheta_{\perp}^\top VW^{-1}.
\end{align}

The following lemma states a matrix version of the Van Trees' inequality. We first clarify some useful notations. In the following, we view the gradient $\nabla\psi(\Theta)$ as an operator maps from $\RR^{p\times r}$ to $\RR^{p\times p}$, i.e., $\nabla\psi(\Theta)(Y)\in\RR^{p\times p}$ for all $ Y\in\RR^{p\times r}$ as a directional derivative.  See more details in Appendix \ref{sec:linalg:gradient}. Similarly, the gradient $\nabla\log p(\calD_j;\bTheta|\hat U_j)\in\RR^{p\times r}$ can be identified as an operator maps from $\RR^{p\times r}\rightarrow\RR$ such that $\cdot\mapsto \inp{\nabla\log p(\calD_j;\bTheta|\hat U_j)}{\cdot}$. Let $\nabla \psi(\Theta)^{\ast}: \RR^{p\times p}\mapsto \RR^{p\times r}$ the adjoint operator satisfying 
$$
\big<\nabla \psi(\Theta)(Y), M\big>=\big<\nabla\psi(\Theta)^{\ast}(M), Y\big>,\quad {\rm for}\ \forall Y\in\RR^{p\times r}\quad {\rm and}\quad \forall M\in\RR^{p\times p}. 
$$
Let $\circ$ denote the composition of operators. The trace of a self-adjoint operator $\calL$ that maps from $\RR^{p\times p}$ to itself is defined by 
$$
\tr(\calL):=\sum_{i,j\in[p]} \big<\calL(e_ie_j^{\top}), e_ie_j^{\top}\big>,
$$
where $e_i$ denotes the $i$-th canonical basis vector of $\RR^p$.

\begin{lemma}\label{lem:vantree}
For any estimator $\hat U\in\OO^{p\times r}$ of $\psi(\Theta)$, its average-case error rate is lower bounded by 
	\begin{align*}
		\int\EE\fro{\hat U\hat U^\top - \psi(\bTheta)}^2\cdot p(\bTheta) d\bTheta \geq \frac{\big(\int\tr(\nabla\psi(\bTheta)\circ \nabla\psi(\bTheta)^*)\cdot p(\bTheta)d\bTheta\big)^2}{\sum_{j=1}^M \EE\int \tr\big(\nabla\psi(\bTheta)\circ \calI(\bTheta|\hat U_j) \circ  \nabla\psi(\bTheta)^*\big)\cdot  p(\bTheta)d\bTheta+\calJ(p) },
	\end{align*}
where $\hat U_j$ denotes any $(\eps_j, \delta_j)$-DP estimator based on dataset $\calD_j$ at $j$-th local client and 
	\begin{align*}
		\calJ(p) &= \sum_{k_1, k_2\in[p]} \int\Delta\psi_{k_1k_2}^2(\bTheta)p(\bTheta) d\bTheta,\\
		\Delta\psi_{k_1k_2}^2(\bTheta)&=\sum_{(i,j)\in[p]\times [r]}\frac{\partial^2(\psi_{k_1k_2}^2)}{\partial\Theta_{ij}^2}(\Theta), \quad \quad \forall k_1, k_2\in[p]\\
		\calI(\bTheta|\hat U_j) &= \EE \big[\big(\nabla\log p(\calD_j;\bTheta|\hat U_j)\big)^*\circ\nabla\log p(\calD_j;\bTheta|\hat U_j)\big].
	\end{align*}
\end{lemma}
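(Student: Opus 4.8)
The plan is to prove a Bayesian Cram\'er--Rao (van Trees) inequality in the matrix-valued setting, adapted both to the non-linear reparametrization $\psi$ and to the distributed structure of the data. Write $E:=\hat U\hat U^\top-\psi(\bTheta)\in\RR^{p\times p}$, and let $\tilde S(\bTheta):=\sum_{j=1}^m\nabla\log p(\calD_j;\bTheta\,|\,\hat U_j)+\nabla\log p(\bTheta)\in\RR^{p\times r}$ be the total Bayesian score of the privatized model (the per-client scores plus the prior score $\nabla\log p(\bTheta)=-\bTheta$). The whole estimate rests on applying Cauchy--Schwarz to $E$ and the test matrix $T:=\nabla\psi(\bTheta)\big(\tilde S(\bTheta)\big)\in\RR^{p\times p}$,
$$
\Big(\EE\big\langle E,\,T\big\rangle\Big)^2\ \le\ \EE\fro{E}^2\cdot \EE\fro{T}^2,
$$
with the expectation taken over $(\bTheta,\calD_1,\dots,\calD_m)$ and the internal randomness of the mechanisms, and then evaluating the two factors on the right and the cross term on the left.

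For the denominator I would expand $\EE\fro{T}^2=\EE\big\langle\tilde S,\,(\nabla\psi^*\!\circ\nabla\psi)(\tilde S)\big\rangle$ with $\tilde S$ written as the sum of the $m$ client scores and the prior score. Conditionally on $\bTheta$ the client scores are independent and centred, while $\nabla\psi^*\!\circ\nabla\psi$ and the prior score are $\bTheta$-measurable, so all cross terms vanish; evaluating the surviving diagonal terms conditionally on $\bTheta$ and using the trace identities for self-adjoint operators on $\RR^{p\times r}$ together with the cyclicity of the operator trace turns the $j$-th term into $\EE\!\int\tr\big(\nabla\psi(\bTheta)\circ\calI(\bTheta\,|\,\hat U_j)\circ\nabla\psi(\bTheta)^*\big)p(\bTheta)\,d\bTheta$, which is exactly the $j$-th summand of the denominator, and the prior term reduces — after Gaussian integration by parts ($\EE[\bTheta_{ij}g]=\EE[\partial_{ij}g]$) and the degree-zero homogeneity $\psi(t\bTheta)=\psi(\bTheta)$ — to $\calJ(p)$. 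The federated structure enters only here, through the additivity of Fisher information over the independent clients.

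For the cross term I would write $\EE\langle E,T\rangle=\EE\big\langle\nabla\psi(\bTheta)^*(E),\,\nabla_\bTheta\log\!\big(p(\calD;\bTheta)p(\bTheta)\big)\big\rangle$ and integrate by parts in $\bTheta$ inside the double integral over $(\calD,\bTheta)$; the boundary term at infinity vanishes because the Gaussian prior decays, and the set where $\bTheta^\top\bTheta$ is singular has measure zero and is removed by a truncation argument. This separates a $\sum_{ij}\|\partial_{\bTheta_{ij}}\psi\|^2$ piece, which integrates to $\int\tr(\nabla\psi(\bTheta)\circ\nabla\psi(\bTheta)^*)p(\bTheta)\,d\bTheta$ — the square root of the claimed numerator — from a residual generated by the second derivatives $\partial^2_{\bTheta_{ij}}\psi$; the residual is then recognized, using once more Stein's identity and the homogeneity relation $\langle\bTheta,\nabla\psi_{k_1k_2}(\bTheta)\rangle=0$, as being accounted for by the $\calJ(p)$ term, and rearranging the Cauchy--Schwarz bound yields the stated inequality. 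I expect the main obstacle to be precisely this last step: one must rigorously justify the differential calculus for $\psi(\bTheta)=\bTheta(\bTheta^\top\bTheta)^{-1}\bTheta^\top$ near its singular locus, carefully track the three nested operators $\nabla\psi$, $\nabla\psi^*$ and $\calI(\bTheta\,|\,\hat U_j)$ through the trace manipulations, and show that the curvature of $\psi$ together with the prior's Fisher information is captured \emph{exactly} by $\calJ(p)$ rather than only up to a loose constant, so that the resulting bound is sharp enough to match the upper bounds of Theorems~\ref{thm:highprob:upperbound} and \ref{thm:covariance:upperbound}.
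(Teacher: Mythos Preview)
Your overall plan --- Cauchy--Schwarz between the error $E=\hat U\hat U^\top-\psi(\bTheta)$ and a score-based test matrix, then integration by parts --- is exactly the paper's strategy. The gap is in your choice of test matrix and the resulting bookkeeping of $\calJ(p)$.

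With your $T=\nabla\psi(\bTheta)(\tilde S)$, the prior piece of $\tilde S$ is $\nabla\log p(\bTheta)=-\bTheta$, and by the very homogeneity you cite, $\nabla\psi(\bTheta)(\bTheta)=0$. Hence the ``prior term'' in $\EE\fro{T}^2$ is identically zero, not $\calJ(p)$; no amount of Stein integration by parts will turn $0$ into $\calJ(p)$. On the cross-term side your computation is correct: integrating $\EE\langle E,T\rangle$ by parts in $\bTheta$ yields the trace piece $\int\tr(\nabla\psi\circ\nabla\psi^*)p\,d\bTheta$ \emph{minus} a residual $\EE\sum_{k_1k_2}E_{k_1k_2}\Delta\psi_{k_1k_2}$. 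But this residual lives inside $(\EE\langle E,T\rangle)^2$, not inside $\EE\fro{T}^2$, so it cannot simply be ``accounted for'' by a $\calJ(p)$ sitting in the denominator; at best you can Cauchy--Schwarz it separately as $|\EE\langle E,\Delta\psi\rangle|\le(\EE\fro{E}^2)^{1/2}\calJ(p)^{1/2}$ and rearrange, which works but costs a constant and is not what you described.

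The paper's fix is to build the Laplacian into the test matrix from the outset, taking
\[
B_{k_1k_2}=\frac{1}{p(x,\bTheta|\{\hat U_j\})p(\bTheta)}\sum_{(i,j)}\partial_{\bTheta_{ij}}\Big([\nabla\psi_{k_1k_2}]_{ij}\,p(x,\bTheta|\{\hat U_j\})\,p(\bTheta)\Big)=\Delta\psi_{k_1k_2}+\big\langle\nabla\psi_{k_1k_2},\tilde S\big\rangle,
\]
i.e.\ the full divergence form rather than just your $T$. Then $\EE\langle A,B\rangle$ is \emph{exactly} the trace integral by a single integration by parts (no residual), and $\EE\fro{B}^2$ splits as $\calJ(p)+\sum_j(\text{conditional Fisher term})$ because the $\Delta\psi$--score cross terms vanish (the conditional scores have mean zero, and $\langle\nabla\psi_{k_1k_2},\nabla\log p(\bTheta)\rangle=0$ by homogeneity). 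This is where $\calJ(p)$ actually enters --- as the squared Laplacian piece of $\fro{B}^2$, not as a transformed prior Fisher information. Finally, the paper applies Cauchy--Schwarz \emph{conditionally} on $\{\hat U_j\}$ and then uses Jensen to pass the outer expectation into the denominator; this conditioning is what makes $\hat U\hat U^\top$ genuinely $\bTheta$-free in the integration by parts.
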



It suffices to control the three terms involved in the right hand side of Lemma \ref{lem:vantree}. 
We will show (see Appendix \ref{sec:linalg:gradient} for more details) $\nabla\psi(\bTheta): \RR^{p\times r} \rightarrow \RR^{p\times p}$ is the following linear map:
\begin{align}\label{nablapsi}
	\nabla\psi(\bTheta)(Y)= \bar\bTheta_{\perp}\bar\bTheta_{\perp}^\top YW^{-1/2}\bar\bTheta^\top + \bar\bTheta W^{-1/2}Y^\top\bar\bTheta_{\perp}\bar\bTheta_{\perp}^\top.
\end{align}
Meanwhile (see Appendix \ref{sec:linalg:adjoint}), $\nabla\psi(\bTheta)^*:\RR^{p\times p}\rightarrow \RR^{p\times r}$ is given by
\begin{align}\label{nablapsiadj}
	\nabla\psi(\bTheta)^*(M) = \bar\bTheta_{\perp}\bar\bTheta_{\perp}^\top(M+M^\top)\bar\bTheta W^{-1/2}. 
\end{align}

\paragraph{Lower bound for $\int \tr\big(\nabla\psi(\bTheta)\circ\nabla\psi(\bTheta)^*\big) p(\bTheta) d\bTheta$ }
Based on \eqref{nablapsi} and \eqref{nablapsiadj}, we have for all $ M\in\RR^{p\times p}$ that
\begin{align*}
	&\quad\nabla\psi(\bTheta)\circ\nabla\psi(\bTheta)^*(M) \\
	&= \bar\bTheta_{\perp}\bar\bTheta_{\perp}^\top(M+M^\top)\bar\bTheta W^{-1}\bar\bTheta^\top +  \bar\bTheta W^{-1}\bar\bTheta^\top(M+M^\top) \bar\bTheta_{\perp}\bar\bTheta_{\perp}^\top.
\end{align*}
So, by definition, we get
\begin{align*}
	&\quad\tr\big(\nabla\psi(\bTheta)\circ\nabla\psi(\bTheta)^*\big) \\
	&= \sum_{i,j\in[p]} e_i^\top \bigg(\bar\bTheta_{\perp}\bar\bTheta_{\perp}^\top(e_ie_j^\top+e_je_i^\top)\bar\bTheta W^{-1}\bar\bTheta^\top +  \bar\bTheta W^{-1}\bar\bTheta^\top(e_ie_j^\top+e_je_i^\top) \bar\bTheta_{\perp}\bar\bTheta_{\perp}^\top\bigg) e_j\\
	&= 2\tr(\bar\bTheta_{\perp}\bar\bTheta_{\perp}^\top)\cdot\tr(\bar\bTheta W^{-1}\bar\bTheta^\top) + 2\inp{\bar\bTheta W^{-1}\bar\bTheta^\top}{\bar\bTheta_{\perp}\bar\bTheta_{\perp}}\\
	&= 2(p-r)\tr(W^{-1}). 
\end{align*}
Therefore, 
\begin{align*}
	\int \tr\big(\nabla\psi(\bTheta)\circ\nabla\psi(\bTheta)^*\big) p(\bTheta) d\bTheta = 2(p-r)\EE \tr(W^{-1}),
\end{align*}
where $W\sim W_r(I_r,p)$ follows the Wishart distribution. 
Following the Theorem 3.1 of \cite{von1988moments}, we have $\EE W^{-1} = (p-r-1)^{-1}I_r$ if $p-r-1\geq 1$. So we have 
\begin{align*}
	\int \tr\big(\nabla\psi(\bTheta)\circ\nabla\psi(\bTheta)^*\big) p(\bTheta) d\bTheta = \frac{2(p-r)r}{p-r-1}\geq 2r.
\end{align*}

\vspace{0.2cm}

\paragraph{Upper bound for $\calJ(p)$ }
Simple calculations show, for all $ k_1,k_2\in[p]$, that 
\begin{align*}
	\nabla\psi_{k_1k_2}(\bTheta) &= \bar\bTheta_{\perp}\bar\bTheta_{\perp}^\top(e_{k_1}e_{k_2}^\top + e_{k_2}e_{k_1}^\top)\bar\bTheta W^{-1/2},\\
	\Delta\psi_{k_1k_2} &= 2[\bar\bTheta_{\perp}\bar\bTheta_{\perp}^\top]_{k_1k_2}\cdot\tr(W^{-1}) - 2(p-r) [\bar\bTheta W^{-1}\bar\bTheta^\top]_{k_1k_2}. 
\end{align*}
Since $\nabla p(\bTheta) = (2\pi)^{-pr/2}\exp(-\fro{\bTheta}^2/2)(-\bTheta)$, we have $\inp{\nabla p}{\nabla \psi_{k_1k_2}} = 0$ for all $ k_1,k_2\in[p]$. As a result, 
\begin{align*}
	\int\sum_{k_1,k_2\in[p]}(\Delta\psi_{k_1k_2})^2 p(\bTheta)d\bTheta &= \int \bigg(4(p-r)^2 \fro{W^{-1}}^2 + 4(p-r) \big(\tr(W^{-1})\big)^2\bigg) p(\bTheta)d\bTheta\\
	&= 4(p-r)^2\EE\fro{W^{-1}}^2 + 4(p-r)\EE\big(\tr(W^{-1})\big)^2\\
	&\leq 4(p-r)^2\EE\fro{W^{-1}}^2 + 4(p-r)r\EE\fro{W^{-1}}^2 \\
	&\leq 8(p-r)^2\EE\fro{W^{-1}}^2
\end{align*}
as long as $p\geq 2r$. 

Following the Corollary 3.1 of \cite{von1988moments}, we have \footnote{There appears to be a typo in Corollary 3.1 (i), where the coefficient of the second term on the right hand side should be $c_2$ instead of $c_1$. }
\begin{align*}
	\EE W^{-2} &= (c_1 +c_2+ c_2 r ) I_r,
\end{align*}
where $c_1 = (p-r-2)c_2$ and $c_2 = [(p-r)(p-r-1)(p-r-3)]^{-1}$. As a result, 
\begin{align}
	\int\sum_{k_1,k_2\in[p]}(\Delta\psi_{k_1k_2})^2 p(\bTheta)d\bTheta \lesssim r. 
\end{align}

\paragraph{Upper bound for $\EE\int \tr\big(\nabla\psi(\bTheta)^*\circ \calI(\bTheta|\hat U_j) \circ  \nabla\psi(\bTheta)\big)\cdot  p(\bTheta)d\bTheta$ }
In fact, for all $ j\in[m]$, we have 
\begin{align}\label{eq:trpsispsi}
	&\quad\EE\int \tr\big(\nabla\psi(\bTheta)^*\circ \calI(\bTheta|\hat U_j) \circ  \nabla\psi(\bTheta)\big)\notag\\
	&= \EE\big[ \EE \fro{\nabla\psi(\bTheta)(\nabla\log p(\calD_j;\bTheta|\hat U_j))}^2\big]\notag\\
	&= \EE\big[ \sum_{i=1}^{n_j}\underbrace{\EE \inp{\nabla\psi(\bTheta)(\nabla\log p(\calD_j;\bTheta|\hat U_j))}{\nabla\psi(\bTheta)(\nabla\log p(X^{(j)}_i;\bTheta|\hat U_j))}}_{=: G_i^{(j)}}\big].
\end{align}
Meanwhile, for all $ j\in[m], i\in[n_j]$, we also define
\begin{align*}
	\tilde G_{i}^{(j)} := \EE \inp{\nabla\psi(\bTheta)(\nabla\log p(\calD_j;\bTheta|\hat U_j))}{\nabla\psi(\bTheta)(\nabla\log p(\tilde X^{(j)}_i;\bTheta|\hat U_j))},
\end{align*}
where $\tilde X_i^{(j)}$ is an i.i.d. copy of $X_i^{(j)}$. Note that the expectation is taken conditional on $\hat U_j$,  implying that $\EE \tilde G_i^{(j)} = 0$. 

Denote $(G_i^{(j)})^+: = 0\vee G_i^{(j)}$ and $(G_i^{(j)})^- := - 0 \wedge G_i^{(j)}$. By slightly abuse of notation, we denote $X^{(j)}:=\big[X^{(j)}_1,\cdots, X^{(j)}_{n_j}\big]$ and $p_{X^{(j)}}$ the corresponding density function. 
Since $\hat U_j$ is $(\epsilon_j,\delta_j)$-DP, by definition, we have 
\begin{align*}
	&\quad\PP\bigg((G_i^{(j)})^+\geq t\bigg) =\int \int \PP\bigg((G_i^{(j)})^+\geq t \bigg| X^{(j)}=x^{(j)} \bigg)p_{X^{(j)}}(x^{(j)})p_{\tilde X_i^{(j)}}(\tilde x_i^{(j)}) dx^{(j)} d\tilde x_i^{(j)}\\
	&\leq \int \int \bigg(e^{\epsilon_j}\PP\big((\tilde G_i^{(j)})^+\geq t \bigg| X^{(j)}=x^{(j)},\tilde X_i^{(j)}=\tilde x_i^{(j)} \big)+ \delta_j\bigg)p_{X^{(j)}}(x^{(j)})p_{\tilde X_i^{(j)}}(\tilde x_i^{(j)}) dx^{(j)} d\tilde x_i^{(j)}\\
	&=e^{\epsilon_j}\PP\bigg((\tilde G_i^{(j)})^+\geq t\bigg) + \delta_j. 
\end{align*}
Therefore, for an arbitrary $\tau>0$ to be determined later, we have  
\begin{align*}
	\int_0^{+\infty} \PP\bigg((G_i^{(j)})^+\geq t\bigg)dt&= \int_0^{\tau} \PP\bigg((G_i^{(j)})^+\geq t\bigg)dt + \int_{\tau}^{+\infty}\PP\bigg((G_i^{(j)})^+\geq t\bigg)dt \\
	&\leq e^{\epsilon_j}\int_0^{\tau} \PP\bigg((\tilde G_i^{(j)})^+\geq t\bigg)dt + \tau\delta_j +  \int_{\tau}^{+\infty}\PP\bigg((G_i^{(j)})^+\geq t\bigg)dt\\
	&\leq (1+C_1\eps_j)\int_0^{\tau} \PP\bigg((\tilde G_i^{(j)})^+\geq t\bigg)dt + \tau\delta_j +  \int_{\tau}^{+\infty}\PP\bigg((G_i^{(j)})^+\geq t\bigg)dt,
\end{align*}
where in the last inequality we used the fact that $\max_{j\in[m]}\eps_j=O(1)$.  And similarly we can show 
\begin{align*}
	\int_0^{+\infty} \PP\bigg((G_i^{(j)})^+\geq t\bigg)dt &\geq \int_0^{+\infty} \PP\bigg((\tilde G_i^{(j)})^-\geq t\bigg)dt - C_1\epsilon_j\int_0^{+\infty} \PP\bigg((\tilde G_i^{(j)})^-\geq t\bigg)dt\\
	&\quad- \tau\delta_j -\int_{\tau}^{+\infty} \PP\bigg((\tilde G_i^{(j)})^+\geq t\bigg)dt.
\end{align*}
Combine these two inequalities and we get 
\begin{align}\label{EEGij}
	\EE G_i^{(j)} \leq \EE\tilde G_i^{(j)} + 2C_1\epsilon_j \EE|\tilde G_i^{(j)}| + 2\tau \delta_j + \int_{\tau}^{+\infty} \PP\bigg((G_i^{(j)})^+\geq t\bigg)dt  + \int_{\tau}^{+\infty} \PP\bigg(( \tilde G_i^{(j)})^-\geq t\bigg)dt.
\end{align}
The first term in above right hand side vanishes. We now bound $\EE|\tilde G_i^{(j)}|$. By Cauchy-Schwarz inequality,  we get 
\begin{align*}
	&\quad\EE|\tilde G_i^{(j)}| \leq \sqrt{\EE|\tilde G_i^{(j)}|^2} \\
	& = \sqrt{\EE\big( \EE \inp{\nabla\psi(\bTheta)(\nabla\log p(\calD_j;\bTheta|\hat U_j))}{\nabla\psi(\bTheta)(\nabla\log p(\tilde X^{(j)}_i;\bTheta|\hat U_j))}\big)^2}\\
	&\leq \sqrt{\EE \big[\EE \fro{\nabla\psi(\bTheta)(\nabla\log p(\calD_j;\bTheta|\hat U_j))}^2\big]}\cdot \sqrt{\EE \big[\EE\fro{\nabla\psi(\bTheta)\big(\nabla\log p(\tilde X^{(j)}_i;\bTheta|\hat U_j)\big)}^2\big]}.
\end{align*}
Using the data processing inequality, we have 
\begin{align*}
	\EE \big[\EE\fro{\nabla\psi(\bTheta)\big(\nabla\log p(\tilde X^{(j)}_i;\bTheta|\hat U_j)\big)}^2\big] &\leq  \EE\fro{\nabla\psi(\bTheta)\big(\nabla\log p(\tilde X^{(j)}_i;\bTheta)\big)}^2.
\end{align*}
From \eqref{eq:Sji} and \eqref{nablapsi}, we obtain 
\begin{align*}
	\EE \big[\EE\fro{\nabla\psi(\bTheta)\big(\nabla\log p(\tilde X^{(j)}_i;\bTheta|\hat U_j)\big)}^2\big] \leq \frac{2\lambda^2}{(\lambda+\sigma^2)\sigma^2}\op{ W^{-2}}. 
\end{align*}
In summary, we have
\begin{align}\label{EEabsGij}
	\EE|\tilde G_i^{(j)}| \leq \sqrt{\frac{2\lambda^2}{(\lambda+\sigma^2)\sigma^2}\op{W^{-2}}} \cdot \sqrt{\EE \big[\EE \fro{\nabla\psi(\bTheta)(\nabla\log p(\calD_j;\bTheta|\hat U_j))}^2\big]}. 
\end{align}

It remains to bound the tail probabilities $ \PP\bigg((G_i^{(j)})^+\geq t\bigg)$ and $\PP\bigg(( \tilde G_i^{(j)})^-\geq t\bigg)$. Without loss of generality, we take $i = 1$. 
We shall first consider $\EE|G_1^{(j)}|^k$ for some large and absolute integer $k>0$.
Recall $G_1^{(j)} = \EE \inp{\nabla\psi(\bTheta)(\nabla\log p(\calD_j;\bTheta|\hat U_j))}{\nabla\psi(\bTheta)(\nabla\log p(X^{(j)}_1;\bTheta|\hat U_j))}$. By definition, we get 
\begin{align*}
	\EE|G_1^{(j)}|^k &= \EE\big|\EE \inp{\nabla\psi(\bTheta)(\nabla\log p(\calD_j;\bTheta|\hat U_j))}{\nabla\psi(\bTheta)(\nabla\log p(X^{(j)}_1;\bTheta|\hat U_j))}\big|^k\\
	&\leq \EE\big|\inp{\nabla\psi(\bTheta)(S_j)}{\nabla\psi(\bTheta)(S_{j,1})}\big|^k.
\end{align*}
where the inequality is due to Jenson's inequality and recall 
\begin{align*}
	S_j := \nabla\psi(\bTheta)(\nabla\log p(\calD_j;\bTheta|\hat U_j)),\\
	S_{j,1} := \nabla\psi(\bTheta)(\nabla\log p(X_1^{(j)};\bTheta|\hat U_j))
\end{align*}
Observe that 
$
\inp{\nabla\psi(\bTheta)\big(S_j\big)}{\nabla\psi(\bTheta)(S_{j,1})} = \sum_{i=1}^{n_j}\inp{\nabla\psi(\bTheta)\big(S_{j,i}\big)}{\nabla\psi(\bTheta)(S_{j,1})}. 
$
Therefore
\begin{align*}
	&\quad\EE|\inp{\nabla\psi(\bTheta)(S_j)}{\nabla\psi(\bTheta)(S_{j,1})}|^k \\
	&= \EE\bigg| \fro{\nabla\psi(\bTheta)(S_{j,1})}^2+ \Big<\sum_{i=2}^{n_j}\nabla\psi(\bTheta)\big(S_{j,i}\big), \nabla\psi(\bTheta)(S_{j,1})\Big>\bigg|^k\\
	&\leq 2^k\EE \fro{\nabla\psi(\bTheta)(S_{j,1})}^{2k} + 2^k\EE \bigg|\Big<\sum_{i=2}^{n_j}\nabla\psi(\bTheta)\big(S_{j,i}\big), \nabla\psi(\bTheta)(S_{j,1})\Big>\bigg|^k.
\end{align*}
Denote 
$$
Y^{(j)}_{2:n_j}:=\mat{Y_2^{(j)}, \cdots, Y_{n_j}^{(j)}}\quad {\rm and}\quad Z^{(j)}_{2:n_j}:=\mat{Z_2^{(j)}, \cdots, Z_{n_j}^{(j)}}.
$$ 
Then we can write
\begin{align*}
	\sum_{i=2}^{n_j}\nabla\psi(\bTheta)\big(S_{j,i}\big) &= \big((\lambda + \sigma^2)^{-1}- \sigma^{-2}\big)\bigg(\bar\bTheta_{\perp}\sum_{i=2}^{n_j}Z_i^{(j)}Y_i^{(j)\top}W^{-1}\bar\bTheta^\top + \bar\bTheta W^{-1}\sum_{i=2}^{n_j}Y_i^{(j)}Z_i^{(j)\top}\bar\bTheta_{\perp}^\top\bigg)\\
	&=\big((\lambda + \sigma^2)^{-1}- \sigma^{-2}\big)\Big(\bar\bTheta_{\perp}Z^{(j)}_{2:n_j}Y^{(j)\top}_{2:n_j}W^{-1}\bar\bTheta^\top + \bar\bTheta W^{-1}Y_{2:n_j}^{(j)}Z_{2:n_j}^{(j)\top}\bar\bTheta_{\perp}^\top\Big). 
\end{align*}
By the definitions in eq. \eqref{YZ}, we know that all entries of  $Y^{(j)}_{2:n_j}$ are i.i.d. obeying distribution $N(0,\lambda + \sigma^2)$. Similarly,  all entries of $Z^{(j)}_{2:n_j}$ are i.i.d. obeying $N(0,\sigma^2)$. Based on these facts, we get 
\begin{align*}
	\sum_{i=2}^{n_j}\inp{\nabla\psi(\bTheta)\big(S_{j,i}\big)}{\nabla\psi(\bTheta)(S_{j,1})} = 2\big((\lambda + \sigma^2)^{-1}- \sigma^{-2}\big)^2\Big<Z^{(j)}_{2:n_j}Y^{(j)\top}_{2:n_j}W^{-1}, Z_1^{(j)}Y_1^{(j)\top}W^{-1}\big>.
\end{align*}
By denoting $\mu := \big((\lambda + \sigma^2)^{-1}- \sigma^{-2}\big)^2(\lambda+\sigma^2)\sigma^2$, we can write
\begin{align*}
	\sum_{i=2}^{n_j}\inp{\nabla\psi(\bTheta)\big(S_{j,i}\big)}{\nabla\psi(\bTheta)(S_{j,1})} = 2\mu\inp{\bar Z^{(j)}_{2:n_j}\bar Y^{(j)\top}_{2:n_j} W^{-1}}{\bar Z_1^{(j)}\bar Y_1^{(j)\top} W^{-1}},
\end{align*}
where $\bar \cdot$ are the normalized version, i.e., the entries of $\bar Z^{(j)}$ and $\bar Y^{(j)}$ are i.i.d. standard normal random variables.

Using the tower rule, we get
\begin{align*}
	&\quad\EE|\inp{\bar Z^{(j)}_{2:n_j}\bar Y^{(j)\top}_{2:n_j} W^{-1}}{\bar Z_1^{(j)}\bar Y_1^{(j)\top} W^{-1}}|^k = \EE|\inp{\bar Z^{(j)}_{2:n_j}}{\bar Z_1^{(j)}\bar Y_1^{(j)\top}W^{-2}\bar Y^{(j)}_{2:n_j}}|^k\\
	&\leq k^{k/2}\cdot\EE\ltwo{\bar Z_1^{(j)}}^k\cdot\EE\ltwo{\bar Y^{(j)\top}_{2:n_j} W^{-2}\bar Y_1^{(j)}}^k\\
	&= k^{k/2}\cdot \prod_{i=0}^{(k/2)-1}(p-r+2i)\cdot\prod_{l=0}^{(k/2)-1}(n_j-1+2l)\cdot \EE \ltwo{W^{-2}\bar Y_1^{(j)}}^k  \\
	&\leq  C^k k^{k}\cdot \prod_{i=0}^{(k/2)-1}(p-r+2i)\cdot\prod_{l=0}^{(k/2)-1}(n_j-1+2l)\cdot  \fro{W^{-2}}^k,
\end{align*}
where, in the first and last inequalities, we used Lemma \ref{lemma:weighted-chi-square} to show that $\EE \ltwo{W^{-2}\bar Y_1^{(j)}}^k \leq (Ck^{1/2}\fro{W^{-2}})^k$. Here $C>0$ is an absolute constant. 

Similarly, we get
\begin{align*}
	\EE \fro{\nabla\psi(\bTheta)(S_{j,1})}^{2k} &=  (2\mu)^k\cdot \EE\fro{\bar Z_1^{(j)}\bar Y_1^{(j)\top}W^{-1}}^{2k}\\
	&= (2\mu)^k\cdot \EE\ltwo{\bar Z_1^{(j)}}^{2k}\cdot \EE\ltwo{W^{-1}\bar Y_1^{(j)}}^{2k}\\
	&\leq C^k\cdot \prod_{i=0}^{k-1}(p-r+2i)\cdot k^k\fro{W^{-1}}^{2k}. 
\end{align*}
In summary, we have
\begin{align*}
	\EE|G_1^{(j)}|^k &\leq \EE|\inp{\nabla\psi(\bTheta)(S_j)}{\nabla\psi(\bTheta)(S_{j,1})}|^k  \\
	&\leq C^kk^k\mu^k\bigg((p-r)^k\fro{W^{-1}}^{2k} + (p-r)^{k/2}n_j^{k/2}\fro{W^{-2}}^k\bigg).
\end{align*}
We can similarly show the upper bound for $\EE|\tilde G_i^{(j)}|^k$ as 
\begin{align*}
	\EE|\tilde G_i^{(j)}|^k\leq C^kk^k\mu^k\bigg((d-r)^k\fro{W^{-1}}^{2k} + (d-r)^{k/2}n_j^{k/2}\fro{W^{-2}}^k\bigg).
\end{align*}

By Markov's inequality, we get
\begin{align*}
	\PP\bigg((G_i^{(j)})^+\geq t\bigg) \leq \PP\bigg(|G_i^{(j)}| \geq t\bigg) = \PP\bigg(|G_i^{(j)}|^k \geq t^k\bigg) \leq \frac{\EE |G_i^{(j)}|^k}{t^k}. 
\end{align*}
Therefore
\begin{align*}
	\int_{\tau}^{+\infty} \PP\bigg((G_i^{(j)})^+\geq t\bigg)dt \leq \int_{\tau}^{+\infty} \frac{\EE |G_i^{(j)}|^k}{t^k}dt = \frac{1}{k-1}\tau^{-k+1}\EE |G_i^{(j)}|^k. 
\end{align*}

Observe that, by setting $\tau = (\delta_j^{-1}\EE|G_i^{(j)}|^k)^{1/k}$, we get 
\begin{align*}
	&\quad \tau\delta_j + \int_{\tau}^{+\infty} \PP\bigg((G_i^{(j)})^+\geq t\bigg)dt  + \int_{\tau}^{+\infty} \PP\bigg((\tilde G_i^{(j)})^-\geq t\bigg)dt \\
	&\leq \underbrace{\frac{1}{k-1}\delta_j\tau + \cdots + \frac{1}{k-1}\delta_j\tau}_{k-1 \text{terms}}+ \frac{2}{k-1}\tau^{-k+1}\EE |G_i^{(j)}|^k\\
	&\leq 2(\EE|G_i^{(j)}|^k)^{1/k}\delta_j^{\frac{k-1}{k}}\\
	&\leq Ck\frac{\lambda^2}{(\lambda+\sigma^2)\sigma^2}\big((d-r)\fro{W^{-1}}^2 + (d-r)^{1/2}n_j^{1/2}\fro{W^{-2}}\big)\delta_j^{\frac{k-1}{k}}.
\end{align*}
By plugging the above bound into \eqref{EEGij}, we get 
\begin{align*}
	\EE G_i^{(j)} &\leq 2C_1\epsilon_j \sqrt{\frac{2\lambda^2}{(\lambda+\sigma^2)\sigma^2}\op{W^{-2}} }\sqrt{\EE \big[\EE \fro{\nabla\psi(\bTheta)(\nabla\log p(\calD_j;\bTheta|\hat U_j))}^2\big]} \\
	&\quad+ Ck\frac{\lambda^2}{(\lambda+\sigma^2)\sigma^2}\big((d-r)\fro{W^{-1}}^2 + (d-r)^{1/2}n_j^{1/2}\fro{W^{-2}}\big)\delta_j^{\frac{k-1}{k}}.
\end{align*}
Together with \eqref{eq:trpsispsi}, we get 
\begin{align*}
	&\quad\EE\big[ \EE \fro{\nabla\psi(\bTheta)(\nabla\log p(\calD_j;\bTheta|\hat U_j))}^2\big]\\
	&\leq 2C_1n_j\epsilon_j \sqrt{\frac{2\lambda^2}{(\lambda+\sigma^2)\sigma^2}\op{W^{-2}} }\sqrt{\EE \big[\EE \fro{\nabla\psi(\bTheta)(\nabla\log p(\calD_j;\bTheta|\hat U_j))}^2\big]} \\
	&\quad+ Ckn_j\frac{\lambda^2}{(\lambda+\sigma^2)\sigma^2}\big((p-r)\fro{W^{-1}}^2 + (p-r)^{1/2}n_j^{1/2}\fro{W^{-2}}\big)\delta_j^{\frac{k-1}{k}}.
\end{align*}
Therefore, as long as 
\begin{align*}
	Ck\cdot n_j\frac{\lambda^2}{(\lambda+\sigma^2)\sigma^2}\big((p-r)r + (p-r)^{1/2}r^{1/2}n_j^{1/2}\big)\delta_j^{\frac{k-1}{k}} \leq C_1^2n_j^2\epsilon_j^2\frac{\lambda^2}{(\lambda+\sigma^2)\sigma^2},
\end{align*}
we have
\begin{align*}
	\EE\big[ \EE \fro{\nabla\psi(\bTheta)(\nabla\log p(\calD_j;\bTheta|\hat U_j))}^2\big] \leq C_1^2n_j^2\epsilon_j^2\frac{\lambda^2}{(\lambda+\sigma^2)\sigma^2} \op{W^{-2}}. 
\end{align*}
As a result, we get
\begin{align}\label{upperbound:trhatC}
	\EE\int \tr\big(\nabla\psi(\bTheta)^*\circ \calI(\bTheta|\hat U_j) \circ  \nabla\psi(\bTheta)\big) \cdot p(\bTheta) d\bTheta\leq  C_1^2n_j^2\epsilon_j^2\frac{\lambda^2}{(\lambda+\sigma^2)\sigma^2}\cdot\EE\op{W^{-2}},
\end{align}
where recall that $W\sim W_r(I_r,p)$ follows the Wishart distribution. 

Using the data processing inequality,  we have another upper bound for $\EE \tr\big(\nabla\psi(\bTheta)^*\circ \calI(\bTheta|\hat\U_j) \circ  \nabla\psi(\bTheta)\big)$ as 
\begin{align*}
	\EE \tr\big(\nabla\psi(\bTheta)^*\circ \calI(\bTheta|\hat U_j) \circ  \nabla\psi(\bTheta)\big) \leq \tr\big(\nabla\psi(\bTheta)\circ\calC_j\circ\nabla\psi(\bTheta)^*\big),
\end{align*}
where $\calC_j$ is defined in \eqref{def:calCj}. 
From \eqref{eq:gcg} and $\calC_j = \sum_{i=1}^{n_j}\calC_{j,i}$, we see 
\begin{align*}
	\tr\big(\nabla\psi(\bTheta)\circ\calC_j\circ\nabla\psi(\bTheta)^*\big) = \frac{2n_j\lambda^2}{(\lambda+\sigma^2)\sigma^2}(p-r)\tr(W^{-2}).
\end{align*}
Therefore, 
\begin{align*}
	\int \tr \big(\nabla\psi(\bTheta)^*\circ \calI(\bTheta|\hat U_j)\circ  \nabla\psi(\bTheta)\big) \cdot p(\bTheta) d\bTheta\leq  \frac{4n_j\lambda^2}{(\lambda + \sigma^2)\sigma^2}\frac{r}{p-r}. 
\end{align*}
In summary, we have
\begin{align*}
	\int \tr\big(\nabla\psi(\bTheta)^*\circ \calI(\bTheta|\hat U_j)\circ  \nabla\psi(\bTheta)\big) \cdot p(\bTheta) d\bTheta\leq
	\min\bigg\{C_1^2n_j^2\epsilon_j^2\EE \op{W^{-2}}, \frac{4n_jr}{p-r} \bigg\}\cdot \frac{\lambda^2}{(\lambda+\sigma^2)\sigma^2}.
\end{align*}
Finally,  we plug these bounds into the right hand side of the inequality in Lemma \ref{lem:vantree},   we obtain (recall that we focus on the regime $\max_{j\in[m]}\epsilon_j =O(1)$)
\begin{align*}
	\int\fro{\hat U\hat U^\top - \psi(\bTheta)}^2\cdot p(\bTheta) d\bTheta \gtrsim \frac{r^2}{\sum_{i=1}^m\min\big\{n_j^2\epsilon_j^2\cdot \EE \op{W^{-2}}, \frac{n_jr}{p-r} \big\}\cdot \frac{\lambda^2}{(\lambda+\sigma^2)\sigma^2}+r}
\end{align*}
Finally we bound $\EE\op{W^{-2}}$. 
Denote the event $\calF_0 := \{\op{W - pI_r} \leq p/2\}$. 
From the basic concentration inequality of sample covariance matrix \citep{koltchinskii2017concentration},  we have $\PP(\calF_0)\geq 1-e^{-c_1p}$. 
Under $\calF_0$, we have $\lambda_{\min}(W)\geq p/2$. 
So we have 
\begin{align*}
	\EE\op{W^{-2}} &= \EE\op{W^{-2}}\cdot\mathds{1}(\calF_0) + \EE\op{W^{-2}}\cdot\mathds{1}(\calF_0^c)\\ 
	&\leq 4p^{-2}  + (\EE\op{W^{-2}}^2)^{1/2}\cdot e^{-c_1p/2}\\
	&\leq 4p^{-2}  + (\EE\fro{W^{-2}}^2)^{1/2}\cdot e^{-c_1p/2}\\
	&= 4p^{-2}  + (\EE\tr(W^{-4}))^{1/2}\cdot e^{-c_1p/2}. 
\end{align*}
The term $\EE\tr(W^{-4})$ can be computed using the Theorem 4.1 of \cite{von1988moments}, which implies $\EE\tr(W^{-4})\cdot e^{-c_1p/2}\leq p^{-2}$.

\hspace{1cm}

\noindent\textbf{Lower bound for covariance matrix estimation. }
We consider a subset $\Theta_1$ of $\Theta(\lambda,\sigma^2)$:
\begin{align*}
	\Theta_1 = \bigg\{\bSigma = \lambda\U\U^\top + \sigma^2\I: \U\in\OO_{p,r}\bigg\}
\end{align*}
In this set, both $\lambda$ and $\sigma^2$ are known to us, and it boils down to estimating $\U$. 
Therefore
\begin{align}\label{lowerbound1}
	\inf_{\hat\bSigma}\sup_{\bSigma\in\Theta_1}\EE\fro{\hat\bSigma - \bSigma}^2 &=  \inf_{\hat\U \in \calM(\boldsymbol{\epsilon}, \boldsymbol{\delta})}\sup_{\bSigma\in\Theta(\lambda,\sigma^2)}\lambda^2\cdot\EE\fro{\hat\U\hat\U^\top - \U\U^{\top}}^2\notag\\
	&\geq \frac{c_0pr}{\sum_{i=1}^m \Big(n_j\wedge(n_j^2\epsilon_j^2\cdot p^{-1}r^{-1})\Big)} (\lambda\sigma^2 + \sigma^4)\bigwedge (r\lambda^2). 
\end{align}

Now if $\lambda/\sigma^2\geq 1$, then in addition to \eqref{lowerbound1}, we consider another set
\begin{align*}
	\Theta_2 = \bigg\{\bSigma = \mat{(\lambda+\sigma^2)\V\V^\top + (\lambda + \sigma^2)\I_r & 0\\ 0&\sigma^2\I_{p-r}}: \V\in\OO_{r,\frac{r}{2}}\bigg\}.
\end{align*}
For any $\bSigma\in\Theta_2$, it admits the following decomposition:
\begin{align*}
	\bSigma = \mat{\mat{\V&\V_{\perp}}\\ 0}\diag(\underbrace{2\lambda+\sigma^2, \cdots, 2\lambda+\sigma^2}_{\frac{r}{2}\text{~times}},\lambda, \cdots, \lambda)\mat{\mat{\V&\V_{\perp}}\\ 0}^\top + \sigma^2\I,
\end{align*}
where $\V_{\perp}\in\OO_{r,\frac{r}{2}}$ is the orthogonal complement of $\V$. Since $\lambda/\sigma^2\geq 1$, we can conclude $\Theta_2\subset \Theta(\lambda,\sigma^2)$. 
Now the original problem reduces to a smaller one. Define 
\begin{align*}
	\tilde\Theta(\lambda,\sigma^2) =  \bigg\{\bSigma = &\V\bLa\V^\top + \sigma^2\I: \\
	&\V\in\OO_{r,\frac{r}{2}}, \bLa = \diag(\lambda_1,\cdots, \lambda_r), c_0\lambda\leq \lambda_r\leq\cdots\leq C_0\lambda\bigg\}.
\end{align*}
Then from \eqref{lowerbound1}, we have
\begin{align*}
	\inf_{\tilde\bSigma}\sup_{\bSigma\in\tilde\Theta(\lambda+\sigma^2,\lambda+\sigma^2)}\EE\fro{\tilde\bSigma - \bSigma}^2 &\geq \left(\frac{c_0r^2}{\sum_{i=1}^m \Big(n_j\wedge(n_j^2\epsilon_j^2\cdot r^{-2})\Big)} (\lambda + \sigma^2)^2\right)\bigwedge \big(r(\lambda+\sigma)^2\big)\\
	&\geq \left(\frac{c_0r^2\lambda^2}{\sum_{i=1}^m \Big(n_j\wedge(n_j^2\epsilon_j^2\cdot r^{-2})\Big)} \right)\bigwedge (r\lambda^2)
\end{align*}
Note that the estimation of $\V$ in $\tilde\Theta_2$ is a sub-problem of estimating $ \mat{\mat{\V&\V_{\perp}}\\ 0}$ in $\Theta_2$, we have
\begin{align*}
	\inf_{\hat\bSigma}\sup_{\bSigma\in\Theta_2}\EE\fro{\hat\bSigma - \bSigma}^2  &\geq \inf_{\tilde\bSigma}\sup_{\bSigma\in\tilde\Theta(\lambda+\sigma^2,\lambda+\sigma^2)}\EE\fro{\tilde\bSigma - \bSigma}^2\\
	&\geq \left(\frac{c_0r^2\lambda^2}{\sum_{i=1}^m \Big(n_j\wedge(n_j^2\epsilon_j^2\cdot r^{-2})\Big)} \right)\bigwedge (r\lambda^2).
\end{align*}
Together with the bound in \eqref{lowerbound1}, we conclude when $\lambda/\sigma^2\geq 1$, 
\begin{align*}
	&\quad\inf_{\hat\bSigma}\sup_{\bSigma\in\Theta(\lambda,\sigma^2)}\EE\fro{\hat\bSigma - \bSigma}^2 \geq 	\inf_{\hat\bSigma}\sup_{\bSigma\in\Theta_1\cup\Theta_2}\EE\fro{\hat\bSigma - \bSigma}^2\\
	&\geq \bigg(\bigg(\frac{c_0pr}{\sum_{i=1}^m \Big(n_j\wedge(n_j^2\epsilon_j^2\cdot d^{-1}r^{-1})\Big)} (\lambda\sigma^2 + \sigma^4)\bigg)\bigwedge (r\lambda^2)\bigg) \bigvee \bigg(\bigg(\frac{c_0r^2\lambda^2}{\sum_{i=1}^m \Big(n_j\wedge(n_j^2\epsilon_j^2\cdot r^{-2})\Big)}\bigg) \bigwedge (r\lambda^2)\bigg).
\end{align*}
Notice when $\lambda/\sigma^2\leq 1$, 
\begin{align*}
	\left(\frac{c_0dr}{\sum_{i=1}^m \Big(n_j\wedge(n_j^2\epsilon_j^2\cdot p^{-1}r^{-1})\Big)} (\lambda\sigma^2 + \sigma^4) \right)\bigwedge (r\lambda^2) \geq  \left(\frac{c_0r^2\lambda^2}{\sum_{i=1}^m \Big(n_j\wedge(n_j^2\epsilon_j^2\cdot r^{-2})\Big)}\right) \bigwedge (r\lambda^2).
\end{align*}
Therefore we conclude for any $\lambda, \sigma^2$ satisfy the condition in the theorem, 
\begin{align*}
	&\quad\inf_{\hat\bSigma}\sup_{\bSigma\in\Theta(\lambda,\sigma^2)}\EE\fro{\hat\bSigma - \bSigma}^2\\
	&\geq \bigg(\bigg(\frac{c_0pr}{\sum_{i=1}^m \Big(n_j\wedge(n_j^2\epsilon_j^2\cdot p^{-1}r^{-1})\Big)} (\lambda\sigma^2 + \sigma^4)\bigg)\bigwedge (r\lambda^2)\bigg) \bigvee \bigg(\bigg(\frac{c_0r^2\lambda^2}{\sum_{i=1}^m \Big(n_j\wedge(n_j^2\epsilon_j^2\cdot r^{-2})\Big)}\bigg) \bigwedge (r\lambda^2)\bigg).
\end{align*}
This finishes the proof of Theorem~\ref{thm:lower-bound}.

\subsection{Proof of Lemma \ref{lem:vantree}}

	We use $x = \{X^{(j)}_i, i = 1,\cdots, n_j\}_{j=1}^m$ to represent the collection of all data, $X^{(j)} = \{X^{(j)}_i, i = 1,\cdots, n_j\}$, and $\bTheta$ to be the parameter. 
	Condition on $\{\hat\U_j\}_{j=1}^m$, we define the random matrices
	\begin{align*}
		\A &= \hat\U\hat\U^\top - \psi(\bTheta), \\
		\B_{ij}  &= \sum_{(k,l)\in[p]\times[r]} \frac{\partial}{\partial \bTheta_{kl}}\big([\nabla \psi_{ij}(\bTheta)]_{kl} \cdot p(x,\bTheta|\{\hat\U_j\}_{j=1}^m) \cdot p(\bTheta)\big)\frac{1}{p(x,\bTheta|\{\hat\U_j\}_{j=1}^m) p(\bTheta)},
	\end{align*}
	where $p(x,\bTheta|\{\hat\U_j\})$ is the conditional density with parameter $\bTheta$ and we have $$p(x,\bTheta|\{\hat\U_j\}) = \prod_{j=1}^m p(X^{(j)},\bTheta|\hat\U_j).$$
	Now we define the conditional expectation $\EE [\inp{\A}{\B}|\{\hat\U_j\}] = \int \int \inp{\A}{\B}p(x,\bTheta|\{\hat\U_j\})p(\bTheta) d\bTheta dx$. Then using Cauchy-Schwarz inequality, we see 
	\begin{align}\label{CS:conditional}
		\EE[\fro{\A}^2|\{\hat\U_j\}] \geq \frac{(\EE [\inp{\A}{\B}|\{\hat\U_j\}])^2}{\EE[\fro{\B}^2|\{\hat\U_j\}]}. 
	\end{align}
	Simple calculation shows 
	\begin{align*}
		\EE[\inp{\A}{\B}|\{\hat\U_j\}] &= \int\int \sum_{ij} [\hat\U\hat\U^\top - \psi(\bTheta)]_{ij} \cdot \sum_{k,l} \frac{\partial}{\partial \bTheta_{kl}}\big([\nabla \psi_{ij}(\bTheta)]_{kl} p(x,\bTheta|\{\hat\U_j\}) p(\bTheta)\big)d\bTheta dx\\
		&= \int\int\sum_{ij,kl}[\nabla \psi_{ij}(\bTheta)]_{kl}^2\cdot p(\bTheta)d\bTheta dx\\
		&= \int \tr\big(\nabla\psi(\bTheta)\circ \nabla\psi(\bTheta)^*\big) p(\bTheta) d\bTheta,
	\end{align*}
	where the second equality holds from integration by parts, $\int p(x,\bTheta|\{\hat\U_j\})dx= 1$ and $\sum_{ij,kl}[\nabla \psi_{ij}(\bTheta)]_{kl}^2 = \tr\big(\nabla\psi(\bTheta)\circ \nabla\psi(\bTheta)^*\big)$. Meanwhile,
	\begin{align*}
		\EE[\fro{\A}^2|\{\hat\U_j\}]=\int\fro{\hat\U\hat\U^\top - \psi(\bTheta)}^2\cdot p(\bTheta) d\bTheta.
	\end{align*}
	Notice the right hand side is still a function of $\{\hat\U_j\}$. 
	Next we consider the expectation $\EE[\fro{\B}^2|\{\hat\U_j\}]$:
	\begin{align*}
		\EE[\fro{\B}^2|\{\hat\U_j\}] &= \EE \sum_{ij}\bigg(\Delta\psi_{ij}(\bTheta) + \inp{\nabla\psi_{ij}(\bTheta)}{\nabla \log p(x,\bTheta|\{\hat\U_j\})  +\nabla\log p(\bTheta)}\bigg)^2.
	\end{align*}

	Since $\psi(\bTheta)$ is independent of $\{\hat\U_j\}$, 
	\begin{align*}
		\EE \sum_{ij}\Delta\psi_{ij}^2(\bTheta) = \int\sum_{ij}\Delta\psi_{ij}^2(\bTheta)p(\bTheta)d\bTheta. 
	\end{align*}
	Notice 
	\begin{align}\label{eq:scorefunction}
		\int \nabla \log p(X^{(j)},\bTheta|\hat\U_j) \cdot p(X^{(j)},\bTheta|\hat\U_j) dx = \nabla \big[\int p(X^{(j)},\bTheta|\hat\U_j) dx\big]= 0,
	\end{align}
	where the last equality is due to $\int p(X^{(j)},\bTheta|\hat\U_j) dx = 1$.
	Thus, $\EE\nabla\log p(x,\bTheta|\{\hat\U_j\})=0$. 
	Also notice $\inp{\nabla\psi_{ij}(\bTheta)}{\nabla\log p(\bTheta)} = 0$. Therefore 
	\begin{align*}
		&\quad\EE \sum_{ij}\inp{\nabla\psi_{ij}(\bTheta)}{\nabla \log p(x,\bTheta|\{\hat\U_j\})  +\nabla\log p(\bTheta)}^2 \\
		&=\EE \sum_{ij}\inp{\nabla\psi_{ij}(\bTheta)}{\nabla \log p(x,\bTheta|\{\hat\U_j\})}^2\\
		&= \EE\sum_{ij}\inp{\nabla\psi_{ij}(\bTheta)}{\sum_{k=1}^m\nabla \log p(X^{(j)},\bTheta|\hat\U_k)}^2\\
		&= \sum_{k=1}^m\EE\sum_{ij}\inp{\nabla\psi_{ij}(\bTheta)}{\nabla \log p(X^{(j)},\bTheta|\hat\U_k)}^2,
	\end{align*} 
	where in the last line the cross terms vanish due to \eqref{eq:scorefunction}. Recall $$\calI(\bTheta|\hat\U_j)=  \EE \big[\big(\nabla\log p(X^{(j)};\bTheta|\hat\U_j)\big)^*\circ\nabla\log p(X^{(j)};\bTheta|\hat\U_j)\big].$$
	Therefore
	\begin{align*}
		\EE\sum_{ij}\inp{\nabla\psi_{ij}(\bTheta)}{\nabla \log p(X^{(j)},\bTheta|\hat\U_k)}^2 = \int \tr\big(\nabla\psi(\bTheta)^*\circ \calI(\bTheta|\hat\U_k) \circ  \nabla\psi(\bTheta)\big)\cdot  p(\bTheta)d\bTheta
	\end{align*}
	Using \eqref{eq:scorefunction} again, we obtain 
	\begin{align*}
		\EE \sum_{ij}\Delta\psi_{ij}(\bTheta)\cdot\inp{\nabla\psi_{ij}(\bTheta)}{\nabla \log p(x,\bTheta|\{\hat\U_j\})} = 0. 
	\end{align*}
	So we conclude 
	\begin{align*}
				\EE[\fro{\B}^2|\{\hat\U_j\}] = \int\sum_{ij}\Delta\psi_{ij}^2(\bTheta) p(\bTheta) d\bTheta + \sum_{j=1}^m\int \tr\big(\nabla\psi(\bTheta)\circ \calI(\bTheta|\hat\U_j) \circ  \nabla\psi(\bTheta)^*\big)\cdot  p(\bTheta)d\bTheta. 
	\end{align*}
	
	Now taking expectation w.r.t. $\hat\U_j$ in \eqref{CS:conditional}, and using Jenson's inequality yield the desired result
	\begin{align*}
		\int\EE\fro{\hat\U\hat\U^\top - \psi(\bTheta)}^2\cdot p(\bTheta) d\bTheta &\geq \EE \frac{(\EE [\inp{\A}{\B}|\{\hat\U_j\}])^2}{\EE[\fro{\B}^2|\{\hat\U_j\}]} \\
		&= \EE \frac{\bigg(\int \tr\big(\nabla\psi(\bTheta)\circ \nabla\psi(\bTheta)^*\big) p(\bTheta) d\bTheta\bigg)^2}{\EE[\fro{\B}^2|\{\hat\U_j\}]} \\
		&\geq \frac{\bigg(\int \tr\big(\nabla\psi(\bTheta)\circ \nabla\psi(\bTheta)^*\big) p(\bTheta) d\bTheta\bigg)^2}{\EE\big[\EE[\fro{\B}^2|\{\hat\U_j\}]\big]}.
	\end{align*}

\newpage
\section{Technical Lemma}
\begin{lemma}\label{lemma:weighted-chi-square}
	Let $g_1,\cdots,g_m \overset{\text{i.i.d.}}{\sim} N(0,1)$, and $c_1,\cdots,c_m\geq 0$. Then for any integer $l\geq 0$, we have 
	\begin{align*}
		\EE (\sum_{i=1}^m c_ig_i^2)^l \leq (Cl\cdot\sum_{i=1}^mc_i)^l
	\end{align*}
	for some absolute constant $C>0$. 
\end{lemma}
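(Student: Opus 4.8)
The plan is to reduce to the normalized case $\sum_{i=1}^m c_i = 1$ by homogeneity and then win by pure convexity. First I would dispose of the trivial cases: for $l = 0$ both sides equal $1$, and if $\gamma := \sum_{i=1}^m c_i = 0$ then every $c_i = 0$ and both sides vanish; so assume $l \geq 1$ and $\gamma > 0$. Setting $\tilde c_i := c_i/\gamma$, the vector $(\tilde c_i)_{i=1}^m$ is a probability vector and, since $\sum_i c_i g_i^2 = \gamma \sum_i \tilde c_i g_i^2$, we get $\EE\big(\sum_i c_i g_i^2\big)^l = \gamma^l\,\EE\big(\sum_i \tilde c_i g_i^2\big)^l$. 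Hence it suffices to bound $\EE\big(\sum_i \tilde c_i g_i^2\big)^l$ when $(\tilde c_i)_i$ is a probability vector.

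Next I would apply Jensen's inequality to the convex function $\phi(x) = x^l$ on $[0,\infty)$, treating $\sum_i \tilde c_i g_i^2$ as $\phi$ evaluated at a convex combination of the points $g_i^2$ with weights $\tilde c_i$. This gives, pointwise,
\[
\Big(\sum_{i=1}^m \tilde c_i\, g_i^2\Big)^l \;\leq\; \sum_{i=1}^m \tilde c_i\, (g_i^2)^l \;=\; \sum_{i=1}^m \tilde c_i\, g_i^{2l}.
\]
Taking expectations, using the standard Gaussian moment identity $\EE g_i^{2l} = (2l-1)!!$ and $\sum_i \tilde c_i = 1$, yields $\EE\big(\sum_i \tilde c_i g_i^2\big)^l \leq (2l-1)!!$. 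Finally, $(2l-1)!! = \prod_{k=1}^{l}(2k-1) \leq (2l)^l$, so unwinding the normalization gives $\EE\big(\sum_i c_i g_i^2\big)^l \leq \gamma^l (2l)^l = \big(2l\sum_i c_i\big)^l$, i.e.\ the statement with $C = 2$.

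There is essentially no hard step here: the only points demanding any care are the homogeneity normalization (to license Jensen with a probability vector) and the degenerate cases $l=0$ or $\sum_i c_i = 0$. The main ``obstacle'' is really a temptation to over-engineer — one might instinctively reach for the moment-generating function $\EE e^{t\sum c_i g_i^2} = \prod_i (1-2tc_i)^{-1/2}$, bound it via $(1-x)^{-1/2}\leq e^{x}$ on $[0,1/2]$, and extract moments by the layer-cake formula with Markov's inequality; this also works (with a slightly worse absolute constant from $\int_0^\infty s^{l-1}e^{-s/4}\,ds = 4^l\Gamma(l)$ and $l!\leq l^l$) but is strictly more laborious than the convexity argument above, which I would present as the proof.
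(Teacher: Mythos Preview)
Your proof is correct and reaches the same intermediate bound $(\sum_i c_i)^l\,\EE g^{2l}$ as the paper, but by a slightly different and cleaner mechanism. The paper expands $(\sum_i c_i g_i^2)^l$ multinomially and bounds each cross-moment $\EE[g_{i_1}^2\cdots g_{i_l}^2]\leq \EE g^{2l}$, a step it leaves unjustified (it does hold, e.g.\ via $(2a-1)!!\,(2b-1)!!\leq (2(a+b)-1)!!$ applied inductively). Your normalization-plus-Jensen route sidesteps that issue entirely: the pointwise inequality $(\sum_i \tilde c_i g_i^2)^l\leq \sum_i \tilde c_i g_i^{2l}$ already delivers the bound after taking expectations. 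Both proofs then finish identically with $(2l-1)!!\leq (Cl)^l$; your argument is more self-contained and gives the explicit constant $C=2$.
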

\begin{proof}
	We show this by expanding $(\sum_{i=1}^m c_ig_i^2)^l$. In fact, we have
	\begin{align*}
		\EE(\sum_{i=1}^m c_ig_i^2)^l = \EE\sum_{i_1,\cdots,i_l=1}^{m} c_{i_1}\cdots c_{i_l} g_{i_1}^2\cdots g_{i_l}^2\\
		\leq (\sum_i c_i)^l\cdot \EE g^{2l}\leq (Cl)^{l}\cdot (\sum_i c_i)^l,
	\end{align*}
	where $g\sim N(0,1)$ and we use the moment bound for Gaussian in the last inequality. 
\end{proof}

\begin{lemma}[\cite{koltchinskii2017concentration}]\label{lemma:covariance}
	Let $X_1,\cdots,X_n$ be i.i.d. samples from $N(0,\bSigma)$, and $\hat\bSigma = \frac{1}{n}\sum_{i=1}^nX_iX_i^\top$. Then 
	\begin{align*}
		\EE\op{\hat\bSigma - \bSigma} \asymp \op{\bSigma}\bigg(\frac{\tilde r}{n} \vee \sqrt{\frac{\tilde r}{n}}\bigg),
	\end{align*}
	where $\tilde r = \frac{\tr(\bSigma)}{\op{\bSigma}}$ is the effective rank of $\bSigma$. 
	Moreover, there exists an absolute constant $C_1>0$, such that for all $t\geq 1$, with probability exceeding $1-e^{-t}$, 
	\begin{align*}
		\bigg|\op{\hat\bSigma - \bSigma}  -\EE\op{\hat\bSigma - \bSigma}\bigg| \leq C_1 \op{\bSigma} \bigg(\frac{t}{n} \vee \sqrt{\frac{t}{n}}\bigg). 
	\end{align*}
\end{lemma}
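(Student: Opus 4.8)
Since Lemma~\ref{lemma:covariance} is quoted from \cite{koltchinskii2017concentration}, the plan is to cite that reference; here I sketch the route one would take to prove it directly. First I would reduce to a Gaussian empirical process. Writing $X_i = \bSigma^{1/2}g_i$ with $g_i\stackrel{\rm i.i.d.}{\sim}N(0,I_p)$, one has $\hat\bSigma - \bSigma = \bSigma^{1/2}\big(\tfrac1n\sum_{i=1}^n g_ig_i^\top - I_p\big)\bSigma^{1/2}$, so by the variational formula for the operator norm
\begin{align*}
\op{\hat\bSigma - \bSigma} = \sup_{\ltwo{v}=1}\Big|\frac1n\sum_{i=1}^n \inp{\bSigma^{1/2}v}{g_i}^2 - \ltwo{\bSigma^{1/2}v}^2\Big|,
\end{align*}
which is the supremum of a centered process indexed by the ellipsoid $\mcV := \{\bSigma^{1/2}v:\ltwo{v}\le 1\}$; its increments $\frac1n\sum_i(\inp{u}{g_i}^2-\inp{u'}{g_i}^2)$ are sums of products of sub-Gaussians and hence have a mixed $\ell_2$/$\ell_1$ (sub-exponential) tail.

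For the expectation bound I would control $\EE\op{\hat\bSigma-\bSigma}$ by a generic-chaining / Bernstein-type estimate for sub-exponential processes (in the spirit of the Adamczak--Mendelson bounds): the chaining integral over $\mcV$ is governed by the metric entropy of an ellipsoid whose semi-axes are the eigenvalues of $\bSigma^{1/2}$, and this entropy computation is exactly what replaces the ambient dimension $p$ by the effective rank $\tilde r = \tr(\bSigma)/\op{\bSigma} = \fro{\bSigma^{1/2}}^2/\op{\bSigma}$, yielding a Gaussian term of order $\op{\bSigma}\sqrt{\tilde r/n}$ and a linear term of order $\op{\bSigma}\,\tilde r/n$. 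The matching lower bound would follow by restricting $v$ to the leading eigenvector of $\bSigma$, reducing to a centered sum of i.i.d. scaled $\chi^2_1$ variables, together with a standard anticoncentration argument in the $\tilde r/n$ regime.

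For the concentration statement I would localize on the event $\calA := \{\op{\hat\bSigma}\le 2\op{\bSigma}\}$, which holds with probability $\ge 1-e^{-cn}$ by the expectation bound and Markov's inequality, and note that on $\calA$ the gradient of $g\mapsto F(g):=\op{\hat\bSigma-\bSigma}$ (computed at the maximizing unit vector $v$, with $u:=\bSigma^{1/2}v$) has Euclidean norm $\frac2n\big(\ltwo{u}^2\sum_i\inp{u}{g_i}^2\big)^{1/2} = \frac2n\big(\ltwo{u}^2\cdot n\,v^\top\hat\bSigma v\big)^{1/2}\le \tfrac{2\sqrt2}{\sqrt n}\op{\bSigma}$, so $F$ is locally Lipschitz with constant $L\lesssim \op{\bSigma}/\sqrt n$. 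Applying the Gaussian concentration inequality to a truncated extension of $F$ and absorbing the complement of $\calA$ into the deviation gives $\PP(|F-\EE F|\ge s)\le \exp(-cns^2/\op{\bSigma}^2)$ for $s\lesssim\op{\bSigma}$ (the $\sqrt{t/n}$ regime); the heavier linear tail $t/n$ for $s\gtrsim\op{\bSigma}$ is inherited from the same sub-exponential increment bound used for the expectation, and taking $s=C_1\op{\bSigma}(t/n\vee\sqrt{t/n})$ finishes the proof.

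The main obstacle is the \emph{sharp} effective-rank dependence $\tilde r$ (rather than $p$) in the expectation bound; this is precisely the entropy/chaining estimate over the ellipsoid $\mcV$ and is the only delicate ingredient, the concentration step being comparatively routine once one restricts to $\calA$. Since the lemma is used in this paper purely as a black box (e.g. in the proofs of Theorems~\ref{thm:highprob:upperbound} and~\ref{thm:covariance:upperbound}), invoking \cite{koltchinskii2017concentration} is enough.
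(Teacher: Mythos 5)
The paper does not prove Lemma~\ref{lemma:covariance}; it is imported as a black box from \cite{koltchinskii2017concentration}, and your primary plan---cite the reference---is exactly what the paper does, so on that level the two approaches coincide. Your accompanying sketch is broadly consistent with the Koltchinskii--Lounici argument (whitening, chaining over the ellipsoid $\Sigma^{1/2}B_2^p$ to replace ambient dimension by effective rank, and Gaussian Lipschitz concentration for the deviation), though you swap in an Adamczak--Mendelson sub-exponential chaining bound where the original uses Gaussian concentration plus Sudakov minoration; one small slip worth noting is that Markov's inequality applied to the expectation bound only gives a constant, not an $e^{-cn}$, bound on $\PP(\|\hat\Sigma\|>2\|\Sigma\|)$---obtaining the exponential tail for that localizing event requires a separate crude concentration step (e.g.\ an $\varepsilon$-net plus Bernstein), not Markov alone.
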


\begin{lemma}\label{lemma:l2-norm-concentration}
	Let $X\in\RR^{d}$ be a sub-Gaussian random vector with $\EE X = 0$, and denote $\psitwo{X}$ its $\psi_2$ norm. Then we have for any $t>0$, 
	\begin{align*}
		\PP\bigg(\ltwo{X}\geq t\bigg) \leq 4^d \exp(-\frac{Ct^2}{\psitwo{X}^2})
	\end{align*}
	for some absolute constant $C>0$. 
\end{lemma}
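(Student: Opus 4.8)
The plan is to run the standard $\epsilon$-net (covering) argument on the unit sphere $S^{d-1}\subset\RR^d$, exploiting that $\ltwo{X}=\sup_{u\in S^{d-1}}\inp{X}{u}$ and that each one-dimensional marginal $\inp{X}{u}$ is a centered scalar sub-Gaussian variable with $\psitwo{\inp{X}{u}}\le\psitwo{X}$ — which is precisely the definition of the $\psi_2$-norm of the random vector $X$.

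First I would fix a parameter $\theta\in(0,1)$ and take a $\theta$-net $\mcN$ of $S^{d-1}$ in Euclidean distance; the usual volumetric estimate (a maximal $\theta$-separated set is a $\theta$-net, and disjoint balls of radius $\theta/2$ around its points sit inside the ball of radius $1+\theta/2$) gives $|\mcN|\le(1+2/\theta)^d$. Choosing $\theta$ so that $1+2/\theta\le 4$, e.g. $\theta=2/3$, yields $|\mcN|\le 4^d$, while the approximation lemma gives
\begin{align*}
\ltwo{X}=\sup_{u\in S^{d-1}}\inp{X}{u}\le\frac{1}{1-\theta}\max_{u\in\mcN}\inp{X}{u}=3\max_{u\in\mcN}\inp{X}{u}\le 3\max_{u\in\mcN}|\inp{X}{u}|.
\end{align*}
The constants $3$ and $4$ here are inessential; any $\theta$ bounded away from $1$ with $1+2/\theta\le4$ works.

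Next, for each fixed $u\in\mcN$ the scalar $\inp{X}{u}$ is centered sub-Gaussian with $\psitwo{\inp{X}{u}}\le\psitwo{X}$, so the scalar sub-Gaussian tail bound gives $\PP(|\inp{X}{u}|\ge s)\le 2\exp(-cs^2/\psitwo{X}^2)$ for an absolute constant $c>0$. A union bound over the at most $4^d$ points of $\mcN$, applied with $s=t/3$, then yields
\begin{align*}
\PP\big(\ltwo{X}\ge t\big)\le\PP\Big(\max_{u\in\mcN}|\inp{X}{u}|\ge t/3\Big)\le 4^d\cdot 2\exp\!\Big(-\frac{c\,t^2}{9\,\psitwo{X}^2}\Big).
\end{align*}
Finally I would absorb the harmless factor $2$ into the exponent: once $t^2/\psitwo{X}^2\ge c'$ this only rescales the absolute constant in front of $t^2$, and for smaller $t$ the asserted right-hand side $4^d\exp(-Ct^2/\psitwo{X}^2)$ already exceeds $1$ (as $4^d\ge2$) and the bound is vacuous. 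This gives the claim with, say, $C=c/18$.

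There is no genuine obstacle here; this is the textbook net argument for the norm of a sub-Gaussian vector. The only point requiring a little care is the bookkeeping of constants so that the covering number is honestly $\le 4^d$ — which forces the choice $\theta=2/3$ rather than the more common $\theta=1/2$ (the latter would give $5^d$) — and so that the leftover multiplicative constants are swallowed by the exponential factor.
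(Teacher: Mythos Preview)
Your proof is correct and is essentially the same $\epsilon$-net argument as the paper's. The only cosmetic difference is in the bookkeeping: the paper takes a $1/2$-cover and asserts its size is at most $4^d$, obtaining $\ltwo{X}\le 2\inp{X}{x_0}$ for the net point $x_0$ nearest to $X/\ltwo{X}$, whereas you take $\theta=2/3$ to make the $4^d$ bound on the covering number unambiguous via the volumetric estimate, at the price of a factor $3$ instead of $2$ in the approximation step; either way the constants are absorbed into $C$.
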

\begin{proof}
	Let $\{x_i\}_{i=1}^N$ be an $1/2$ cover of the unit sphere $\SS^{d-1}$, then $N\leq 4^d$. Notice $\ltwo{X} = \inp{X}{\frac{X}{\ltwo{X}}}$. Then there exists some $x_0\in\{x_i\}_{i=1}^N$, such that $\ltwo{x_0 - X/\ltwo{X}}\leq 1/2$. Now 
	\begin{align*}
		\ltwo{X} = \inp{X}{\frac{X}{\ltwo{X}}} =  \inp{X}{\frac{X}{\ltwo{X}} - x_0}  + \inp{X}{x_0}\leq \frac{1}{2}\ltwo{X} + \inp{X}{x_0}. 
	\end{align*}
	This implies $\ltwo{X}\leq 2\inp{X}{x_0}$. We conclude 
	\begin{align*}
		\PP(\ltwo{X}\geq t) \leq 4^d\cdot \PP(\inp{X}{x_0}\geq t/2) \leq 4^d\exp(-\frac{Ct^2}{\psitwo{X}^2}). 
	\end{align*}
\end{proof}


\begin{lemma}\label{lemma:mixtail:moments}
	Let $X$ be random variable such that 
	\begin{align*}
		\PP(|X|\geq \max\{a+bt,\sqrt{a+bt}\}) \leq e^{-t}
	\end{align*}
	 for some $1>a>0,b>0$ and all $t>0$, then we have 
	 \begin{align*}
	 	\EE |X|^2 &\leq a+b+2ab+2b^2,\\
	 	\EE |X|^4 &\leq a^2+2ab+2b^2 + 16a^3b + 96b^4. 
	 \end{align*}
\end{lemma}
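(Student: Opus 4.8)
The plan is to reduce everything to the layer-cake formula $\EE|X|^p = \int_0^\infty \PP(|X|^p \ge s)\,ds$ for $p=2$ and $p=4$, after first turning the mixed tail hypothesis into explicit pointwise tail bounds. The key preliminary observation is that, because $0<a<1$, the threshold $\max\{a+bt,\sqrt{a+bt}\}$ equals $\sqrt{a+bt}$ exactly when $a+bt\le 1$ (that is, $t\le (1-a)/b$) and equals $a+bt$ otherwise. Inverting this: writing $u$ for a level of $|X|$, the choice $t=(u^2-a)/b$ gives $\PP(|X|\ge u)\le e^{-(u^2-a)/b}$ for $\sqrt a\le u\le 1$ (the sub-Gaussian regime), the choice $t=(u-a)/b$ gives $\PP(|X|\ge u)\le e^{-(u-a)/b}$ for $u\ge 1$ (the sub-exponential tail), and $\PP(|X|\ge u)\le 1$ for $u\le\sqrt a$. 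Rephrased for $|X|^2$ this reads: $\PP(|X|^2\ge s)\le 1$ on $[0,a]$, $\le e^{-(s-a)/b}$ on $[a,1]$, and $\le e^{-(\sqrt s-a)/b}$ on $[1,\infty)$; and for $|X|^4$: $\PP(|X|^4\ge s)\le 1$ on $[0,a^2]$, $\le e^{-(\sqrt s-a)/b}$ on $[a^2,1]$, and $\le e^{-(s^{1/4}-a)/b}$ on $[1,\infty)$.

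For the second moment I would then add the three resulting integrals. The first is $\int_0^a 1\,ds=a$; the second is $\int_a^1 e^{-(s-a)/b}\,ds\le b$; the third, $\int_1^\infty e^{-(\sqrt s-a)/b}\,ds$, becomes after the substitutions $s=w^2$ and then $u=w-a$ the integral $\int_{1-a}^\infty(2u+2a)e^{-u/b}\,du\le 2\int_0^\infty u\,e^{-u/b}\,du+2a\int_0^\infty e^{-u/b}\,du=2b^2+2ab$, using $\int_0^\infty u^k e^{-u/b}\,du=k!\,b^{k+1}$. Summing gives $\EE|X|^2\le a+b+2ab+2b^2$, as claimed.

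The fourth-moment bound is the same scheme with more terms. The region $s\le 1$ contributes $a^2+\int_{a^2}^1 e^{-(\sqrt s-a)/b}\,ds$, and the substitutions $s=w^2$, $u=w-a$ bound the integral there by $2b^2+2ab$, so this region contributes at most $a^2+2ab+2b^2$ — exactly the first three terms of the claim. For $s\ge 1$, substituting $s=w^4$ and then $u=w-a$ turns $\int_1^\infty e^{-(s^{1/4}-a)/b}\,ds$ into $4\int_{1-a}^\infty (u+a)^3 e^{-u/b}\,du$; expanding $(u+a)^3=u^3+3au^2+3a^2u+a^3$ and applying the Gamma identity gives a bound $24b^4+24ab^3+12a^2b^2+4a^3b$, and a short case split on the ratio $b/a$ (using $0<a<1$) shows this is at most $96b^4+16a^3b$. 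Adding the two regions yields the stated bound for $\EE|X|^4$.

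There is no conceptual obstacle: the proof is a bookkeeping exercise. The points requiring care are (i) locating precisely where the tail switches between its sub-Gaussian and sub-exponential regimes — this is the only place $a<1$ is used; (ii) performing the $s\mapsto\sqrt s$ and $s\mapsto s^{1/4}$ substitutions correctly so that the sub-exponential tail integrals collapse to Gamma integrals; and (iii) organizing the cross terms in the fourth-moment estimate so the advertised coefficients $16$ and $96$ emerge. I expect step (iii) to be the one most likely to trip one up, since the naive bound $24b^4+24ab^3+12a^2b^2+4a^3b$ does not term-by-term dominate $96b^4+16a^3b$ and one must argue globally (e.g.\ via a one-variable inequality in $b/a$).
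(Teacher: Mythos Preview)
Your proposal is correct and follows essentially the same approach as the paper: both proofs split the layer-cake integral at the thresholds corresponding to $|X|=\sqrt a$ and $|X|=1$, bound the small-level region trivially, use the sub-Gaussian inversion $t=(u^2-a)/b$ on the middle region, and the sub-exponential inversion $t=(u-a)/b$ on the tail, reducing everything to Gamma integrals. The only cosmetic difference is that the paper integrates in the variable $s=|X|$ (with Jacobian $2s$ or $4s^3$) while you integrate in $s=|X|^p$; after the obvious substitution the computations coincide line by line, and you in fact supply more detail than the paper on why $24b^4+24ab^3+12a^2b^2+4a^3b\le 96b^4+16a^3b$.
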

\begin{proof}
We have 
	\begin{align*}
		\EE|X|^2 = \int_{0}^{+\infty}\PP(|X|^2\geq s) ds = \int_{0}^{+\infty}\PP(|X|\geq s) 2sds.
	\end{align*}
	We then decompose the integral into three parts:
	\begin{align}\label{integral:decomposition}
		\int_0^{\infty} = \int_0^{\sqrt{a}} + \int_{\sqrt{a}}^1 + \int_1^{\infty}.
	\end{align}
	For the first part, we have
	\begin{align*}
		\int_{0}^{\sqrt{a}}\PP(|X|\geq s) 2sds\leq a.
	\end{align*}
	For the second part, we have
	\begin{align*}
		 \int_{\sqrt{a}}^1 \PP(|X|\geq s) 2sds &= \int_{0}^{\frac{1-a}{b}} \PP(|X|\geq \sqrt{a+bt}) 2(a+bt)^{1/2}\frac{1}{2}(a+bt)^{-1/2}bdt\\
		 &\leq b\int_{0}^{\frac{1-a}{b}} e^{-t} dt \leq b.
	\end{align*}
	For the third part, we have
	\begin{align*}
		\int_1^{\infty}\PP(|X|\geq s) 2sds &= 	\int_{\frac{1-a}{b}}^{\infty}\PP(|X|\geq a+bt) 2(a+bt)bdt\\
		&\leq 2b\int_{\frac{1-a}{b}}^{\infty} e^{-t}(a+bt) dt\\
		&\leq 2ab + 2b^2
	\end{align*}

For the fourth order moment, we have similarly 
\begin{align*}
	\EE|X|^4 = \int_{0}^{+\infty}\PP(|X|^4\geq s) ds = \int_{0}^{+\infty}\PP(|X|\geq s) 4s^3ds.
\end{align*}
Using the decomposition as in \eqref{integral:decomposition}, we have for the first part, 
\begin{align*}
	\int_{0}^{\sqrt{a}}\PP(|X|\geq s) 4s^3ds\leq a^2.
\end{align*}
For the second part, we have
\begin{align*}
	\int_{\sqrt{a}}^1 \PP(|X|\geq s) 4s^3ds &= \int_{0}^{\frac{1-a}{b}} \PP(|X|\geq \sqrt{a+bt}) 4(a+bt)^{3/2}\frac{1}{2}(a+bt)^{-1/2}bdt\\
	&\leq  2b\int_{0}^{\frac{1-a}{b}} e^{-t} (a+bt)dt\\
	&\leq 2ab+2b^2. 
\end{align*}
For the third part, we have
\begin{align*}
	\int_1^{\infty}\PP(|X|\geq s) 4s^3ds &= 	\int_{\frac{1-a}{b}}^{\infty}\PP(|X|\geq a+bt) 4(a+bt)^3bdt\\
	&\leq 4b\int_{\frac{1-a}{b}}^{\infty} e^{-t}(a+bt)^3 dt\\
	&\leq 16a^3b +96b^4. 
\end{align*}
\end{proof}

\section{Some Linear Algebras}\label{sec:linalg}

\subsection{Derivation for $\nabla\psi(\Theta)$}\label{sec:linalg:gradient}

Let the map $\psi:\RR^{p\times r}\rightarrow\RR^{p\times p}$ be defined as $\psi(\Theta) = \Theta(\Theta^\top\Theta)^{-1}\Theta^\top$. Then the gradient of $\psi$ evaluated at $\Theta$ is a linear map: $\nabla\psi(\Theta): \RR^{p\times r}\rightarrow\RR^{p\times p}$. We set 
\begin{align*}
	\psi_1:&\RR^{p\times r}\rightarrow\RR^{p\times r} \text{~as~} \psi_1(\Theta) = \Theta, \\
	\psi_2:&\RR^{p\times r}\rightarrow\RR^{r\times r} \text{~as~} \psi_2(\Theta) = (\Theta^\top\Theta)^{-1},\\
	\psi_3:&\RR^{p\times r}\rightarrow\RR^{r\times p} \text{~as~} \psi_1(\Theta) = \Theta^\top.
\end{align*}
Then using product rule, we have for any $Y\in\RR^{p\times r}$, 
\begin{align*}
	\nabla\psi(\Theta)(Y) &= \nabla\psi_1(\Theta)(Y)\cdot \psi_2(\Theta)\cdot\psi_3(\Theta) +  \psi_1(\Theta)\cdot \nabla\psi_2(\Theta)(Y)\cdot\psi_3(\Theta) \\
	&\quad +  \psi_1(\Theta)\cdot \psi_2(\Theta)\cdot\nabla\psi_3(\Theta)(Y).
\end{align*}
Notice here $\nabla\psi_1(\Theta):\RR^{p\times r}\rightarrow\RR^{p\times r}$ is defined as $\nabla\psi_1(\Theta)(Y) = Y$ and $\nabla\psi_3(\Theta):\RR^{p\times r}\rightarrow\RR^{r\times p}$ is defined as $\nabla\psi_3(\Theta)(Y) = Y^\top$. 
Now we compute $\nabla\psi_2(\Theta)$. Following definition of gradient, $\nabla\psi_2(\Theta):\RR^{p\times r}\rightarrow\RR^{r\times r}$. We set $\psi_{2,1}(\Theta) = \Theta^\top\Theta$, and $\psi_{2,2}(M) = M^{-1}$.Then using product rule, 
\begin{align*}
	\nabla\psi_{2,1}(\Theta)(Y) = Y^\top\Theta + \Theta^\top Y.
\end{align*}
We also define $\psi_{2,3}(M) = M$.  $\nabla\psi_{2,2}(M):\RR^{r\times r}\rightarrow\RR^{r\times r}$ can be calculated using product rule. Notice for any $N\in\RR^{r\times r}$, 
\begin{align*}
	0 = \nabla(\psi_{2,2}\cdot \psi_{2,3})(M)(N) =  \nabla\psi_{2,2}(M)(N)\cdot M + M^{-1}\cdot N,
\end{align*}
which implies $ \nabla\psi_{2,2}(M)(N) = -M^{-1}NM^{-1}$. Notice $\psi_{2}(\Theta) = \psi_{2,2}\circ\psi_{2,1}(\Theta)$. Using chain rule, we have 
\begin{align*}
	\nabla\psi_{2}(\Theta)(Y) &=\nabla(\psi_{2,2}\circ\psi_{2,1})(\Theta)(Y) = \nabla\psi_{2,2}(\psi_{2,1}(\Theta))\big(\nabla\psi_{2,1}(\Theta)(Y)\big)\\
	&= \nabla\psi_{2,2}(\Theta^\top\Theta)\big( Y^\top\Theta + \Theta^\top Y\big)\\
	&=-(\Theta^\top\Theta)^{-1}(Y^\top\Theta + \Theta^\top Y)(\Theta^\top\Theta)^{-1}. 
\end{align*}
We have 
\begin{align*}
	\psi_1(\Theta)\cdot \nabla\psi_2(\Theta)(Y)\cdot\psi_3(\Theta) = -\Theta(\Theta^\top\Theta)^{-1}(Y^\top\Theta + \Theta^\top Y)(\Theta^\top\Theta)^{-1}\Theta^\top. 
\end{align*}
In summary, we have
\begin{align*}
	\nabla\psi(\Theta)(Y) &= Y(\Theta^\top\Theta)^{-1}\Theta^\top-\Theta(\Theta^\top\Theta)^{-1}(Y^\top\Theta + \Theta^\top Y)(\Theta^\top\Theta)^{-1}\Theta^\top + \Theta(\Theta^\top\Theta)^{-1}Y^\top\\
	&= \bar\Theta_{\perp}\bar\Theta_{\perp}^\top Y(\Theta^\top\Theta)^{-1}\Theta^\top + \Theta(\Theta^\top\Theta)^{-1}Y^\top\bar\Theta_{\perp}\bar\Theta_{\perp}^\top,
\end{align*}
where $\bar\Theta_{\perp}\bar\Theta_{\perp}^\top = I_p - \Theta(\Theta^\top\Theta)^{-1}\Theta^\top$.

\subsection{Derivation for $\nabla\psi(\Theta)^*$}\label{sec:linalg:adjoint}

Once we obtain the closed-form for $\nabla\psi(\Theta)$, we can compute its adjoint $\nabla\psi(\Theta)^*:\RR^{p\times p}\rightarrow\RR^{p\times r}$. For any $Y\in\RR^{p\times r}$, $M\in\RR^{p\times p}$, 
we have
\begin{align*}
	\inp{\nabla\psi(\Theta)^*(M)}{Y} &= \inp{M}{\nabla\psi(\Theta)(Y)} \\
	&= \inp{M}{\bar\Theta_{\perp}\bar\Theta_{\perp}^\top Y(\Theta^\top\Theta)^{-1}\Theta^\top + \Theta(\Theta^\top\Theta)^{-1}Y^\top\bar\Theta_{\perp}\bar\Theta_{\perp}^\top}\\
	&=  \inp{\bar\Theta_{\perp}\bar\Theta_{\perp}^\top (M+M^\top)\Theta(\Theta^\top\Theta)^{-1}}{Y}.
\end{align*}
So we conclude $\nabla\psi(\Theta)^*(M) = \bar\Theta_{\perp}\bar\Theta_{\perp}^\top (M+M^\top)\Theta(\Theta^\top\Theta)^{-1}$.

\end{document}